\newtheorem*{thmA}{Theorem A}
\newtheorem*{thmB}{Theorem B}
\newtheorem*{thmC}{Theorem C}
\newtheorem*{thmD}{Theorem D}
\newtheorem{theorem}{Theorem}
\newtheorem{lemma}[theorem]{Lemma}
\newtheorem{corollary}[theorem]{Corollary}
\newtheorem{proposition}[theorem]{Proposition}
\begin{document}

\title{On the reversal bias of the\\ Minimax
social choice correspondence\footnote{Daniela Bubboloni was partially supported by GNSAGA
of INdAM.}}
\author{\textbf{Daniela Bubboloni} \\
%EndAName
{\small {Dipartimento di Scienze per l'Economia e  l'Impresa} }\\
\vspace{-6mm}\\
{\small {Universit\`{a} degli Studi di Firenze} }\\
\vspace{-6mm}\\
{\small {via delle Pandette 9, 50127, Firenze, Italy}}\\
\vspace{-6mm}\\
{\small {e-mail: daniela.bubboloni@unifi.it}}\\
\vspace{-6mm}\\
{\small tel: +39 055 2759667} \and \textbf{Michele Gori}
 \\
%EndAName
{\small {Dipartimento di Scienze per l'Economia e  l'Impresa} }\\
\vspace{-6mm}\\
{\small {Universit\`{a} degli Studi di Firenze} }\\
\vspace{-6mm}\\
{\small {via delle Pandette 9, 50127, Firenze, Italy}}\\
\vspace{-6mm}\\
{\small {e-mail: michele.gori@unifi.it}}\\
\vspace{-6mm}\\
{\small tel: +39 055 2759707}}

\maketitle

\begin{abstract}
We introduce three different qualifications of the reversal bias
in the framework of social choice correspondences.
For each of them, we prove that the Minimax social choice correspondence is immune to it if and only if
the number of voters and the number of alternatives satisfy suitable arithmetical conditions.
We prove those facts thanks to a new characterization of the Minimax social choice correspondence and using a graph theory approach. We discuss the same issue for the Borda and Copeland social choice correspondences.
\end{abstract}

\vspace{4mm}

\noindent \textbf{Keywords:} reversal bias; Minimax social choice correspondence; directed graphs.

\vspace{2mm}

\noindent \textbf{JEL classification:} D71.

\noindent \textbf{MSC classification:} 05C20.

\section{Introduction}

Consider a committee having $h\ge 2$ members who have to select one or more elements within a set of $n\ge 2$ alternatives. Usually, the procedure used to make that choice only depends on committee members' preferences on alternatives. We assume that preferences of committee members are expressed as strict rankings (linear orders) on the set of alternatives, and call preference profile any list of $h$ preferences, each of them associated with one of the individuals in the committee. Thus, a procedure to choose, whatever individual preferences are, one or more alternatives as social outcome can be represented by a social choice correspondence ({\sc scc}), that is, a function from the set of preference profiles to the set of nonempty subsets of the set of alternatives.

The assessment of different {\sc scc}s and their comparison is usually based on which properties, among the ones considered desirable or undesirable under a social choice viewpoint, those {\sc scc}s fulfil.
Moving from the ideas originally proposed by Saari (1994) and then deepened by Saari and Barney (2003), we focus here on a quite unpleasant property that a {\sc scc} may meet and that, in our opinion, hasn't deserved the right attention yet.

In order to describe such a property, recall that the reversal of a preference profile is the preference profile obtained by it assuming a complete change in each committee member's mind about her own ranking of alternatives (that is, the best alternative gets the worst, the second best alternative gets the second worst, and so on).
Assume now that a given {\sc scc} associates with a certain preference profile a singleton, that is, it selects a unique alternative. If we next consider the outcome determined by the reversal of the considered preference profile, we would expect to have something different from the previous singleton as it seems natural to demand a certain degree of difference between the outcomes associated with a preference profile and its reversal.  As suggested by Saari and Barney (2003, p.17),
\begin{quote}
{\small suppose after the winner of an important
departmental election was announced, it was discovered
that everyone misunderstood the chair's instructions.
When ranking the three candidates, everyone listed his top,
middle, and bottom-ranked candidate in the natural order
first, second, and third. For reasons only the chair understood,
he expected the voters to vote in the opposite way.
As such, when tallying the ballots, he treated a first and
last listed candidate, respectively, as the voter's last and
first choice.
Imagine the outcry if after retallying the ballots the chair
reported that [...] the same person won. }
\end{quote}
In other words,
common sense suggests that we should
express doubts about the quality of a {\sc scc} which associates the same singleton both with a preference profile and with its reversal, that is, which suffers what we are going to call the reversal bias.

Among the classical {\sc scc}s, such a bias is experienced by the Minimax {\sc scc}\footnote{ Also known as Simpson-Kramer or Condorcet {\sc scc}. },  that is the {\sc scc} which selects those alternatives whose greatest pairwise defeat is minimum. Indeed, assume that a committee having six members ($h=6$) has to select some alternatives within a set of four alternatives denoted by 1, 2, 3 and 4 ($n=4$). Consider then a  preference profile represented by the matrix
\[
\left[
\begin{array}{cccccccccc}
1 & 1 & 1 & 2 & 3 & 4 \\
2 & 3 & 4 & 3 & 4 & 2 \\
3 & 4 & 2 & 4 & 2 & 3 \\
4 & 2 & 3 & 1 & 1 & 1
\end{array}
\right]
\]
where, for every $i\in\{1,2,3,4,5,6\}$, the $i$-th column represents the $i$-th member's preferences according to the rule that the higher the alternative is, the better it is.
A simple check shows that the Minimax {\sc scc} associates both with that preference profile and with its reversal the same set $\{1\}$.
On the other hand, if we consider two alternatives only, then  the Minimax {\sc scc} agrees with the simple majority and it is immediate to verify that it is immune to the reversal bias whatever the number of committee members is.

For such a reason, we address the problem of finding conditions on the number of individuals and on the number of alternatives that make the Minimax {\sc scc} immune to the reversal bias.
Our main result\footnote{Theorem A is a rephrase of Theorem \ref{main-new} for $j=1$.} is the following theorem.
\begin{thmA}
The Minimax {\sc scc} is immune to the reversal bias if and only if  $h\le 3$ or $n\le 3$ or
	$(h,n)\in\{(4,4), (5,4), (7,4), (5,5)\}$.
\end{thmA}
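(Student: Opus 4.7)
The plan is to translate the reversal-bias condition into a combinatorial condition on the majority-margin matrix. Let $m(x,y):=n_{xy}-n_{yx}$ denote the net pairwise margin, which is skew-symmetric in $(x,y)$, has the same parity as $h$, and lies in $[-h,h]$. Reversing every voter's ranking replaces $m$ by $-m$. The Minimax score of an alternative $x$ is $a(x):=\max_{y\neq x} m(y,x)$, the largest pairwise defeat suffered by $x$, and Minimax selects the alternatives minimizing $a$. In the reversed profile the corresponding score is $\max_{y\neq x}(-m(y,x))=-b(x)$, where $b(x):=\min_{y\neq x} m(y,x)$. Hence Minimax exhibits reversal bias precisely when some alternative $x_0$ simultaneously uniquely minimizes $a$ and uniquely maximizes $b$. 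The whole question reduces to asking, for each $(h,n)$, whether there is a skew-symmetric integer $n\times n$ matrix, all of whose entries have the parity of $h$ and lie in $[-h,h]$, realizable as the margin matrix of an $h$-profile and admitting such a distinguished row.

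For the ``only if'' direction we must construct, for every pair $(h,n)$ outside the exceptional list, a profile exhibiting reversal bias. The approach is to produce explicit base margin matrices with the required property for a few minimal $(h,n)$ pairs (the $(6,4)$ example in the introduction is one such witness), realize them as profiles by a Debord/McGarvey-style construction, and then extend to all remaining pairs by two bias-preserving operations: adding a pair of mutually reversed voters (which leaves every margin untouched and so covers all larger $h$ of the same parity) and appending an alternative ranked unanimously at the bottom (which hands the new alternative the maximal possible defeat \emph{and} the maximal possible loss, preserving the distinguished status of $x_0$). A short catalogue of base witnesses, one for each parity of $h$ at $n=4$ and one at $n=5$, should then cover the entire non-exceptional region.

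For the ``if'' direction one must rule out reversal bias for the small cases $h\leq 3$, $n\leq 3$, and $(h,n)\in\{(4,4),(5,4),(7,4),(5,5)\}$. When $n=2$ Minimax coincides with simple majority, and when $h\leq 3$ or $n=3$ the set of admissible margin matrices is small enough that a direct check disposes of the claim. The four remaining pairs are the genuinely delicate ones: here the plan is to suppose that $x_0$ uniquely minimizes $a$ and uniquely maximizes $b$, translate this into upper bounds on $a(x_0)$ and lower bounds on $b(x_0)$, and combine them with the skew-symmetry, parity and $[-h,h]$ constraints until a contradiction emerges. The new characterization of Minimax announced in the abstract, once used to reformulate the winner condition as a property of an auxiliary digraph extracted from the weighted tournament, should organize this case analysis efficiently and keep the number of sub-cases tractable.

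The main obstacle is precisely this exceptional-case analysis. Natural monotonicity arguments in $h$ and $n$ leave open a handful of ``accidental'' pairs where neither parity alone nor crude counting closes the gap, and the rigidity needed to exclude them must come from the finer structural description of Minimax-winning rows provided by the new characterization, together with a careful accounting of how small $h$ restricts the possible magnitudes of the margin-matrix entries.
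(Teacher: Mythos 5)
Your reduction to the margin matrix is sound: with linear orders, minimizing $\max_{y\neq x}|\{i:y>_{p_i}x\}|$ is equivalent to minimizing $a(x)=\max_{y\neq x}m(y,x)$, reversal negates $m$, and the bias condition is exactly that some $x_0$ uniquely minimizes $a$ and uniquely maximizes $b$. But there are concrete gaps in both directions. For the construction, your second extension operation fails: if you append an alternative $z$ ranked unanimously last, then in the reversed profile $z$ is ranked unanimously first, so $z$ is the Condorcet winner of $p^r$ and the \emph{unique} Minimax winner there, which destroys the distinguished status of $x_0$ rather than preserving it. The paper instead splits the special alternative, ranking it first for (roughly) half the voters and last for the other half, so that it is isolated in the relevant majority graph of both $p$ and $p^r$ while the remaining $n-1$ alternatives form a majority cycle; this is a direct construction for each admissible $(h,n)$, not an induction on $n$. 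Relatedly, your catalogue of base witnesses (one per parity at $n=4$ and $n=5$) plus the voter-pair operation only increases $h$ at fixed $n$, so pairs such as $(4,6)$ or $(5,6)$ are never reached; without a working ``add an alternative'' move you need fresh witnesses for every $n\geq 6$ at $h=4$ and $h=5$.

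For the immunity direction, the constraints you list (skew-symmetry, parity, entries in $[-h,h]$) are not enough to force a contradiction in the exceptional cases; the essential ingredient is a \emph{realizability} constraint on majority cycles, namely that a set of $l$ alternatives can form an $l$-cycle at majority threshold $\mu$ only if $\mu\leq\frac{l-1}{l}h$ (a Greenberg-type counting bound: each voter's linear order contributes at most $l-1$ agreeing arcs to any $l$-cycle). For instance, at $(5,4)$ this rules out any $3$-cycle at threshold $4$, and it is precisely this acyclicity, combined with connectedness of the resulting majority graph when the winner is unique, that excludes $(4,4),(5,4),(7,4),(5,5)$ in the paper. Also, your claim that $h\leq 3$ is handled by ``a direct check'' is not right as stated, since $n$ is unbounded there; the paper needs a genuine argument (for $h=3$: if the minimal threshold works for $p$ it cannot work for $p^r$, and at threshold $3=h$ the majority graph is always acyclic, so a unique reversed winner would force a forbidden cycle). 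Your outline is a reasonable map of the territory, but the two load-bearing ideas---the half-top/half-bottom placement of the distinguished alternative, and the cycle-threshold bound $\mu\leq\frac{l-1}{l}h$---are missing, and without them neither direction closes.
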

Theorem A shows, in particular, that the Minimax {\sc scc} does no exhibit the reversal bias not only when there are two alternatives but also in other cases. Remarkably, that property holds true when alternatives are three, independently on the number of individuals, and when individuals are three, independently on the number of alternatives.

The proof of Theorem A requires a certain amount of work and the use of language and methods taken from graph theory\footnote{Note that the use of graphs in social choice theory is well established (see, for instance, Laslier (1997)).}.
Indeed, standard social choice theoretical arguments naturally allow to prove that, for lots of pairs $(h,n)$, the Minimax {\sc scc} suffers the reversal bias\footnote{Propositions \ref{blacktable-odd} and \ref{blacktable-even}, which determine a large set of pairs $(h,n)$ for which the Minimax {\sc scc} suffers the reversal bias,
are based on an intuitive argument which could be carried on without the machinery of graph theory.}. On the other hand, except for the trivial case $n=2$,
they turn out to be difficult to apply to prove that, for the remaining pairs, the Minimax {\sc scc} is immune to the reversal bias. In particular, no simple intuition indicates how to treat the cases $(h,n)\in\{(4,4), (5,4), (7,4), (5,5)\}$.
For such a reason, we first propose a new characterization of the Minimax {\sc scc} showing that, for every preference profile, an alternative $x$ is selected by the Minimax {\sc scc} if and only if, for every majority threshold $\mu$ not exceeding the number of individuals but exceeding half of it, if
there is an alternative which is preferred by at least $\mu$ individuals
to $x$, then, for every alternative, there is another one which is preferred by at least $\mu$ individuals
to it (Proposition \ref{Kramer}).
%The description in terms of majority thresholds of the outcomes of the Minimax {\sc scc} allows us to
%employ language and  methods taken from graph theory
We then associate with each preference profile $p$ and each majority threshold $\mu$ a directed graph $\Gamma_{\mu}(p)$, called majority graph,
whose vertices are the alternatives and whose  arcs are the $\mu$-majority relations among alternatives (Section \ref{majority graphs}). By the analysis of connection and acyclicity properties of those graphs, we find out a general and unified method to approach the proof of Theorem A. That allows, in particular, to avoid the repetition of similar arguments and the discussion of very long lists of cases and subcases. The geometric representation of the graph $\Gamma_{\mu}(p)$ is also a useful mental guidance in the tricky steps needed to carry on such an analysis as well as the proof of Theorem A.
We emphasise that the results related to graph theory deal with quite general majority issues so that they are not limited, in their meaning, to the specific problem considered in the paper.
We are confident that those results could be a smart tool to manage, in the future, many other problems related not only to the Minimax {\sc scc}.

We also introduce two weaker versions of reversal bias. Namely, we say that a {\sc scc} suffers the reversal bias of type 2 if
there exists a preference profile such that the outcomes associated with it and its reversal are not disjoint and one of the two is a singleton;
we say instead that a {\sc scc} suffers the reversal bias of type 3 if there exists a preference profile such that the outcomes associated  with it and its reversal are not disjoint and none of the two is the whole set of the alternatives.
It is immediate to observe that the reversal bias (also called reversal bias of type 1) implies the reversal bias of type 2 which in turn implies  the reversal bias of type 3. Using the same tools and techniques used to prove Theorem A, we get the following results\footnote{Theorems B and C are rephrases of Theorem \ref{main-new} for $j=2$ and $j=3$, respectively.}.

\begin{thmB}
The Minimax {\sc scc} is immune to the reversal bias of type 2 if and only if  $h=2$ or $n\le 3$ or $(h,n)=(4,4)$.\vspace{-1mm}
\end{thmB}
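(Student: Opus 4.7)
The plan is to proceed in parallel with the proof of Theorem A, exploiting the characterization of the Minimax {\sc scc} in Proposition \ref{Kramer} and the majority-graph apparatus $\Gamma_{\mu}(p)$ introduced in Section \ref{majority graphs}. Writing $M$ for the Minimax {\sc scc} and $p^{\mathrm{rev}}$ for the reversal of $p$, immunity to the reversal bias of type 2 at $(h,n)$ amounts to the statement that for every profile $p$, if $M(p)=\{x\}$ is a singleton then $x\notin M(p^{\mathrm{rev}})$ (and symmetrically). Since type 1 bias implies type 2 bias, every witness constructed for Theorem A remains a witness here; what changes is the list of pairs $(h,n)$ for which immunity must be established and, conversely, the list of new pairs where one must build type 2 witnesses.

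For the sufficiency direction I would handle the three cases $h=2$, $n\le 3$, and $(h,n)=(4,4)$ separately. The case $h=2$ is essentially direct: with two voters every pairwise tally is $0$, $1$ or $2$, so a singleton Minimax outcome $\{x\}$ forces $x$ to be the top of both voters, whence in $p^{\mathrm{rev}}$ alternative $x$ is defeated by every other alternative with the maximal score $2$ while no other alternative is defeated so badly, and thus $x\notin M(p^{\mathrm{rev}})$. For $n\le 3$, the structure of the two possible $3$-vertex tournaments makes the argument elementary: if a Condorcet winner at some threshold exists then the reversed profile has that alternative as a Condorcet loser, and the Kramer characterization excludes it from $M(p^{\mathrm{rev}})$; if instead a majority cycle occurs then $M(p)$ cannot already be a singleton. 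The case $(h,n)=(4,4)$ is the delicate one and must be obtained by sharpening, step by step, the graph-theoretic case analysis already carried out for Theorem A on the four-vertex majority graphs, now extracting the stronger conclusion $M(p)\cap M(p^{\mathrm{rev}})=\emptyset$ whenever one side is a singleton.

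For the necessity direction I would construct, for every $(h,n)$ outside the immunity list, a profile $p$ with $M(p)=\{x\}$ and $x\in M(p^{\mathrm{rev}})$. The pairs that are immune to type 1 but not to type 2 are exactly $(3,n)$ with $n\ge 4$ together with $(5,4)$, $(7,4)$ and $(5,5)$; these are the only genuinely new cases. For $h=3$ a natural template is a profile in which $x$ is a strict Condorcet winner with every pairwise margin $2$-$1$, while the remaining $n-1$ alternatives are arranged so as to form a majority cycle that, once reversed, still makes $x$ sustain only the minimal possible worst-defeat, forcing $x\in M(p^{\mathrm{rev}})$ by the Kramer criterion; for $n\ge 4$ the extra alternatives give enough room to balance the reversed tallies. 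For $(5,4)$, $(7,4)$ and $(5,5)$, explicit ad hoc constructions, guided by the geometry of $\Gamma_{\mu}(p)$ and $\Gamma_{\mu}(p^{\mathrm{rev}})$, are needed: one looks for a configuration in which $x$ is the unique Minimax winner of $p$ (a single $\mu$-threshold detects all its rivals) while in $p^{\mathrm{rev}}$ the worst defeat of $x$ is matched by the worst defeat of every other alternative at some possibly larger threshold.

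The main obstacle will be the two borderline situations: proving immunity at $(4,4)$ and ruling in bias at $(5,4)$, $(7,4)$, $(5,5)$. Both demand a fine analysis of which $\Gamma_{\mu}$-structures can coexist for $p$ and $p^{\mathrm{rev}}$ when the singleton constraint is imposed on only one side, since this asymmetry lies exactly on the dividing line between the type 1 and the type 2 notions. Once these cases are resolved, they fold into the unified statement Theorem \ref{main-new} with $j=2$, completing the proof.
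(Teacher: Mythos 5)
Your overall architecture matches the paper's: reuse the type-1 witnesses outside $T_1$, build new type-2 witnesses on $T_1\setminus T_2=\{(3,n):n\ge 4\}\cup\{(5,4),(7,4),(5,5)\}$ (you identify this set correctly), and prove immunity on $T_2$. The problem is that the two points you yourself flag as delicate are exactly the points where no argument is given. For immunity at $(4,4)$, ``sharpening, step by step, the graph-theoretic case analysis'' defers the whole question; the paper needs no case analysis at all. The key observation is that for $(4,4)$ one has $\mu_0=\mu_a=3$, so $\mu(p),\mu(p^r)\ge\mu_a$ and Corollary \ref{soglia-green} makes $\Gamma_{\mu(p)}(p)$ and $\Gamma_{\mu(p^r)}(p^r)$ acyclic automatically; Lemma \ref{four}(i) then gives $\mu(p)=\mu(p^r)$, and Lemma \ref{intersection3}(ii) (acyclicity plus $|D_{\mu(p)}(p)|=1$ forces connectedness, hence no isolated vertex, hence $D_{\mu(p)}(p)\cap D_{\mu(p^r)}(p^r)=\varnothing$) concludes. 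The same one-line verification that $\mu(p)\ge\mu_a$ also covers $h=2$ and $n\le 3$, so your separate ad hoc treatments are not needed --- and they contain inaccuracies. Your $n\le 3$ dichotomy (``Condorcet winner at some threshold'' versus ``majority cycle'') is not exhaustive: when $h$ is even the unique Minimax winner can be the unique \emph{maximal} vertex of $\Gamma_{\mu(p)}(p)$ without being a \emph{maximum} (a chain $x\to y\to z$), and in any case excluding $x$ from $D_{\mu(p^r)}(p^r)$ requires knowing $\mu(p^r)\le\mu(p)$, which you never establish. Likewise, for $h=2$ the claim that ``no other alternative is defeated so badly'' in $p^r$ is false in general (take both voters with the same ranking); what you actually need is $D_2(p^r)\neq\varnothing$, i.e.\ Greenberg's theorem, to produce an alternative with a strictly smaller worst defeat than $x$.

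The necessity direction is likewise only a description of what a witness should look like. For $(3,n)$ with $n\ge 4$ your template is plausible, but the paper's proof consists precisely in exhibiting and checking an explicit profile (e.g.\ $p_1=[1,(5),\ldots,(n),2,3,4]^T$, $p_2=[1,(5),\ldots,(n),3,4,2]^T$, $p_3=[4,2,3,(n),\ldots,(5),1]^T$, for which $D_{\mu(p)}(p)=\{1\}$ while $D_{\mu(p^r)}(p^r)=N$), and for the three sporadic pairs $(5,4)$, $(7,4)$, $(5,5)$ you explicitly leave the constructions to be found. Until those profiles are written down and verified, the ``only if'' half of the statement is not proved. In short: the skeleton is right and the case split is correct, but the two substantive ingredients --- the uniform acyclicity argument for immunity and the explicit witness profiles --- are missing.
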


\begin{thmC}
The Minimax {\sc scc} is immune to the reversal bias of type 3 if and only if $n=2$ or $(h,n)=(3,3)$.
\end{thmC}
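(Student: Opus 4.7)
The plan is to establish the two implications separately, using in both directions the characterisation of Minimax in Proposition~\ref{Kramer} and the graph-theoretic machinery of Section~\ref{majority graphs}. Recall that the reversal bias of type~3 occurs at a profile $p$ when $M(p)\cap M(p^r)\neq\emptyset$ and neither of $M(p)$, $M(p^r)$ equals the whole alternative set.

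For the immunity direction I would verify the two claimed cases. When $n=2$, Minimax coincides with simple majority, so for every profile either $M(p)$ and $M(p^r)$ are complementary singletons or both equal the full pair, and in either situation no type~3 bias occurs. When $(h,n)=(3,3)$ the thresholds to consider are $\mu\in\{2,3\}$: $\Gamma_2(p)$ is a complete tournament on three vertices (since $N(x,y)+N(y,x)=3$ is odd), and $\Gamma_3(p)\subseteq\Gamma_2(p)$ is the acyclic subgraph of unanimous dominations. If $\Gamma_2(p)$ is a $3$-cycle, all pairwise tallies are $2$--$1$, so $\Gamma_3(p)=\emptyset$; every vertex has an in-arc in $\Gamma_2(p)$ and none in $\Gamma_3(p)$, whence Proposition~\ref{Kramer} gives $M(p)=M(p^r)=\{1,2,3\}$. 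If instead $\Gamma_2(p)$ is transitive with Condorcet winner $w$ and loser $\ell$, then $w$ has no in-arc in $\Gamma_2(p)$, hence none in $\Gamma_3(p)$, so $w\in M(p)$; the other two alternatives have an in-arc in $\Gamma_2(p)$ while $w$ has not, so they are excluded. Thus $M(p)=\{w\}$ and, by the same argument applied to $p^r$, $M(p^r)=\{\ell\}$, which is disjoint from $\{w\}$.

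For the bias direction I need to exhibit a witnessing profile for every $(h,n)$ with $n\geq 3$ and $(h,n)\neq(3,3)$. Since reversal bias of type~1 implies type~2 and type~2 implies type~3, Theorem~B already supplies witnesses for all pairs with $h\geq 3$, $n\geq 4$ and $(h,n)\neq(4,4)$. It remains to treat the pair $(4,4)$, the pairs with $n=3$ and $h\neq 3$, and the pairs with $h=2$ and $n\geq 4$. For each such pair I would construct an explicit profile following a uniform graph-theoretic heuristic: pick a majority threshold $\mu$ and arrange $\Gamma_\mu(p)$ so that some alternative $x$ is isolated (neither in-arc nor out-arc) while the remaining arcs compel some other alternative to be excluded from both $M(p)$ and $M(p^r)$. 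Then $x$ is isolated also in $\Gamma_\mu(p^r)$ and therefore belongs to $M(p)\cap M(p^r)$, yet neither outcome is the full set. Concrete small seeds drive the construction: the four-voter profile with rankings $1>2>3$, $1>3>2$, $3>1>2$, $2>3>1$ realises $N(1,2)=3$ and all other pairwise tallies equal to $2$, yielding $M(p)=\{1,3\}$ and $M(p^r)=\{2,3\}$ for $(h,n)=(4,3)$; a Pareto-style pair such as $1>2>3$ and $3>1>2$ handles $(h,n)=(2,3)$; and these seeds extend to larger $h$ or $n$ by appending a Pareto-dominated or Pareto-indifferent block of alternatives, or by adding symmetric voter pairs that preserve the isolated-vertex phenomenon.

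The main obstacle is the construction for $(h,n)=(4,4)$: this is the unique pair outside the immunity regime of Theorem~B that does not automatically inherit a witness from a type~2 bias. Here both $M(p)$ and $M(p^r)$ must contain at least two elements, share one, and yet be proper subsets of $\{1,2,3,4\}$; this forces $\Gamma_3(p)$ to contain exactly one isolated vertex and the remaining three vertices to form an asymmetric fan of arcs, and one must then verify that such a graph is realisable by an honest preference profile on four voters. A secondary subtlety is the compatibility constraint used in the $(3,3)$ immunity argument, namely the impossibility of simultaneously having a $3$-cycle in $\Gamma_2(p)$ and a non-empty $\Gamma_3(p)$; this constraint (an easy voter-counting observation) is what reduces the proof there to the two clean subcases above.
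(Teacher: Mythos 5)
Your immunity direction is sound and essentially the paper's: the $n=2$ case is the simple-majority observation, and your $(3,3)$ dichotomy (the complete $2$-majority tournament on three vertices is either a $3$-cycle, forcing $M(p)=M(p^r)=N$, or transitive, forcing disjoint singletons $\{w\}$ and $\{\ell\}$) is exactly what the paper extracts from Lemma \ref{lemma33}. The gaps are all in the bias direction, and there are three. First, you never actually produce the $(4,4)$ witness: you describe a target shape for $\Gamma_3(p)$ and leave its realisability "to be verified". The paper exhibits the profile with two voters ranking $1>2>3>4$ and two ranking $4>2>3>1$, whose only $\mu_0$-arc is $2\to 3$, so that $M(p)=\{1,2,4\}$ and $M(p^r)=\{1,3,4\}$; note this has two isolated vertices rather than the "exactly one isolated vertex plus an asymmetric fan" you posit (your shape is also realisable, but you would still have to write the profile down). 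Second, for $n=3$ your seeds have $h=2$ and $h=4$, and adding reversal-symmetric voter pairs preserves the parity of $h$, so every odd $h\ge 5$ is left uncovered. This is not a routine omission: for odd $h$ the graph $\Gamma_{\mu_0}(p)$ is a complete tournament (Lemma \ref{minimalmu-i}), so no vertex is isolated at the minimal threshold, and your scheme "make $x$ isolated in $\Gamma_\mu(p)$" only puts $x$ into $M(p)\cap M(p^r)$ if $\mu(p)=\mu(p^r)=\mu$; for $\mu>\mu_0$ this forces you to first arrange $D_{\mu_0}(p)=D_{\mu_0}(p^r)=\varnothing$ via a $3$-cycle in $\Gamma_{\mu_0}(p)$ and then read off $M$ at the next threshold. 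The paper's three mod-$3$ families of profiles do exactly this, and some such two-threshold construction is unavoidable here.

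Third, the extension of the $h=2$ seed to $n\ge 4$ by "appending a Pareto-dominated block" fails: if the new alternatives sit at the bottom of both rankings of $p$, they sit at the top of both rankings of $p^r$, where they Pareto-dominate everything, so $M(p^r)$ is contained in the appended block and is disjoint from $M(p)\subseteq\{1,2,3\}$. One must instead weave the new alternatives into the middle of the chain, as in the paper's profile $p_1=[1,2,\ldots,n]^T$, $p_2=[n,1,\ldots,n-1]^T$, which keeps the alternative $n$ isolated in $\Gamma_2(p)$ and yields $M(p)\cap M(p^r)=\{n\}$. In short, your overall architecture (inherit witnesses from Theorem B outside $T_2$, then treat $T_2\setminus T_3$ by hand) matches the paper's, and your $(2,3)$ and $(4,3)$ seeds are correct, but of the remaining hand-built families one is missing outright ($(4,4)$), one cannot be reached by your extension mechanism (odd $h\ge 5$, $n=3$), and one is built by a heuristic that breaks under reversal ($h=2$, $n\ge 4$).
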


We emphasize that there is an interesting link between the different qualifications of reversal bias above described and the concept of Condorcet loser. Indeed,  let $C$ be a {\sc scc} satisfying the Condorcet principle, that is, always selecting the Condorcet winner as unique outcome when it exists. If $C$ is immune to the reversal bias of type 1, then it never selects the Condorcet loser as unique outcome, that is, $C$ fulfils the weak Condorcet loser property; if $C$ is immune to the reversal bias of type 2, then it never selects the Condorcet loser, that is, $C$ fulfils the Condorcet loser property.
Thus, since the Minimax {\sc scc} satisfies the Condorcet principle, Theorems A and B provide, in particular, conditions on $(h,n)$ that are sufficient to make the Minimax {\sc scc} satisfy the weak Condorcet loser property and the Condorcet loser property, respectively. Certainly, as it is not known whether such conditions are also necessary, determining all the pairs $(h,n)$ making the Minimax {\sc scc} satisfy those properties is an interesting problem which, in our opinion, can be fruitfully attacked using the methods described in this paper.
Finally note that, given a {\sc scc} $C$ always selecting the Condorcet winner (not necessarily as unique outcome) when it exists, we have that if $C$ is immune to the reversal bias of type 2, then it fulfils the weak Condorcet loser property; if $C$ is immune to the reversal bias of type 3, then it never selects the Condorcet loser when the set of outcomes is different from the whole set of alternatives.

Observe now that, even though the main concepts of our paper are mainly inspired to the ideas of Saari and Barney (2003), the framework we consider, as well as the terminology we use, is different from the one used by those authors. Indeed, they deal with election methods, namely, functions from the set of  finite sequences of individual preferences (still called preference profiles) to the set of complete and transitive relations on the set of alternatives.
In that framework, they say that an election method suffers the reversal bias if it associates the same relation with a preference profile and its reversal, provided that such a relation is not a complete tie, so that in their paper the expression reversal bias is used with a different meaning. For every $k\le n-1$, they also introduce the concept of $k$-winner reversal bias (called top-winner bias when $k=1$), that is that phenomenon that occurs when an election method associates with a preference profile and its reversal two relations having the property to have the same $k$ top ranked alternatives\footnote{Saari (1994) introduces for election methods another interesting concept, called reversal symmetry, which is related to the ones now discussed. Namely, an election method is said to be reversal symmetric if the outcomes associated with any preference profile and its reversal are one the reversal of the other. Of course, if an election method is reversal symmetric it cannot suffer either the reversal bias or the $k$-winner reversal bias. Reversal symmetry has been recently studied
by Llamazares and Pe$\mathrm{\tilde{n}}$a (2015) for positional methods and by Bubboloni and Gori (2015) for social welfare functions with values in the set of linear orders. }. Anyway, despite the differences, it is obvious that any result of theirs about the top-winner reversal bias of a certain election method implies some information about the reversal bias of type 1 for the {\sc scc} generated by that method restricting its domain to those sequences of individual preferences having $h$ terms and looking only at those alternatives that are top ranked. On the other hand, it is clear that none of their theorems implies a result about the reversal biases of type 2 and 3 as an immediate by-product. In particular, from Theorem 8 in Saari and Barney (2003), we deduce that the Borda and Copeland {\sc scc}s are immune to the reversal bias of type 1, but nothing can be deduced about the other types of reversal bias. That makes interesting the following result\footnote{Theorem D is a rephrase of Proposition \ref{sym3}.}.
\begin{thmD}
The Borda and Copeland {\sc scc}s are immune to the reversal bias of type 3.
\end{thmD}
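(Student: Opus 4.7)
My plan is to exploit the fact that both the Borda and the Copeland {\sc scc}s are scoring rules whose associated scoring function transforms in a controlled way under reversal of the preference profile. For each alternative $x$ and each preference profile $p$, let $s(p,x)$ denote the Borda score (respectively, the Copeland score) of $x$ at $p$, so that the selected alternatives are exactly those maximizing $s(p,\cdot)$. The central observation is a reversal identity for each rule: for Borda, $s(p,x)+s(\bar{p},x)=h(n-1)$ for every $x$, because an alternative ranked $k$-th from the top in an individual preference (contributing $n-k$ Borda points) becomes $(n-k+1)$-th from the top in the reversed preference (contributing $k-1$), and these two quantities always sum to $n-1$; for Copeland, $s(\bar{p},x)=-s(p,x)$ for every $x$, because reversing $p$ reverses the direction of every strict pairwise majority relation while leaving ties unchanged, hence swapping the numbers of pairwise victories and defeats of each alternative.

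Once these identities are in hand, the rest is immediate. Suppose, toward showing immunity, that $x\in C(p)\cap C(\bar{p})$. From $x\in C(p)$ we obtain $s(p,x)\ge s(p,y)$ for every alternative $y$, while from $x\in C(\bar{p})$, together with the relevant reversal identity, we obtain $s(p,x)\le s(p,y)$ for every alternative $y$. Consequently $s(p,\cdot)$ is constant on the set of alternatives, and then, via the identity applied once more, so is $s(\bar{p},\cdot)$. Hence every alternative maximizes both scoring functions, so $C(p)$ and $C(\bar{p})$ both coincide with the whole set of alternatives, violating the nontriviality clause in the definition of reversal bias of type 3.

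The only genuine care in the proof lies in writing the two reversal identities cleanly and, for Copeland, checking that the argument is insensitive to the specific convention used for scoring pairwise ties (victories minus defeats, or awarding half a point per tie): any reasonable convention produces a score with $s(\bar{p},\cdot)=-s(p,\cdot)$ up to a global additive constant, which suffices. I do not expect any deeper obstacle, and in particular no appeal to the majority-graph machinery developed for the Minimax analysis is needed. It is worth noting that the argument actually proves a bit more than claimed, namely that whenever $C(p)\cap C(\bar{p})\neq\emptyset$ both outcomes equal the whole set of alternatives, a strengthening which explains why the Borda and Copeland {\sc scc}s escape all three types of reversal bias except the weakest.
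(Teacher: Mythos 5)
Your proof is correct and follows essentially the same route as the paper's: both arguments rest on the reversal identities $s(p,x)+s(p^r,x)=h(n-1)$ for Borda and $s(p^r,x)=-s(p,x)$ for Copeland, from which a common element of $C(p)$ and $C(p^r)$ forces the score function to be constant and hence $C(p)=N$. One stray remark only: since immunity to the type~3 bias is the \emph{strongest} of the three immunities (suffering type~1 implies suffering type~2 implies suffering type~3), your argument shows that Borda and Copeland escape \emph{all} three biases, not ``all except the weakest.''
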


We conclude with an observation. Recall that a positional method is an election method where each time an alternative is ranked $k$-th by one individual it obtains $w_k$ points and alternatives are then ranked according to the final score they get; the vector $w=(w_k)_{k=1}^n\in\mathbb{R}^n$ associated with the method is called its voting vector and is assumed to satisfy the relations $w_1\ge w_2\ge \ldots\ge w_n$ and $w_1>w_n$. Assume now that $n\ge 3$ and consider a voting vector $w$ such that there exist $k_1,k_2\in\{1,\ldots,n\}$ with $w_{k_1}+w_{n-k_1}\neq w_{k_2}+w_{n-k_2}$.  Then, from Theorem 1 in Saari and Barney (2003), we deduce that the {\sc scc} generated by the positional method associated with $w$ suffers the reversal bias of type 1, provided that the number $h$ of individuals is large enough.
That fact is remarkable  because it implies the existence of many {\sc scc}s different from the Minimax {\sc scc}, like plurality and anti-plurality {\sc scc}s, which suffer that bias.
Certainly, as the considered theorem gives no information about the exact values of $h$ for which the reversal bias of type 1 really occurs, finding those values of $h$ is an interesting issue that deserves to be carefully investigated.
More generally, given any classical {\sc scc} $C$ and any social choice property, one can consider the problem to determine
conditions on the number of individuals and alternatives which are necessary and sufficient to make $C$ fulfil the property.
We believe that investigating those problems is an interesting and promising research project since, as particularly shown by our results on the reversal bias, comparing different {\sc scc}s on the basis of their properties cannot ignore how many individuals and alternatives are involved in the decision process.

%In fact, that issue can be seen as a part of a more general
%whose purpose is to determine, for every  classical {\sc scc},  conditions on the number of individuals and alternatives which are necessary and sufficient to make the considered {\sc scc} fulfil a given social choice property.
%Indeed, our results on the reversal bias suggest that comparing different {\sc scc}s on the basis of their properties cannot ignore how many individuals and alternatives are involved in the decision process.

\section{Preliminary definitions}

Let $\mathbb{N}_\diamond=\{a\in\mathbb{N}:a\ge 2\}$.
From now on, let $n,h\in \mathbb{N}_\diamond$ be fixed, and let $N=\{1,\ldots,n\}$ be the set of alternatives and $H=\{1,\ldots,h\}$ be the set of individuals.

A {\it preference relation} on $N$ is a linear order on $N$, that is, a complete, transitive and antisymmetric binary relation on $N$. The set of linear orders on $N$ is denoted by $\mathcal{L}(N)$. Let $q\in\mathcal{L}(N)$ be fixed. Given  $x,y\in N$, we  usually write $x\ge_{q}y$ instead of $(x,y)\in q$, and $x>_{q}y$ instead of $(x,y)\in q$ and $x\neq y$. The function $\mathrm{rank}_{q}:N\to \{1,\ldots,n\}$ defined, for every $x\in N$, by $\mathrm{rank}_q(x)=|\{y\in N: y>_q x\}|+1$ is
bijective.
We identify $q$ with the function $\mathrm{rank}_q^{-1}$ and denote it still by $q$. We also identify $q$ with the column vector $[q(1),\dots,q(n)]^T$.
Moreover, we define $q^r$ as the element in $\mathcal{L}(N)$ such that, for every $x,y\in N$,
$(x,y)\in q^r$ if and only if $(y,x)\in q$. Of course, $(q^r)^r=q$.
For instance, let $n=3$ and $q\in\mathcal{L}(N)$ be such that $2>_q 1>_q 3$. Then
  $q(1)=2$, $q(2)=1$, $q(3)=3$ and we identify $q$ with  $[2,1,3]^T$ and $q^r $ with $[3,1,2]^T.$

A {\it preference  profile} is an element of $\mathcal{L}(N)^h$. The set $\mathcal{L}(N)^h$ is denoted by $\mathcal{P}$.
Let $p\in\mathcal{P}$ be fixed. Given $i\in H$, the $i$-th component of $p$ is denoted by $p_i$ and represents the preferences of individual $i$. The preference profile  $p$ can be naturally identified with the matrix whose $i$-th column is $[p_i(1),\dots,p_i(n)]^T$.
Define $p^{r} \in \mathcal{P}$ as the preference profile such that, for every $i\in H$,
$(p^{r})_i= (p_i)^r$. Of course, $(p^r)^r=p$.
We will write the $i$-th component of  $p^{r}$ simply as $p^{r}_i,$ instead of $\left(p^{r}\right)_i$. Given $\mu\in \mathbb{N}\cap (h/2,h]$ and $x,y\in N$, we write $x>_\mu^p y$ if
$|\{i\in H: x>_{p_i} y\}|\ge \mu$. Note that $x>_\mu^p y$ if and only if  $y>_\mu^{p ^r}x.$
Elements in $ \mathbb{N}\cap (h/2,h]$ are called  majority thresholds. We call  {\it minimal majority threshold} the integer $\mu_0=\lceil \frac{h+1}{2}\rceil$. Further details about preference relations and preference profiles can be found in Bubboloni and Gori (2015).

A {\it social choice correspondence} ({\sc scc}) is a function from $\mathcal{P}$ to the set of the nonempty subsets of $N$.
The set of {\sc scc}s is denoted by $\mathfrak{C}$. Let $C\in \mathfrak{C}$. We say that $C$ suffers the {\it reversal bias (of type 1)} if, there exists $p\in\mathcal{P}$ and $x\in N$ such that
\[
 C(p)= C(p^r)=\{x\};
\]
the {\it reversal bias of type 2} if, there exists $p\in\mathcal{P}$ such that
\[
|C(p)|=1 \mbox{ and } C(p)\cap C(p^r)\neq\varnothing;
\]
the {\it reversal bias of type 3} if, there exists $p\in\mathcal{P}$ such that
\[
|C(p)|<n \mbox{ and } C(p)\cap C(p^r)\neq\varnothing.
\]
Clearly if $C\in \mathfrak{C}$ suffers the reversal bias of type 1, then $C$ suffers also the reversal bias of type 2, and if $C$ suffers the reversal bias of type 2 then $C$ suffers also the reversal bias of type 3. We define, for every $j\in\{1,2,3\}$, the sets
\[
\mathfrak{C}^{j}=\{C\in \mathfrak{C}: C\ \hbox{is immune to the reversal bias of type}\ j\}.
\]
 Note that $\mathfrak{C}^3\subseteq \mathfrak{C}^2\subseteq \mathfrak{C}^1$.

\section{The Minimax {\sc scc}}

In this section we focus on the  Minimax {\sc scc}, denoted by $M$ and defined, for every $p\in\mathcal{P}$, by\footnote{Fishburn (1977) presents the equivalent definition
\[
\begin{array}{l}
M(p)=\underset{x\in N}{\mathrm{argmax}} \underset{y\in N\setminus\{x\}}{\min} |\{i\in H: x>_{p_i}y\}|.
\end{array}
\] }
\[
M(p)=\underset{x\in N}{\mathrm{argmin}} \underset{y\in N\setminus\{x\}}{\max} |\{i\in H: y>_{p_i}x\}|.
\]
According to the above definition, $M(p)$ is then the set of those alternatives which minimize the greatest pairwise defeat, with respect to the individual preferences described by $p$.
However, the outcomes of the Minimax {\sc scc} admit an alternative interpretation in terms of majority thresholds.

Given $\mu \in \mathbb{N}\cap(h/2,h]$,  define, for every $p\in\mathcal{P}$, the set
\[
D_{\mu}(p)=\{x\in N: \forall y\in N,\,|\{i\in H: y>_{p_i}x\}|< \mu\}.
\]
 Thus, an alternative $x$ belongs to $D_{\mu}(p)$ if and only if it cannot be found another alternative which is preferred to $x$ by at least $\mu$ individuals, according to the preference profile $p$.
Note that the set $D_{\mu}(p)$ corresponds to the set of $\mu$-majority equilibria associated with $p$ as defined by Greenberg (1979) in the more general setting where individual preferences are represented via complete and transitive relations.
Observe that if $\mu\le \mu'$, then $D_{\mu}(p)\subseteq D_{\mu'}(p)$ for all $p\in\mathcal{P}$.
Moreover, as an immediate consequence of Corollary 3 in Greenberg (1979) and its proof,  for every $\mu\in \mathbb{N}\cap (h/2,h]$, we have that
\begin{equation}\label{Greenberg}
D_{\mu}(p)\neq \varnothing\mbox{ for all }p\in\mathcal{P}\mbox{ if and only if }\mu>\frac{n-1}{n}h.
\end{equation}
Since $h \in \mathbb{N}\cap(h/2,h]$ and $h>\frac{n-1}{n}h$, it is well defined the {\it Greenberg majority threshold} given by
\[
\mu_{G}=\min\left\{m\in \mathbb{N}\cap (h/2,h]: m>\frac{n-1}{n}h\right\}.
\]
For every $p\in\mathcal{P}$, we consider the integer
\[
\mu(p)=\min\{\mu \in \mathbb{N}\cap(h/2,h]: D_{\mu}(p)\neq \varnothing\}.
\]
Note that, since \eqref{Greenberg} implies $D_{\mu_G}(p)\neq \varnothing$, we have that $\mu(p)$ is well defined and
$ \mu_0\leq \mu(p)\leq \mu_G$.

We can now prove the following proposition.

\begin{proposition}\label{Kramer}  For every $p\in \mathcal{P}$, $M(p)=D_{\mu(p)}(p)$.
\end{proposition}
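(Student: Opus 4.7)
The plan is to unify both sides of the equality through the single per-alternative quantity
\[
f(x) = \max_{y \in N \setminus \{x\}} \bigl|\{i \in H : y >_{p_i} x\}\bigr|,
\]
so that $M(p) = \{x \in N : f(x) = m\}$ with $m = \min_{x \in N} f(x)$, while $D_\mu(p) = \{x \in N : f(x) < \mu\}$ for every majority threshold $\mu$. A first observation is that $D_\mu(p) \neq \varnothing$ if and only if $\mu \geq m+1$, so combining with the constraint $\mu \geq \mu_0$ built into the definition of a majority threshold we get the clean formula $\mu(p) = \max\{m+1,\mu_0\}$.

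The inclusion $M(p) \subseteq D_{\mu(p)}(p)$ is then immediate: if $f(x) = m$, then $f(x) < m+1 \leq \mu(p)$. For the reverse inclusion I would split into two cases depending on which term attains the maximum in $\mu(p) = \max\{m+1,\mu_0\}$. If $m+1 \geq \mu_0$, then $\mu(p) = m+1$ and $x \in D_{\mu(p)}(p)$ means $f(x) \leq m$, whence $f(x) = m$ and $x \in M(p)$. In this regime $\mu(p)-1$ coincides with $m$, so there is nothing to say.

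The delicate regime is $m+1 < \mu_0$, where $\mu(p) = \mu_0$ and there is a genuine gap between $m$ and $\mu(p)-1 = \mu_0 - 1$. The plan here is to show that every minimizer $x_0$ of $f$ is in fact a strict Condorcet winner, and then that this forces $|D_{\mu_0}(p)|=1$. Concretely, $f(x_0)=m\leq \mu_0-2$ together with the inequality $\mu_0-2 < h/2$ (which I would verify by separating the parities of $h$, as $\mu_0-2=h/2-1$ for $h$ even and $\mu_0-2=(h-3)/2$ for $h$ odd) yields $|\{i : y >_{p_i} x_0\}| \leq \mu_0-2 < h/2$ for every $y\neq x_0$, equivalently $|\{i : x_0 >_{p_i} y\}| > h/2$. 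Now if some $x\in D_{\mu_0}(p)$ were distinct from $x_0$, then specializing the defining inequality of $D_{\mu_0}$ to the pair $(x_0,x)$ would give $|\{i : x_0 >_{p_i} x\}| \leq \mu_0-1 \leq h/2$, contradicting the strict majority just established. Therefore $D_{\mu_0}(p)=\{x_0\}$, and since $M(p)\subseteq D_{\mu(p)}(p)=\{x_0\}$ is already known and $x_0\in M(p)$ by construction, the two sets coincide.

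The only real obstacle is this second case, where the naive identification $D_{\mu(p)}(p)=\{x : f(x)\leq m\}$ fails; the Condorcet-type argument above is precisely what bridges the gap between $m$ and $\mu_0-1$.
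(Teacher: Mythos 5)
Your proof is correct and follows essentially the same route as the paper's: the easy inclusion is identical, and your delicate case $\mu(p)=\mu_0>m+1$ is handled by the same counting identity $|\{i: x_0>_{p_i}y\}|+|\{i: y>_{p_i}x_0\}|=h$ that the paper uses to rule out a second element of $D_{\mu_0}(p)$. The explicit formula $\mu(p)=\max\{m+1,\mu_0\}$ is a clean repackaging of the paper's contradiction argument (which invokes $D_{\mu(p)-1}(p)=\varnothing$ when $\mu(p)-1>h/2$), but the mathematical content is the same.
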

\begin{proof}
We show first that $M(p)\subseteq D_{\mu(p)}(p)$ proving that $N\setminus D_{\mu(p)}(p)\subseteq N\setminus M(p)$. Let $x_0\in N\setminus D_{\mu(p)}(p).$ Then there exists $y_0\in N\setminus\{x_0\}$ such that $ |\{i\in H: y_0>_{p_i}x_0\}|\geq \mu(p)$.
 Picking now $x_1\in D_{\mu(p)}(p)$, we have that, for every $y\in N\setminus\{x_1\}$, $ |\{i\in H: y>_{p_i}x_1\}|\leq \mu(p)-1$. Thus,
\[
\underset{y\in N\setminus\{ x_1\}}{\max} |\{i\in H: y>_{p_i}x_1\}|\leq \mu(p)-1<|\{i\in H: y_0>_{p_i}x_0\}|\le \underset{y\in N\setminus\{x_0\}}{\max} |\{i\in H: y>_{p_i}x_0\}|,
\]
 which says $x_0\notin M(p)$.

We next show that $D_{\mu(p)}(p)\subseteq M(p).$  Let $x_0\in D_{\mu(p)}(p)$. Then, we have that
\[
\underset{y\in N\setminus\{ x_0\}}{\max} |\{i\in H: y>_{p_i}x_0\}|\leq \mu(p)-1.
\]
 Assume, by contradiction, that there exists $x_1\in N$ such that
\[
\underset{y\in N\setminus\{x_1\}}{\max} |\{i\in H: y>_{p_i}x_1\}|<\underset{y\in N\setminus\{ x_0\}}{\max} |\{i\in H: y>_{p_i}x_0\}|.
\]
Then $x_0\neq x_1$ and, for every $y\in N\setminus\{ x_1\}$, we have $|\{i\in H: y>_{p_i}x_1\}|<\mu(p)-1$. If $\mu(p)-1>h/2$, that says $x_1\in D_{\mu(p)-1}(p)=\varnothing$ and the contradiction is found. Assume instead that $\mu(p)-1\leq h/2$. Then $\mu(p)=\mu_0=\left\lceil \frac{h+1}{2}\right\rceil\leq \frac{h+2}{2}$ and since $|\{i\in H: x_0>_{p_i}x_1\}|\leq\mu_0-2,$ we get $|\{i\in H: x_1>_{p_i}x_0\}|\geq h-\mu_0+2.$ Now we observe that, due to $\mu_0\leq \frac{h+2}{2}$, we have $h-\mu_0+2\geq \mu_0$, against $x_0\in D_{\mu_0}(p)$.
\end{proof}

Define the sets
\[
\begin{array}{l}
T_1=\{(h,n)\in\mathbb{N}_\diamond^2:h\le 3\}\cup\{(h,n)\in\mathbb{N}_\diamond^2: n\le 3\}\cup\{(4,4), (5,4), (7,4), (5,5)\},\\
\vspace{-1mm}\\
T_2=\{(h,n)\in\mathbb{N}_\diamond^2: h=2\}\cup\{(h,n)\in\mathbb{N}_\diamond^2: n\le 3\}\cup\{(4,4)\},\\
\vspace{-1mm}\\
T_3=\{(h,n)\in\mathbb{N}_\diamond^2: n= 2\}\cup\{(3,3)\}.
\end{array}
\]
and note that $T_3\subsetneq T_2\subsetneq T_1$.
We can now state the main result of the paper. Its proof is technical and will be presented in Section \ref{proofs}. We stress that it relies on Proposition \ref{Kramer} and the use of language and methods of graph theory (Sections \ref{graph-sec} and \ref{majority graphs}).

\begin{theorem}\label{main-new}
Let $j\in\{1,2,3\}$. Then, $M\in\mathfrak{C}^j$ if and only if $(h,n)\in T_j$.
\end{theorem}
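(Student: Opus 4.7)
The strategy is to reformulate each type of reversal bias as a combinatorial condition on the majority graphs $\Gamma_\mu(p)$ via Proposition \ref{Kramer}. Since $x >_\mu^p y$ is equivalent to $y >_\mu^{p^r} x$, the graph $\Gamma_\mu(p^r)$ is obtained from $\Gamma_\mu(p)$ by reversing every arc; in particular, $D_\mu(p)$ is the set of sources of $\Gamma_\mu(p)$ and $D_\mu(p^r)$ is the set of sinks. Writing $\mu = \mu(p)$ and $\nu = \mu(p^r)$, Proposition \ref{Kramer} turns the three bias conditions into joint conditions on the pair $\bigl(\Gamma_\mu(p), \Gamma_\nu(p)\bigr)$: for type 1 a common vertex is simultaneously the unique source of $\Gamma_\mu(p)$ and the unique sink of $\Gamma_\nu(p)$; for type 2 the sink need not be unique; for type 3 neither source nor sink need be unique, but $M(p)$ must be a proper subset of $N$.

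For the sufficiency direction I would split $T_j$ into its infinite families and its exceptional pairs. The case $n = 2$ is immediate: $M$ then coincides with simple majority, so $M(p)$ and $M(p^r)$ are either disjoint singletons or both equal $N$, and no bias of any type occurs. For $n = 3$ (needed for types 1 and 2) one checks by hand the short list of shapes $\Gamma_{\mu(p)}(p)$ can take and verifies that a unique source in $\Gamma_{\mu(p)}(p)$ cannot also be a sink in $\Gamma_{\mu(p^r)}(p)$. The cases $h = 2$ (for type 2) and $h \le 3$ (for type 1) exploit that the only thresholds above $h/2$ are $2$ and $3$, constraining $\Gamma_\mu(p)$ heavily; the source/sink configurations required by the relevant type of bias are then excluded by the acyclicity and connectivity results of Sections \ref{graph-sec}--\ref{majority graphs}.

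The core of the theorem is the sporadic immunity at $(h,n) \in \{(4,4), (5,4), (7,4), (5,5)\}$ for type 1, at $(4,4)$ for type 2, and at $(3,3)$ for type 3. Here one must enumerate, up to the natural $S_n$-action, all directed graphs on $n$ vertices that can arise as $\Gamma_{\mu(p)}(p)$ with a unique source, and then verify that no such graph, when its arcs are reversed, has a sink matching the source (with the extra uniqueness condition for type 1). The enumeration is made tractable by Proposition \ref{Kramer}, the bound $\mu(p) \le \mu_G$, the realizability restriction $\mu > \tfrac{n-1}{n} h$ from \eqref{Greenberg}, and the connectivity and acyclicity properties of $\Gamma_\mu(p)$ derived in Section \ref{majority graphs}. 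I expect this step, combined with the need to check that each candidate graph is genuinely realized by an actual profile of the prescribed size, to be the main obstacle; it is precisely here that the graph-theoretic language pays off by replacing long ad hoc case lists with a uniform combinatorial analysis.

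For the necessity direction I would, for each $(h,n) \notin T_j$, exhibit an explicit witness profile. The introductory example handles $(6,4)$ for type 1, and analogous base examples can be built for each small offending pair (they will be cruder for types 2 and 3, since the bias condition is weaker and hence easier to realize). The base examples lift uniformly by two standard devices: inserting a pair of mutually reversed voters preserves every pairwise margin, hence $\Gamma_\mu(p)$ for every $\mu$ and in particular $M(p)$ and $M(p^r)$, so a witness at $(h_0, n)$ extends to every $(h_0 + 2k, n)$; and inserting a new alternative ranked last by all voters keeps the original source/sink configuration intact and places the new alternative outside $M(p) \cup M(p^r)$, extending a witness from $(h, n_0)$ to $(h, n_0 + k)$. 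Combining a small family of base examples, indexed by the parity of $h$ and by small values of $n$, with these two lifts covers all of $\mathbb{N}_\diamond^2 \setminus T_j$.
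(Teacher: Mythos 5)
Your reduction to sources and sinks of the majority graphs is the right reformulation (it is Lemma \ref{intersection2} in disguise), and your voter-lift is sound: adding a voter $q$ together with $q^r$ raises every pairwise count by exactly one and $h$ by two, so $\Gamma_{\mu+1}(p')=\Gamma_\mu(p)$, $\mu(p')=\mu(p)+1$, and $M(p')=M(p)$, $M(p'^r)=M(p^r)$. But the second lifting device is broken, and this is a genuine gap in the necessity direction. If you insert a new alternative $z$ ranked \emph{last} by every voter of $p$, then in $p^r$ that alternative is ranked \emph{first} by every voter, so $z$ beats every other alternative unanimously in $p^r$; hence $D_\mu(p'^r)=\{z\}$ for every threshold $\mu$ and $M(p'^r)=\{z\}$, which destroys the witness rather than preserving it. Reversal bias is a condition on the pair $(p,p^r)$, so any padding of the alternative set must keep the new alternative out of both $M(p)$ and $M(p^r)$ simultaneously, which forces $z$ to both beat and be beaten by something at the relevant thresholds; the naive bottom-ranking cannot achieve this. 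Since your voter-lift only moves $h$ in steps of two at fixed $n$, without a working $n$-lift you would need infinitely many base examples (one for each $n\ge 6$ and each parity of $h$), and the induction collapses. The paper avoids this by a construction uniform in $n$ (Propositions \ref{blacktable-odd} and \ref{blacktable-even}): place the designated winner first for roughly half the voters and last for the other half, so that it is isolated (hence maximal in both $\Gamma_\mu(p)$ and $\Gamma_\mu(p^r)$ at the right threshold), and arrange an $(n-1)$-cycle among the remaining alternatives, via Proposition \ref{green}, to knock all of them out of both $D$-sets.

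On the sufficiency side your plan is workable but not carried out where it matters. The paper does not enumerate digraphs up to the $S_n$-action; it shows that in every pair of $T_j$ the relevant thresholds satisfy $\mu(p)\ge\mu_a$ (or can be forced there by Lemma \ref{minimalmu-ii} when $h$ is odd and $\mu(p)=\mu_0$), so that $\Gamma_{\mu(p)}(p)$ is acyclic by Corollary \ref{soglia-green}, hence connected when $D_{\mu(p)}(p)$ is a singleton (Corollary \ref{cor-connected}), and then the common alternative would have to be isolated in a connected graph on $n\ge 2$ vertices, a contradiction. If you pursue the enumeration route you must also track that $\mu(p)$ and $\mu(p^r)$ may differ, and certify realizability of each candidate graph by a profile of the prescribed $h$; the threshold inequalities you cite are exactly what makes the paper's structural shortcut possible, and you would be better served using them directly than as pruning rules for a case list.
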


\section{The Borda and Copeland {\sc scc}s}

In this section we show that, differently from the case of the Minimax {\sc scc}, the analysis of the reversal bias is easy for the  Borda and Copeland {\sc scc}s.
Those {\sc scc}s are respectively denoted by $Bor$ and  $Cop$, and defined\footnote{With Borda {\sc scc} we mean the well-known Borda count. The definition of the Copeland {\sc scc} can be
found, for instance, in Fishburn (1977).}, for every $p\in\mathcal{P}$, as
\[
\begin{array}{l}
Bor(p)=\underset{x\in N}{\mathrm{argmax}} \sum_{i=1}^h \big(n-\mathrm{rank}_{p_i}(x)\big),\\
\\
Cop(p)=\underset{x\in N}{\mathrm{argmax}}\, \big(|\{y\in N: x>_{\mu_0}^p y\}|-|\{y\in N: y>_{\mu_0}^p x\}|\big).
\end{array}
\]
The following results show that they are immune to the reversal bias of type 3.
\begin{proposition}\label{sym3} $Bor,\ Cop\in \mathfrak{C}^3$.
\end{proposition}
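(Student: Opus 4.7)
The plan is to prove both cases using a simple additive identity: for both $Bor$ and $Cop$, the underlying score function satisfies a relation between $p$ and $p^r$ which forces the common alternative in $C(p)\cap C(p^r)$ to spread its status to every alternative.

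First, for $Bor$, I would define the Borda score $B_p(x)=\sum_{i=1}^h(n-\mathrm{rank}_{p_i}(x))$ and observe that $\mathrm{rank}_{p^r_i}(x)=n+1-\mathrm{rank}_{p_i}(x)$ for every $i\in H$ and $x\in N$. A one-line computation then gives
\[
B_p(x)+B_{p^r}(x)=h(n-1) \quad \text{for every }x\in N.
\]
For $Cop$, setting $s_p(x)=|\{y\in N:x>_{\mu_0}^p y\}|-|\{y\in N:y>_{\mu_0}^p x\}|$ and using the identity $x>_{\mu_0}^p y \Leftrightarrow y>_{\mu_0}^{p^r}x$ already recorded in the preliminaries, one gets immediately $s_{p^r}(x)=-s_p(x)$, hence
\[
s_p(x)+s_{p^r}(x)=0 \quad \text{for every }x\in N.
\]

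With these identities in hand, the argument is uniform. Let $C\in\{Bor,Cop\}$ with associated score $f_p$ and constant $K$ such that $f_p(x)+f_{p^r}(x)=K$ for all $x\in N$. Assume $C(p)\cap C(p^r)\neq\varnothing$ and pick $x_0$ in the intersection. Setting $M=\max_{y\in N}f_p(y)$ and $M'=\max_{y\in N}f_{p^r}(y)$, we have $f_p(x_0)=M$ and $f_{p^r}(x_0)=M'$, so $M+M'=K$. For an arbitrary $y\in N$, the inequalities $f_p(y)\le M$ and $f_{p^r}(y)\le M'$ combined with $f_p(y)+f_{p^r}(y)=K=M+M'$ force both to be equalities, so $y\in C(p)$. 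Therefore $C(p)=N$, and in particular $|C(p)|=n$, which shows $C\notin$ the family of {\sc scc}s exhibiting the reversal bias of type 3, i.e.\ $C\in \mathfrak{C}^3$.

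I expect no real obstacle. The only point to handle with a bit of care is checking the Copeland identity $s_{p^r}(x)=-s_p(x)$, which requires nothing more than substituting $\mu=\mu_0$ into the noted equivalence $x>_\mu^p y \Leftrightarrow y>_\mu^{p^r} x$; the rest is bookkeeping.
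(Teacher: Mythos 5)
Your proposal is correct and is essentially the paper's own argument: in both cases the key fact is that the score functions for $p$ and $p^r$ sum to a constant ($h(n-1)$ for Borda, $0$ for Copeland), so a common maximizer forces the score to be constant and hence $C(p)=N$. The only cosmetic difference is that you package the two cases into one uniform "sum equals a constant" step, whereas the paper phrases the Borda case via the auxiliary function $u(x)=\sum_i \mathrm{rank}_{p_i}(x)$ realizing both its minimum and maximum at $x_0$.
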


\begin{proof}  We start considering the Borda {\sc scc}. We need to show that, for every $p\in\mathcal{P}$, $Bor(p)\cap Bor(p^r)\neq\varnothing$ implies $Bor(p)=N$.
Fix then $p\in\mathcal{P}$ and $x_0\in N$ such that  $x_0\in Bor(p)\cap Bor(p^r)$. Let $f$, $g$ and $u$ be the functions from $N$ to $\mathbb{R}$
defined, for every $x\in N$, by
\[
f(x)=\sum_{i=1}^h \big(n-\mathrm{rank}_{p_i}(x)\big),\quad g(x)=\sum_{i=1}^h \big(n-\mathrm{rank}_{p^r_i}(x)\big),\quad u(x)=\sum_{i=1}^h \mathrm{rank}_{p_i}(x).
\]
Note that
\[
Bor(p)=\underset{x\in N}{\mathrm{argmax}} \,f(x), \quad
Bor(p^r)=\underset{x\in N}{\mathrm{argmax}}\, g(x),
\]
and that, for every $x\in N$, $f(x)=hn-u(x)$ and $g(x)=u(x)-h$, due to the fact that  $\mathrm{rank}_{p^r}(x)=n+1-\mathrm{rank}_{p}(x).$
Then $x_0$ realises  both the minimum and the maximum of $u$, so that $u$ is constant. It follows that $f$ is constant too and therefore $Bor(p)=N.$

We next consider the Copeland {\sc scc}. We need to show that, for every $p\in\mathcal{P}$, $Cop(p)\cap Cop(p^r)\neq\varnothing$ implies $Cop(p)=N$.
Fix then $p\in\mathcal{P}$ and $x_0\in N$ such that  $x_0\in Cop(p)\cap Cop(p^r)$. Let $f$ and $g$ be functions from $N$ to $\mathbb{R}$
defined, for every $x\in N$, by
\[
f(x)= |\{y\in N: x>_{\mu_0}^p y\}|-|\{y\in N: y>_{\mu_0}^p x\}|, \quad g(x)= |\{y\in N: x>_{\mu_0}^{p^r} y\}|-|\{y\in N: y>_{\mu_0}^{p^r} x\}|.
\]
Note that
\[
Cop(p)=\underset{x\in N}{\mathrm{argmax}} \,f(x), \quad
Cop(p^r)=\underset{x\in N}{\mathrm{argmax}}\, g(x).
\]
Moreover, since  $x>_{p^r_i}y$ is equivalent to $y>_{p_i}x$ for all $x,y\in N$ and $i\in H$, we have that, for every $x\in N$, $g(x)=-f(x)$.
 Then $x_0$ realises  both the minimum and the maximum of $f$. It follows that $f$ is constant and therefore $Cop(p)=N.$
\end{proof}

Next corollaries show how the results proved about the reversal bias of $M$, $Bor$ and $Cop$ can be used to establish conditions on the number of individuals and alternatives that are necessary and sufficient to have the equalities $M=Bor$ and $M=Cop$.

\begin{corollary}\label{confronto1}
 $M=Bor$ if and only if $n=2$.
\end{corollary}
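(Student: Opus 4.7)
The plan is to prove the two directions separately, using the reversal-bias results already established in the paper for most pairs $(h,n)$ and resorting to explicit counterexamples only for the exceptional pairs that slip through.

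For $n=2$, I would check directly from the definitions that both $Bor(p)$ and $M(p)$ reduce to the simple majority rule: with only two alternatives, the Borda score of $x$ equals the number of voters ranking $x$ first, while the greatest pairwise defeat of $x$ equals the number of voters ranking the other alternative first. Maximising the former and minimising the latter therefore select the same set, so $M(p)=Bor(p)$ for every $p\in\mathcal{P}$.

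For the converse, fix $n\geq 3$ and $h\geq 2$; the goal is to exhibit a profile on which $M$ and $Bor$ disagree. Whenever $(h,n)\notin T_1$, Theorem \ref{main-new} (applied with $j=1$) gives $M\notin\mathfrak{C}^1$, while Proposition \ref{sym3} gives $Bor\in\mathfrak{C}^3\subseteq\mathfrak{C}^1$; hence $M$ suffers the reversal bias of type 1 and $Bor$ does not, so $M\neq Bor$. This covers every pair outside $T_1$ in one stroke. For the residual pairs $(h,n)\in T_1$ with $n\geq 3$, namely $h\in\{2,3\}$ for any $n\geq 3$, $n=3$ for $h\geq 4$, and the four sporadic pairs $\{(4,4),(5,4),(7,4),(5,5)\}$, I would supply explicit counterexamples.

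The general template for these residual cases is a variant of the Borda paradox: find a three-alternative profile on $\{a_1,a_2,a_3\}$ on which $M$ and $Bor$ disagree, typically by choosing one with a strict Condorcet winner which is not the Borda winner. By Proposition \ref{Kramer} the Condorcet winner is then the unique element of $M(p)$, while $Bor(p)$ picks a strictly larger set or another alternative; for $n>3$ one pads such a profile by appending $a_4,\ldots,a_n$ at the bottom of every individual ranking, which leaves all comparisons among $a_1,a_2,a_3$ unaffected and keeps $a_4,\ldots,a_n$ out of both $M(p)$ and $Bor(p)$. The case $h=2$, in which no strict Condorcet winner can exist, requires one small tailored profile: with the first voter ranking $a_1>a_2>a_3>\cdots>a_n$ and the second voter ranking $a_3>a_1>a_2>a_4>\cdots>a_n$, a direct calculation gives $Bor(p)=\{a_1\}$ and $M(p)=\{a_1,a_3\}$. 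The main obstacle here is therefore not conceptual but administrative: the plan reduces to combining the reversal-bias dichotomy with a short menu of explicit profiles and verifying by hand that each of them behaves as claimed.
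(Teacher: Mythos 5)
Your first direction and the reduction for pairs outside $T_1$ are fine, but the proposal has two problems in the converse direction. The first is a genuine gap: your residual family is infinite (all of $h\in\{2,3\}$ with $n\ge 3$ and all of $n=3$ with $h\ge 4$, plus four sporadic pairs), and the uniform template you propose for it --- a three-alternative profile with a strict Condorcet winner that is not the Borda winner, padded at the bottom --- provably cannot be realised for $(h,n)=(4,3)$. Indeed, if $1$ is a Condorcet winner for $h=4$, $n=3$, then at least $3$ voters prefer $1$ to $2$ and at least $3$ prefer $1$ to $3$, so the Borda score of $1$ is at least $6$ out of a total of $12$; for another alternative to tie it, the third alternative would need score $0$ and then $|\{i:1>_{p_i}2\}|=2<3$, a contradiction. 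So for $(4,3)$ every Condorcet winner is automatically the unique Borda winner and your template yields nothing; a different profile is needed there (one exists, e.g.\ one on which $M$ returns a two-element set while $Bor$ returns a singleton, but it is not of the advertised form). More generally, you never verify that the template is realisable for each member of the infinite family $n=3$, $h\ge 4$, where the construction is parity-sensitive.

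The second problem is that you are using the weakest of the three immunity results when the strongest is available, and this is exactly what creates the large residual family. The paper's proof uses Proposition \ref{sym3} in the form $Bor\in\mathfrak{C}^3$ together with Theorem \ref{main-new} for $j=3$: for every $(h,n)\notin T_3$ one gets $M\notin\mathfrak{C}^3$ while $Bor\in\mathfrak{C}^3$, hence $M\neq Bor$. Since $T_3=\{(h,n):n=2\}\cup\{(3,3)\}$, the only residual pair with $n\ge 3$ is $(3,3)$, which is dispatched by the single explicit profile $p=\bigl[[1,2,3]^T,[1,2,3]^T,[2,3,1]^T\bigr]$, where $M(p)=\{1\}$ and $Bor(p)=\{1,2\}$. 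Replacing $T_1$ and $\mathfrak{C}^1$ by $T_3$ and $\mathfrak{C}^3$ in your argument collapses the entire case analysis, eliminates the $(4,3)$ obstruction, and removes the need for the padding construction altogether.
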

\begin{proof}If $n=2$, then we surely have $M=Bor$. Assume now that $n\ge 3$. If $(h,n)\not \in T_3$, then, by Theorem \ref{main-new} and Proposition \ref{sym3},  $M\not \in \mathfrak{C}^3$  and $Bor\in \mathfrak{C}^3$ so that $M\neq Bor$. If  $(h,n)\in T_3$, then
$(h,n)=(3,3)$ and we still have $M\neq Bor$ since the two {\sc scc}s differ, for instance, on the preference profile
\[
p=\left[
\begin{array}{cccccc}
1&1&2\\
2&2&3\\
3&3&1\\
\end{array}
\right]
\]
\end{proof}

\begin{corollary}\label{confronto2}
$M=Cop$ if and only if $(h,n)\in T_3$.
\end{corollary}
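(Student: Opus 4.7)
The plan is to mirror the proof of Corollary~\ref{confronto1}, using Proposition~\ref{sym3} to shift the bulk of the work onto Theorem~\ref{main-new}. For the ``only if'' direction I would argue by contraposition: if $(h,n)\notin T_3$, then $n\ge 3$ and $(h,n)\neq(3,3)$, so Theorem~\ref{main-new} with $j=3$ yields $M\notin\mathfrak{C}^3$, while Proposition~\ref{sym3} yields $Cop\in\mathfrak{C}^3$; consequently $M\neq Cop$.

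For the ``if'' direction I would split $T_3$ into its two pieces. The case $n=2$ is immediate, since both $M$ and $Cop$ reduce to simple majority and return the whole set $N$ on a tied profile. The genuine work lies in the case $(h,n)=(3,3)$, which I expect to be the main obstacle. I would dispatch it by analysing the majority tournament induced by $p$ on $N=\{1,2,3\}$: because $h=3$ is odd, that tournament is tie-free, and there are only two graph-theoretic patterns to consider, namely a transitive tournament or a $3$-cycle.

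In the transitive case a Condorcet winner $x$ exists, so Proposition~\ref{Kramer} gives $M(p)=D_2(p)=\{x\}$, and the Copeland scores $2,0,-2$ give $Cop(p)=\{x\}$ as well. In the cyclic case no alternative lies in $D_2(p)$, so $\mu(p)=3$; but no pairwise defeat among $3$ voters in a $3$-cycle can be unanimous, so $D_3(p)=N$ and hence $M(p)=N$, and simultaneously each alternative wins one pairwise contest and loses one, forcing all Copeland scores to $0$ and thus $Cop(p)=N$. Hence $M(p)=Cop(p)$ for every $p$ when $(h,n)=(3,3)$.

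The simplification that makes the $(3,3)$ case tractable is the parity of $h$: it eliminates pairwise ties and reduces the landscape of majority tournaments to just the two patterns above. Everything else is a direct consequence of the two main structural results already proved, exactly parallel to the bookkeeping in Corollary~\ref{confronto1}.
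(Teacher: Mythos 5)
Your proof is correct, and its skeleton coincides with the paper's: the ``only if'' direction via Theorem~\ref{main-new} with $j=3$ against Proposition~\ref{sym3}, and the trivial $n=2$ case, are exactly the paper's argument. Where you genuinely diverge is the $(h,n)=(3,3)$ case. The paper exploits neutrality and anonymity of both $M$ and $Cop$ together with the Condorcet principle to cut the verification down to four explicit profiles, and then appeals to an undisplayed case-by-case computation. You instead use the structure of the majority tournament: since $h=3$ is odd, $\Gamma_2(p)$ is complete (Lemma~\ref{minimalmu-i}), hence either transitive or a $3$-cycle, and in each pattern you compute $M(p)$ and $Cop(p)$ outright. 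This is more conceptual and, frankly, more complete than the printed proof. The one step you assert without justification --- that no arc of a $3$-cycle in $\Gamma_2(p)$ can be unanimous, so that the arc set of $\Gamma_3(p)$ is empty and $D_3(p)=N$ --- is precisely the final assertion of Lemma~\ref{lemma33}(i), which you could cite, or prove in one line: if all three voters preferred $x$ to $y$ while $y>^p_2 z$ and $z>^p_2 x$, then some voter would prefer $y$ to $z$ and $z$ to $x$, hence $y$ to $x$ by transitivity, a contradiction. With that reference supplied, your argument is airtight, and it buys an explicit, checkable computation where the paper leaves the bookkeeping to the reader.
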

\begin{proof}
If $n=2$, then we surely have $M=Cop$. Assume now that $n\ge 3$. If $(h,n)\not \in T_3$, then, by Theorem \ref{main-new} and Proposition \ref{sym3},  $M\not \in \mathfrak{C}^3$  and $Cop\in \mathfrak{C}^3$ so that $M\neq Cop$. If $(h,n)\in T_3$, then
$(h,n)=(3,3)$. Consider $p\in\mathcal{P}$. Since both $M$ and $Cop$ are neutral, without loss of generality, we can assume that $p_1=[1,2,3]^T$. Recalling that both $M$ and $Cop$ satisfy the Condorcet principle, they surely coincide when a Condorcet winner exists. Thus, we can assume that the alternatives ranked first are different among themselves. Since both $M$ and $Cop$ are anonymous, we can assume that $p_2(1)=2$ and $p_3(1)=3$. That leaves just four cases to treat.
By a case by case computation on those, it can be finally proved that $M=Cop$.
\end{proof}

\section{Proof of Theorem \ref{main-new}}\label{proofs}

From  Proposition \ref{Kramer} we immediately have that Theorem \ref{main-new} is implied by the following three propositions. Their tricky proofs, based on graph theory, are presented in the Sections \ref{TM1-sec}, \ref{TM2-sec} and \ref{TM3-sec}.

\begin{proposition}\label{TM1}
There exist $p\in\mathcal{P}$ and $x\in N$ such that $D_{\mu(p)}(p)=D_{\mu(p^r)}(p^r)=\{x\}$ if and only if $(h,n)\in \mathbb{N}_\diamond^2\setminus T_1$.
\end{proposition}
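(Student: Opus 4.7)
The plan is to translate the statement into the language of the majority graphs $\Gamma_\mu(p)$ introduced in Section \ref{majority graphs}. Since $x>^{p^r}_\mu y$ is equivalent to $y>^p_\mu x$, the graph $\Gamma_\mu(p^r)$ is $\Gamma_\mu(p)$ with every arc reversed. Hence $D_\mu(p)$ is precisely the set of sources (in-degree $0$) of $\Gamma_\mu(p)$, while $D_\mu(p^r)$ is its set of sinks. Consequently $\mu(p)$, respectively $\mu(p^r)$, is the least majority threshold at which $\Gamma_\mu(p)$ admits a source, respectively a sink, and the proposition reduces to asking for which $(h,n)$ there exist a profile $p$ and a vertex $x$ that is simultaneously the unique source of $\Gamma_{\mu(p)}(p)$ and the unique sink of $\Gamma_{\mu(p^r)}(p)$.

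For the ``if'' direction I would exhibit an explicit profile for each $(h,n)\in\mathbb{N}_\diamond^2\setminus T_1$, generalising the $(h,n)=(6,4)$ example of the introduction. The template consists of a block of voters placing a distinguished alternative $1$ first and another block placing $1$ last, with both blocks running through cyclic shifts of a fixed linear order on $\{2,\dots,n\}$. Choosing the block sizes according to the parity of $h$ and a few residue conditions, the critical graph $\Gamma_{\mu(p)}(p)$ reduces to a directed Hamiltonian cycle on $\{2,\dots,n\}$ with $1$ isolated, while every smaller valid threshold yields a cycle spanning all of $N$ that forbids any source or sink. By duality the same profile exhibits $1$ as the unique sink of $\Gamma_{\mu(p^r)}(p)$. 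This portion of the argument splits into the two families treated by Propositions \ref{blacktable-odd} and \ref{blacktable-even}.

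For the ``only if'' direction I must rule out such $(p,x)$ whenever $(h,n)\in T_1$. The easy cases $n\le 3$ or $h\le 3$ follow from a short enumeration: no arc of $\Gamma_\mu(p)$ can be doubled for $\mu>h/2$, so $\Gamma_\mu(p)$ is an orientation of a subgraph of $K_n$; in these small-parameter regimes the possible joint shapes compatible with the unique-source/unique-sink requirement, together with the upper bound $\mu(p),\mu(p^r)\le\mu_G$, all lead to a contradiction. The real content is the four sporadic pairs $(4,4),(5,4),(7,4),(5,5)$. There I plan to argue by contradiction from an assumed $(p,x)$ and exploit the connectivity and acyclicity properties of $\Gamma_\mu(p)$ developed in Section \ref{majority graphs}: uniqueness of the source $x$ in $\Gamma_{\mu(p)}(p)$ forces every $y\ne x$ to have an in-neighbour there, whereas uniqueness of the sink $x$ in $\Gamma_{\mu(p^r)}(p)$ forces every $y\ne x$ to have an out-neighbour there. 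Combined with the sum-to-$h$ duality between the $p$- and $p^r$-majority counts, these constraints yield arithmetic inequalities that fail precisely on the sporadic list.

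The main obstacle will be exactly this sporadic analysis, where the counting inequalities are tight and arc counts alone do not close the argument. In that regime I expect to also invoke the emptiness of $D_{\mu(p)-1}(p)$ and $D_{\mu(p^r)-1}(p^r)$ (whenever those thresholds are still valid), which imposes a covering-cycle structure on the subcritical graphs and clashes with uniqueness at the critical threshold. Normalising $x=1$ by neutrality and partially standardising $p$ by anonymity, the four sporadic pairs should then reduce to a manageable inspection of the admissible joint shapes of $\Gamma_{\mu(p)}(p)$ and $\Gamma_{\mu(p^r)}(p)$.
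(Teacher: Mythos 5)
Your translation into sources and sinks of $\Gamma_\mu(p)$ is exactly the paper's starting point (Lemma \ref{intersection2}), and your ``if'' direction is essentially the paper's: the block construction with a distinguished alternative ranked first by one block and last by the other, sitting over an $(n-1)$-cycle on the remaining alternatives, is precisely Propositions \ref{blacktable-odd} and \ref{blacktable-even}, and the remaining work is only the arithmetic check that every $(h,n)\notin T_1$ with $h,n\ge 4$ satisfies $h\ge \frac{3(n-1)}{n-3}$ ($h$ odd) or $h\ge\frac{2(n-1)}{n-3}$ ($h$ even). That half of your plan is sound.

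The ``only if'' direction, however, has genuine gaps. First, the case $h=3$ with $n\ge 4$ arbitrary cannot be settled by ``a short enumeration'': $n$ is unbounded, so there is no finite list of joint shapes to inspect. The paper handles it by combining Lemma \ref{minimalmu-ii} (for odd $h$, a singleton $D_{\mu_0}(p)$ forces $\mu(p^r)>\mu_0$, hence $\mu(p^r)=3=h\ge\mu_G$) with Lemma \ref{ac} (acyclicity of $\Gamma_h(p^r)$), and then derives a cycle among the non-$x$ vertices from the uniqueness of the source --- a contradiction. Nothing in your plan supplies this mechanism. Second, for the sporadic pairs your hope that ``arithmetic inequalities fail precisely on the sporadic list'' is not how the argument closes; you concede arc counts alone are insufficient, and the ``covering-cycle structure clashing with uniqueness'' is exactly the unproved core. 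The paper's resolution is structural, not arithmetic: for these pairs the acyclicity threshold $\mu_a$ sits just above $\mu_0$, so either both $\mu(p),\mu(p^r)\ge\mu_a$ (then both critical graphs are acyclic and Lemma \ref{four}(iii) kills the configuration), or $\mu(p)=\mu_0$ with $h$ odd, whence $\mu(p^r)\ge\mu_a$, $\Gamma_{\mu(p^r)}(p^r)$ is acyclic hence connected (Corollary \ref{cor-connected}, using that $D_{\mu(p^r)}(p^r)$ is a singleton), and yet $x$ is forced to be isolated in it --- contradiction. To complete your proof you would need to identify this dichotomy on $\mu(p)$ relative to $\mu_a$ and the isolation-versus-connectedness clash explicitly; as written, the hardest cases are deferred to an ``inspection'' whose feasibility is not established.
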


\begin{proposition}\label{TM2}
There exist $p\in\mathcal{P}$ and $x\in N$ such that $D_{\mu(p)}(p)=\{x\}\subseteq D_{\mu(p^r)}(p^r)$ if and only if $(h,n)\in \mathbb{N}_\diamond^2\setminus T_2$.	
\end{proposition}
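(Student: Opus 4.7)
The plan is to prove both implications separately, exploiting the strict inclusion $T_2\subsetneq T_1$ in order to piggyback on Proposition \ref{TM1}. For every $(h,n)\in\mathbb{N}_\diamond^2\setminus T_1$, that proposition already furnishes a profile $p$ satisfying the stronger equality $D_{\mu(p)}(p)=D_{\mu(p^r)}(p^r)=\{x\}$, which a fortiori witnesses the inclusion required here. Consequently, the ``if'' direction reduces to exhibiting explicit profiles only for the residual set $T_1\setminus T_2=\{(3,n):n\geq 4\}\cup\{(5,4),(7,4),(5,5)\}$, while the ``only if'' direction requires ruling out the configuration for every $(h,n)\in T_2=\{h=2\}\cup\{n\leq 3\}\cup\{(4,4)\}$.

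For the existence half, I would begin with $h=3$, $n\geq 4$. The construction I have in mind puts voters $1,2$ with alternative $1$ at the top and voter $3$ with alternative $1$ at the bottom, choosing the relative rankings on $N\setminus\{1\}$ so that, restricted to $\{2,3,4\}$, the three voters' rankings in $p^r$ realise the classical Condorcet $3$-cycle. A direct tally then gives $D_2(p)=\{1\}$ (since $1$ is the Condorcet winner of $p$), so $\mu(p)=2$; the $3$-cycle forces $D_2(p^r)=\varnothing$, hence $\mu(p^r)=\mu_G=3$ by \eqref{Greenberg}; and since voter $3$ top-ranks $1$ in $p^r$, no alternative can unanimously beat $1$ in $p^r$, giving $1\in D_3(p^r)$. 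For $n>4$ I would extend the construction by embedding the extra alternatives into voter $3$'s ranking above $1$ and arranging them among voters $1,2$ so as to preserve $D_2(p^r)=\varnothing$. The three sporadic pairs $(5,4),(7,4),(5,5)$ I would handle by explicit ad-hoc profiles tailored to the arithmetic of their majority thresholds, using the majority-graph formalism of Section \ref{majority graphs} to guide the search.

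For the non-existence half, the case $h=2$ is immediate: $D_2(p)$ coincides with the Pareto-maximal set of $p$, so $D_2(p)=\{x\}$ forces $x$ to be top-ranked by both voters; in $p^r$ the alternative $x$ is then bottom-ranked by both voters, hence Pareto-dominated, and $x\notin D_2(p^r)=D_{\mu(p^r)}(p^r)$. The case $n\leq 3$ reduces to a short enumeration: with at most three alternatives the shapes of the graphs $\Gamma_\mu(p)$ and $\Gamma_\mu(p^r)$ are severely constrained, and one checks directly that $D_{\mu(p)}(p)=\{x\}$ precludes $x\in D_{\mu(p^r)}(p^r)$. The truly delicate subcase is $(h,n)=(4,4)$, where $\mu_0=\mu_G=3$ and one is forced to analyse the tournament $\Gamma_3(p)$ in detail. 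Here I would enumerate the connectivity and cycle patterns of $\Gamma_3(p)$ compatible with $D_3(p)=\{x\}$ and show that in each of them there exists an alternative $y$ with $y>_3^{p^r}x$, contradicting $x\in D_3(p^r)$.

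The main obstacle is precisely the $(4,4)$ subcase of the converse. No Pareto-type or elementary counting argument applies; the coincidence $\mu_0=\mu_G=3$ leaves no arithmetical slack to exploit; and the space of $4$-voter, $4$-alternative tournaments is rich enough to demand a careful combinatorial treatment. This is exactly where the graph-theoretic apparatus developed in Sections \ref{graph-sec}--\ref{majority graphs} becomes indispensable, and I expect the proof to mirror closely the harder direction of Proposition \ref{TM1}, suitably adapted to the weaker inclusion $\{x\}\subseteq D_{\mu(p^r)}(p^r)$ in place of the equality $\{x\}=D_{\mu(p^r)}(p^r)$.
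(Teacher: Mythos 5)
Your overall architecture matches the paper's: use Proposition \ref{TM1} for $(h,n)\notin T_1$, build explicit profiles on $T_1\setminus T_2$, and rule out the configuration on $T_2$; your $h=3$, $n\ge 4$ construction is essentially the paper's, and your Pareto argument for $h=2$ is a correct (and more elementary) replacement for the paper's. But the proposal has genuine gaps. First, the three sporadic pairs $(5,4)$, $(5,5)$, $(7,4)$ are simply deferred to ``ad-hoc profiles''; these are not routine, since each requires a profile with $\mu(p)=\mu_0$, $D_{\mu(p)}(p)=\{1\}$, $\mu(p^r)>\mu_0$ and $1\in D_{\mu(p^r)}(p^r)$, and the paper must exhibit them explicitly. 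Without them the ``if'' direction is unproved on exactly the set where Proposition \ref{TM1} gives nothing.

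Second, and more seriously, your plan for the converse at $(4,4)$ contains both a factual error and a logical hole. You assert $\mu_G=3$ there, but $\mu_G=\min\{m\in(2,4]:m>3\}=4$; the threshold that actually matters is the acyclicity threshold $\mu_a=3=\mu_0$, and once one notices this, Corollary \ref{soglia-green} makes $\Gamma_{\mu(p)}(p)$ and $\Gamma_{\mu(p^r)}(p^r)$ acyclic for free, after which Lemmata \ref{four}(i) and \ref{intersection3}(ii) finish all of $T_2$ uniformly (for $n\le 3$ one has $\mu_a=\mu_0$ as well, and for $h=2$ the only threshold is $2=\mu_a$). So the case you single out as ``truly delicate'' is in fact a one-line application of the machinery you cite, and an elementary counting argument (a $3$-cycle at threshold $3$ would need $9$ voter--pair agreements while $4$ voters supply at most $8$) does apply, contrary to your claim. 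The logical hole is this: you propose to derive a contradiction from some $y$ with $y>_3^{p^r}x$, but since $D_3(p^r)\subseteq D_4(p^r)$, this only contradicts $x\in D_{\mu(p^r)}(p^r)$ if $\mu(p^r)=3$; nothing in your sketch establishes $\mu(p^r)\le\mu(p)$, which is precisely the content of Lemma \ref{four}(i) and is needed before Lemma \ref{intersection3}(ii) can be invoked. The same synchronization issue is glossed over in your ``short enumeration'' for $n=3$, which in any case cannot be a literal enumeration since $h$ is unbounded.
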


\begin{proposition}\label{TM3}
There exists $p\in\mathcal{P}$ such that $D_{\mu(p)}(p)\neq N$ and $D_{\mu(p)}(p)\cap D_{\mu(p^r)}(p^r)\neq\varnothing$ if and only if $(h,n)\in \mathbb{N}_\diamond^2\setminus T_3$.
\end{proposition}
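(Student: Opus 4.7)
My plan is to apply Proposition \ref{Kramer} to recast the statement as an iff about the reversal bias of type 3 for $M$, and then prove the two directions separately using the language and tools developed in Section \ref{majority graphs}.

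For the easier direction --- that $(h,n) \in T_3$ forbids the phenomenon --- I split into the cases $n = 2$ and $(h,n) = (3,3)$. When $n = 2$, $M$ coincides with simple majority, so if $M(p) \neq N$ then $M(p) = \{x\}$ for the strict majority winner and $M(p^r) = \{y\}$ for the other alternative, giving disjoint outcomes. For $(h,n) = (3,3)$ I case on $\mu(p) \in \{2, 3\}$: if $\mu(p) = 2$, the unique strict Condorcet winner $x$ (uniqueness follows from $h = 3$ being odd) becomes the unique strict Condorcet loser in $p^r$, while the residual two-alternative comparison flips so that the loser of $p$ becomes the strict Condorcet winner of $p^r$, yielding $M(p) \cap M(p^r) = \varnothing$; if $\mu(p) = 3$, a short transitivity argument shows that any unanimous pairwise defeat $a >_3 b$ would force the third alternative $c$ to beat both $a$ and $b$ by majority, contradicting $D_2(p) = \varnothing$, so $D_3(p) = N$ and $M(p) = N$ is excluded by hypothesis.

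For the harder direction --- producing a witness whenever $(h,n) \notin T_3$ --- the guiding principle is to pick an alternative $x$ whose pairwise count $|\{i : y >_{p_i} x\}|$ lies in the interval $[h - \mu(p^r) + 1, \mu(p) - 1]$ for every $y \neq x$, which is exactly the algebraic condition ensuring $x \in D_{\mu(p)}(p) \cap D_{\mu(p^r)}(p^r)$, while simultaneously forcing a strong defeat elsewhere so that $|M(p)| < n$. Concrete witnesses: for $h = 2$ and $n \geq 3$, the profile $p_1 = [1, 2, \ldots, n]^T$, $p_2 = [2, 3, \ldots, n, 1]^T$ makes $1$ tie with every other alternative and yields $M(p) = \{1, 2\}$ and $M(p^r) = \{1, n\}$; for $h = 3$ and $n = 4$, take $1$ as a strict Condorcet winner by $2$-$1$ margins against a $2$-$1$ majority cycle on $\{2, 3, 4\}$, giving $M(p) = \{1\}$ and $M(p^r) = N$; for $h = 3$ and $n \geq 5$, place $1$ at a balanced middle rank in each voter and arrange the remaining alternatives cyclically to create at least one unanimous pairwise defeat; for larger $h$, analogous constructions separating even and odd $h$, where even $h$ admits a clean ``$h/2$-to-$h/2$ tie around $x$'' template and odd $h$ requires arranging the profile so that $\mu(p) + \mu(p^r) \geq h + 2$.

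The main technical difficulty is the odd-$h$ sufficiency for small $n$: the balance condition pins each count involving $x$ to a narrow interval, and any attempt to inject a strong defeat among the other alternatives tends to create a Condorcet winner that ejects $x$ from $M(p)$. Resolving this requires combining a cycle of minimal-margin majority defeats with one strictly larger defeat, and verifying through the majority graph $\Gamma_{\mu}(p)$ of Section \ref{majority graphs} that no Condorcet winner appears in either $p$ or $p^r$; the graph-theoretic language makes this verification systematic and replaces what would otherwise be a tedious enumeration of subcases.
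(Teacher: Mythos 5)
Your immunity direction is essentially correct and matches the paper's in substance. The $n=2$ case can be handled directly as you do (the paper instead observes that $D_{\mu(p)}(p)\neq N$ forces $|D_{\mu(p)}(p)|=1$ when $n=2$ and invokes Proposition \ref{TM2}), and your two subcases for $(3,3)$ --- when $\mu(p)=2$, a unique $2$-majority winner in $p$ whose reversal has a different unique $2$-majority winner (the Condorcet loser of $p$), hence disjoint outcomes; when $\mu(p)=3$, $\Gamma_2(p)$ is forced to be a $3$-cycle with $\Gamma_3(p)$ arc-free, hence $M(p)=N$ --- are exactly the content of the paper's Lemma \ref{lemma33} combined with Lemma \ref{minimalmu-i}.

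The existence direction, however, has a genuine gap. You actually construct and verify witnesses only for $h=2$, $n\ge 3$ (your profile does work: $M(p)=\{1,2\}$, $M(p^r)=\{1,n\}$) and for $h=3$, $n=4$; everything else is deferred to ``analogous constructions,'' and you explicitly flag, without resolving, a ``main technical difficulty'' for odd $h$ and small $n$. The unconstructed cases include precisely those that make this proposition strictly stronger than Proposition \ref{TM2}: the entire family $n=3$, $h\ge 4$, and the pair $(4,4)$. Note that for $n=3$ no witness with $|D_{\mu(p)}(p)|=1$ can exist at all (a singleton meeting $D_{\mu(p^r)}(p^r)$ would be contained in it, contradicting Proposition \ref{TM2} since $(h,3)\in T_2$), so one must exhibit profiles with $|D_{\mu(p)}(p)|=2$ sharing an element with $D_{\mu(p^r)}(p^r)$; the paper does this with three explicit families of profiles depending on the residue of $h$ modulo $3$, plus an ad hoc profile for $(4,4)$. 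Moreover, for all $(h,n)\notin T_2$ the paper simply cites Proposition \ref{TM2}, since a profile with $D_{\mu(p)}(p)=\{x\}\subseteq D_{\mu(p^r)}(p^r)$ is a fortiori a witness here; this disposes of your ``larger $h$'' even/odd templates at no cost, whereas your plan rebuilds them from scratch and, as written, never carries the construction out. Until the $n=3$, $h\ge 4$ and $(4,4)$ witnesses are explicitly produced and verified, the proof is incomplete.
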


\subsection{Graphs}\label{graph-sec}

In this section, we recall some basic facts and notation from graph theory, which we are going to use in the sequel\footnote{All unexplained notation is standard. See, for instance, Diestel (2010).}. All the considered graphs are directed.
A {\it graph} is a pair $(V,A)$, where $V$ is a nonempty set called vertex set and $A$ is a subset of $\{(x,y)\in V^2: x\neq y\}$ called arc set. Note that if $\Gamma=(V,A)$ is a graph and $|V|=1$, then $A=\varnothing$.
Given two graphs $\Gamma_1=(V_1,A_1)$ and $\Gamma_2=(V_2,A_2)$, we say that $\Gamma_2$ is a {\it subgraph} of $\Gamma_1$ if $V_2\subseteq V_1$ and $A_2\subseteq A_1$. If $\Gamma_2$ is a subgraph of $\Gamma_1$, we write $\Gamma_2\leq \Gamma_1$.

Let now  $\Gamma=(V,A)$ be a graph. $\Gamma$ is called {\it complete} if for every $x,y\in V$ with $x\neq y$, we have $(x,y)\in A$ or $(y,x)\in A$.
We say that $x\in V$ is {\it maximal} [{\it minimal}] for $\Gamma$ if there exists no $y\in V$ such that $(y,x)\in A$  $[(x,y)\in A]$. We denote by $\max(\Gamma)$ [$\min(\Gamma)$] the set of maximal [minimal] vertices for $\Gamma$.
Note that those sets may be empty. We say that $x\in V$ is a {\it maximum} [{\it minimum}] of $\Gamma$ if, for every $y\in V\setminus \{x\}$, we have that $(x,y)\in A$ $[(y,x)\in A]$\footnote{Note that if $x$ is a maximum [minimum] of $\Gamma$ it is not necessarily maximal [minimal] for $\Gamma$. In fact, given $\Gamma=(\{1,2\},\{(1,2),(2,1)\})$, we have that  $1$ and $2$ are both a maximum [minimum] but none of them is maximal [minimal].}.  We denote by $\mathrm{Max}(\Gamma)$ [$\mathrm{Min}(\Gamma)$] the set of maxima [minima] of $\Gamma$. We say that $x\in V$ is {\it isolated} in $\Gamma$ if, for every $y\in V\setminus\{x\}$, $(x,y), (y,x)\not\in A$. We denote by $\mathrm{I}(\Gamma)$ the set of the isolated vertices of $\Gamma$.
It is useful to note that
\begin{equation}\label{intersection}
\max(\Gamma)\cap \min (\Gamma)=\mathrm{I}(\Gamma).
\end{equation}
Note also that if $x\in\mathrm{Max}(\Gamma)\cup \mathrm{Min}(\Gamma)$ and $|V|\geq 2$, then $x\not\in \mathrm{I}(\Gamma).$

$\Gamma$ is said to be {\it connected} if, for every $x, y\in V$ with $x\neq y$, there exist $k\ge 2$ and an ordered sequence $x_1,\ldots, x_k$ of distinct elements of $V$ such that $x_1=x$, $x_k=y,$ and, for every $j\in\{1,\ldots,k-1\}$, $(x_j,x_{j+1})\in A$ or $(x_{j+1},x_{j})\in A$. Note that if $\Gamma$ has a maximum [minimum], then $\Gamma$ is connected.
It is well known that there exist a uniquely determined $c\in \mathbb{N}$ and  connected subgraphs
$\Gamma_1=(V_1,A_1),\ldots,\Gamma_c=(V_c,A_c)$ of $\Gamma$ such that $\cup_{i=1}^cV_i=V$, $\cup_{i=1}^cA_i=A$, and for every $i,j\in\{1,\ldots,c\}$ with $i\neq j$,  $V_i\cap V_j=A_i\cap A_j=\varnothing$. Those subgraphs $\Gamma_1,\dots, \Gamma_c$ are called the {\it connected components} of $\Gamma$. They are maximal among the connected subgraphs of $\Gamma$, that is, if $\Gamma'\leq \Gamma$ is connected and $\Gamma'\geq \Gamma_i$ for some $i\in\{1,\ldots,c\},$ then $\Gamma'= \Gamma_i.$ In particular, for every $i\in\{1,\ldots,c\}$, $x\in V_i$ and $y\in V\setminus V_i$ imply $(x,y),(y,x)\notin A$; $x,y\in V_i$ and $(x,y)\in A$ imply $(x,y)\in A_i.$ Note that $x\in N$ is isolated in $\Gamma$ if and only if the connected component of $\Gamma$ containing $x$ is $(\{x\},\varnothing).$
Given $l\ge 2$, $\Gamma$ is said to be a {\it $l$-cycle} if $|V|=l$ and there exists an ordered sequence $x_1,\ldots, x_l$ of the elements of $V$ such that, once defined  $x_{l+1}=x_1$, we have that $A=\{(x_j,x_{j+1})\ :1\leq j\leq l\}$. $\Gamma$ is said to be  a {\it cycle} if it is a $l$-cycle for some $l\ge 2$.
Fixed $l\ge 2$, $\Gamma$ is said to be {\it $l$-cyclic} if there exists a $l$-cycle $\Gamma_1\le \Gamma$, and {\it $l$-acyclic} otherwise.
$\Gamma$ is said to be {\it acyclic} if it is $l$-acyclic for all $l\ge 2$. Note that if $|V|=1$, then $\Gamma$ is acyclic.

\subsection{Majority graphs and their properties}\label{majority graphs}

Let $p\in \mathcal{P}$ and $\mu\in \mathbb{N}\cap (h/2,h].$
In a natural way, we associate with the relation on $N$ given by $\Sigma_{\mu}(p)=\{(x,y)\in N\times N: x>_\mu^p y\}$, the graph $\Gamma_{\mu}(p)=(N,\Sigma_{\mu}(p))$, called the {\it $\mu$-majority graph} of  $p$. Note that,  if $\mu, \mu'\in \mathbb{N}\cap (h/2,h]$ with $ \mu'\le \mu$, then $\Gamma_{\mu}(p)\leq \Gamma_{\mu'}(p).$ In particular, $\Gamma_{\mu}(p)\leq \Gamma_{\mu_0}(p)$ holds for all $\mu\in \mathbb{N}\cap (h/2,h].$ The concept of majority graph has been considered by many authors essentially in relation to the case when $h$ is odd and $\mu=\mu_0=\frac{h+1}{2}$ (see, for instance, Miller (1977)).
For the purpose of our paper that case is interesting because $\Gamma_{\frac{h+1}{2}}(p)$ is complete (see Lemma \ref{minimalmu-i}), but we are not focussed only on that particular majority graph.

The properties of the relation $\Sigma_{\mu}(p)$ translates easily into graph theoretical properties for $\Gamma_{\mu}(p)$. Moreover, considering $\Gamma_{\mu}(p)$ we gain the advantage of using concepts like $l$-acyclicity and connectedness which typically belongs to graph theory. That gives very soon
a better comprehension of the  sets $D_{\mu}(p)$ and  $D_{\mu(p)}(p)=M(p)$.

\begin{lemma}\label{intersection2}
Let $\mu\in \mathbb{N}\cap (h/2,h]$ and $p\in \mathcal{P}$. Then $D_\mu(p)=\max(\Gamma_\mu(p))=\min(\Gamma_\mu(p^{r}))$. Moreover  $ D_{\mu}(p)\cap D_{\mu}(p^{r})=\mathrm{I}(\Gamma_{\mu}(p))=\mathrm{I}(\Gamma_{\mu}(p^{r})).$
\end{lemma}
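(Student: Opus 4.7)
The proof should be a direct unpacking of definitions, so I would not expect any real obstacle; the value of the lemma is in translating social-choice content into graph language, not in any difficulty of argument. I plan three short steps.

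First, I would establish $D_\mu(p)=\max(\Gamma_\mu(p))$ by tracing definitions: $x\in\max(\Gamma_\mu(p))$ means there is no $y\in N$ with $(y,x)\in\Sigma_\mu(p)$, which by the definition of $\Sigma_\mu(p)$ is the statement that no $y$ satisfies $|\{i\in H:y>_{p_i}x\}|\ge\mu$, i.e.\ that for all $y\in N$, $|\{i\in H:y>_{p_i}x\}|<\mu$. This is precisely membership in $D_\mu(p)$.

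Second, I would use the observation already recorded in the preliminaries, namely that $x>_\mu^p y$ is equivalent to $y>_\mu^{p^r}x$. Consequently $(x,y)\in\Sigma_\mu(p)$ iff $(y,x)\in\Sigma_\mu(p^r)$, so $\Gamma_\mu(p^r)$ is the arc-reversal of $\Gamma_\mu(p)$. Hence a vertex is maximal in $\Gamma_\mu(p)$ if and only if it is minimal in $\Gamma_\mu(p^r)$, giving $\max(\Gamma_\mu(p))=\min(\Gamma_\mu(p^r))$. Combined with the first step, this yields the first chain of equalities in the statement.

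Third, applying the first chain of equalities with $p^r$ in place of $p$ (and using $(p^r)^r=p$) gives $D_\mu(p^r)=\max(\Gamma_\mu(p^r))=\min(\Gamma_\mu(p))$. Therefore
\[
D_\mu(p)\cap D_\mu(p^r)=\max(\Gamma_\mu(p))\cap\min(\Gamma_\mu(p)),
\]
which by formula \eqref{intersection} equals $\mathrm{I}(\Gamma_\mu(p))$. Swapping the roles of $p$ and $p^r$ throughout the same computation gives the analogous identity $D_\mu(p)\cap D_\mu(p^r)=\mathrm{I}(\Gamma_\mu(p^r))$, completing the proof. The only ``care'' needed is to keep the duality $p\leftrightarrow p^r$ straight when transporting maxima into minima; once that is observed, everything reduces to \eqref{intersection}.
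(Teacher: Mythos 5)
Your proposal is correct and follows essentially the same route as the paper's proof: unpack the definitions to get $D_\mu(p)=\max(\Gamma_\mu(p))=\min(\Gamma_\mu(p^{r}))$, apply the same chain to $p^r$ using $(p^r)^r=p$, and conclude via the identity \eqref{intersection}. You merely spell out in more detail the definitional steps that the paper compresses into ``follow from the definitions of $D_\mu(p)$ and $p^{r}$.''
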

\begin{proof}
The equalities
$D_\mu(p)=\max(\Gamma_\mu(p))=\min(\Gamma_\mu(p^{r}))$ follow from the definitions of $D_\mu(p)$ and $p^{r}$. As a consequence, since $(p^r)^r=p$, we also have
$D_{\mu}(p^{r})=\max(\Gamma_{\mu}(p^{r}))=\min (\Gamma_{\mu}(p))$, so that \eqref{intersection} completes the proof.
\end{proof}

\begin{lemma}\label{maximum}
Let $\mu\in \mathbb{N}\cap (h/2,h]$ and $p\in \mathcal{P}$.  Then $\Gamma_{\mu}(p)$ is $2$-acyclic and $\Gamma_{\mu}(p)$ has at most one maximum. Moreover, if $\Gamma_{\mu}(p)$ has a maximum $x\in N$, then $D_{\mu}(p)= \mathrm{Max}(\Gamma_{\mu}(p))=\{x\}$.
\end{lemma}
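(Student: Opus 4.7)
The proof naturally splits into the three assertions, and all three reduce to the single observation that a $2$-cycle in $\Gamma_\mu(p)$ is impossible because the threshold $\mu$ strictly exceeds $h/2$.

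First I would establish $2$-acyclicity. Suppose, for contradiction, that $(x,y),(y,x) \in \Sigma_\mu(p)$ for some distinct $x,y \in N$. Since each $p_i$ is a linear order, the sets $\{i \in H : x >_{p_i} y\}$ and $\{i \in H : y >_{p_i} x\}$ partition $H$, so their cardinalities sum to $h$. But the existence of the two arcs forces both cardinalities to be at least $\mu$, whence $2\mu \le h$, contradicting $\mu > h/2$.

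Next I would deduce that $\Gamma_\mu(p)$ has at most one maximum. Indeed, if $x \neq y$ were both maxima, then, by definition of maximum, $(x,y) \in \Sigma_\mu(p)$ and $(y,x) \in \Sigma_\mu(p)$, producing a $2$-cycle and contradicting what was just proved.

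Finally, assume $x \in N$ is a maximum of $\Gamma_\mu(p)$. By the previous step $\mathrm{Max}(\Gamma_\mu(p)) = \{x\}$. To obtain $D_\mu(p) = \{x\}$, by Lemma \ref{intersection2} it suffices to show $\max(\Gamma_\mu(p)) = \{x\}$. That $x$ is maximal follows from $2$-acyclicity: if $(y,x) \in \Sigma_\mu(p)$ for some $y \neq x$, then $(x,y) \in \Sigma_\mu(p)$ too (since $x$ is a maximum), giving a $2$-cycle. Conversely, any $y \in N \setminus \{x\}$ satisfies $(x,y) \in \Sigma_\mu(p)$, hence is not maximal. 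Therefore $\max(\Gamma_\mu(p)) = \{x\} = \mathrm{Max}(\Gamma_\mu(p))$, which completes the proof.

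There is no substantial obstacle here: the whole argument rests on the elementary counting fact that two disjoint subsets of $H$ cannot each have more than $h/2$ elements. The only point that deserves care is remembering to invoke Lemma \ref{intersection2} to translate the maximality statement into a statement about $D_\mu(p)$, rather than proving the identification from scratch.
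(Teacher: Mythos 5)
Your proof is correct and follows essentially the same route as the paper's: a $2$-cycle is impossible since two disjoint subsets of $H$ cannot both have at least $\mu>h/2$ elements, two distinct maxima would create such a cycle, and the identification $D_\mu(p)=\max(\Gamma_\mu(p))$ from Lemma \ref{intersection2} together with the $2$-acyclicity yields the final claim. The only difference is cosmetic: you spell out the counting argument for $2$-acyclicity, which the paper dispatches with ``follows immediately from $\mu>h/2$.''
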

\begin{proof}
The $2$-acyclicity follows immediately from $\mu>h/2$.

Assume, by contradiction, that there exist distinct $ x, y\in \mathrm{Max}(\Gamma_{\mu}(p))$. Then $(x,y)\in \Sigma_\mu(p)$ and $(y,x)\in \Sigma_\mu(p)$, so that $\Gamma_1=(\{x,y\},\{(x,y),(y,x)\})\le \Gamma_{\mu}(p)$. Since $\Gamma_1$ is a $2$-cycle, that contradicts the fact that $\Gamma_{\mu}(p)$ is $2$-acyclic.

Assume now that there exists $x\in \mathrm{Max}(\Gamma_{\mu}(p))$ and show that $x\in \max(\Gamma_{\mu}(p))=D_\mu(p)$. By contradiction, let $x\not\in \max(\Gamma)$. Then there is $y\in N$ such that $(y,x)\in \Sigma_\mu(p)$. Since also $(x,y)\in \Sigma_\mu(p)$, the $2$-cycle $\Gamma_1=(\{x,y\},\{(x,y),(y,x)\})$ is a subgraph of $\Gamma_\mu(p)$ and the contradiction is found. We complete the proof simply noticing that, being $x$ a maximum of $\Gamma_\mu(p)$, for every $y\in N\setminus \{x\}$, we have that $(x,y)\in \Sigma_\mu(p)$ so that $y\not\in \max(\Gamma_\mu(p))$.
\end{proof}

\begin{lemma}\label{ac} Let $\mu\in \mathbb{N}\cap (h/2,h]$. Then $\Gamma_{\mu}(p)$ is acyclic for all $p\in \mathcal{P}$ if and only if $\mu\geq \mu_G.$ In particular $\Gamma_{h}(p)$ is acyclic for all $p\in \mathcal{P}.$
\end{lemma}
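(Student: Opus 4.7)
The plan is to exploit \eqref{Greenberg} together with Lemma \ref{intersection2} to translate the question about acyclicity of $\Gamma_{\mu}(p)$ into the question about non-emptiness of $D_{\mu}(p)$, which \eqref{Greenberg} already answers quantitatively. Recall that, by definition of $\mu_{G}$, the condition $\mu\ge \mu_{G}$ is equivalent to $\mu > \frac{n-1}{n}h$ when $\mu\in \mathbb{N}\cap(h/2,h]$. I will argue the two implications separately and then deduce the ``in particular'' clause at the end.

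For the ``only if'' direction I would proceed by contrapositive. Assume $\mu<\mu_{G}$, so $\mu\le \frac{n-1}{n}h$. By \eqref{Greenberg} there exists $p\in\mathcal{P}$ with $D_{\mu}(p)=\varnothing$, and Lemma \ref{intersection2} translates this into $\max(\Gamma_{\mu}(p))=\varnothing$. Starting from any $x_{1}\in N$ and iteratively choosing $x_{k+1}$ with $(x_{k+1},x_{k})\in \Sigma_{\mu}(p)$ (which is possible at every step because no vertex is maximal), the finiteness of $N$ forces some vertex to repeat, and hence produces a cycle in $\Gamma_{\mu}(p)$; thus $\Gamma_{\mu}(p)$ is not acyclic.

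For the ``if'' direction, again by contrapositive, suppose some $\Gamma_{\mu}(p)$ contains a cycle $x_{1}\to x_{2}\to\cdots\to x_{l}\to x_{1}$, and set $S=\{x_{1},\dots,x_{l}\}\subseteq N$ with $|S|=l\le n$. Let $p'\in \mathcal{L}(S)^{h}$ be the profile obtained by restricting each $p_{i}$ to $S$. The key observation is that, for any two alternatives $x,y\in S$, the set $\{i\in H: x>_{p'_{i}} y\}$ coincides with $\{i\in H: x>_{p_{i}} y\}$, so the $\mu$-majority relation on $S$ inherited from $p'$ still contains the cycle. Consequently each $x_{j}$ is dominated inside $S$ with respect to $p'$, giving $D_{\mu}(p')=\varnothing$ in the reduced problem with alternative set $S$. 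Applying \eqref{Greenberg} in that reduced setting (with $l$ in place of $n$) yields $\mu \le \frac{l-1}{l}h \le \frac{n-1}{n}h$, whence $\mu<\mu_{G}$, as required. The main obstacle here is the restriction step, but it is actually routine: one only needs to check that pairwise majority counts depend exclusively on the two alternatives compared, so restricting the ambient set of alternatives cannot alter them.

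Finally, the ``in particular'' statement is immediate: since $n\ge 2$ we have $h>\frac{n-1}{n}h$, so $h\in \mathbb{N}\cap(h/2,h]$ satisfies $h\ge \mu_{G}$, and the first part of the lemma gives the acyclicity of $\Gamma_{h}(p)$ for every $p\in \mathcal{P}$.
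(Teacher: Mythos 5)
Your proof is correct, but it takes a genuinely different route from the paper. The paper disposes of Lemma \ref{ac} in one line, citing Propositions 6 and 7 of Bubboloni and Gori (2014) --- the same external results that later underpin Proposition \ref{green}. You instead derive the lemma from Greenberg's equivalence \eqref{Greenberg}, which the paper does record explicitly, combined with two elementary observations: (a) via Lemma \ref{intersection2}, $D_{\mu}(p)=\varnothing$ means $\Gamma_{\mu}(p)$ has no maximal vertex, and a finite digraph in which every vertex has an in-neighbour must contain a cycle (your iteration is sound: arcs have distinct endpoints, so the first repeated vertex closes a cycle of length at least $2$); and (b) pairwise majority counts depend only on the two alternatives compared, so a cycle on a vertex set $S$ with $|S|=l$ survives restriction to $S$, forces $D_{\mu}(p')=\varnothing$ in the reduced problem, and Greenberg's bound applied there gives $\mu\le\frac{l-1}{l}h\le\frac{n-1}{n}h$. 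The restriction step in (b) is the same device the paper itself uses in the proof of Proposition \ref{green}, and it implicitly assumes \eqref{Greenberg} holds for an arbitrary number of alternatives, which is indeed how Greenberg's Corollary 3 is stated. What your approach buys is a proof that is self-contained relative to what is actually printed in the paper (everything traces back to equation \eqref{Greenberg} and definitions), at the cost of essentially re-deriving the acyclicity half of Proposition \ref{green}; the paper's approach is shorter but entirely opaque without consulting the 2014 reference.
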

\begin{proof}
It is an immediate consequence of Proposition $6$ and $7$ in Bubboloni and Gori (2014).
\end{proof}

\begin{lemma}\label{minimalmu-i} If $h$ is odd, then, for every $p\in \mathcal{P}$, $\Gamma_{\mu_0}(p)$  is complete and $\mathrm{I}(\Gamma_{\mu_0}(p))=\varnothing$. Moreover, if $D_{\mu_0}(p)\neq \varnothing$ then  $\mu(p)=\mu_0$, $\Gamma_{\mu_0}(p)$ admits maximum $x\in N$ and $D_{\mu_0}(p)=\{x\}$.
\end{lemma}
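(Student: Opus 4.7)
The plan is to unpack everything from the parity of $h$ and then bootstrap via Lemmas \ref{intersection2} and \ref{maximum}.

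First, I would show completeness. Since $h$ is odd, $\mu_0 = \lceil (h+1)/2\rceil = (h+1)/2$. Take distinct $x,y \in N$. Because each $p_i$ is a linear order, exactly one of $x>_{p_i}y$, $y>_{p_i}x$ holds, so
\[
|\{i \in H : x>_{p_i} y\}| + |\{i \in H : y>_{p_i} x\}| = h.
\]
Since $h$ is odd, one of these two summands is at least $(h+1)/2 = \mu_0$; hence either $(x,y) \in \Sigma_{\mu_0}(p)$ or $(y,x) \in \Sigma_{\mu_0}(p)$. Thus $\Gamma_{\mu_0}(p)$ is complete. Then, because $n \geq 2$, for every vertex $x$ there is some $y \neq x$ connected to $x$ by an arc (in one of the two directions), so no vertex is isolated, i.e.\ $\mathrm{I}(\Gamma_{\mu_0}(p)) = \varnothing$.

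Next, I would address the ``moreover'' clause. Assume $D_{\mu_0}(p) \neq \varnothing$. By definition of $\mu(p)$ as a minimum over $\mathbb{N}\cap(h/2,h]$, and since $\mu_0$ is the minimal majority threshold, we have $\mu_0 \leq \mu(p)$ always, while $D_{\mu_0}(p) \neq \varnothing$ gives $\mu(p) \leq \mu_0$. Hence $\mu(p) = \mu_0$. Now pick any $x \in D_{\mu_0}(p)$. By Lemma \ref{intersection2}, $x \in \max(\Gamma_{\mu_0}(p))$, i.e.\ no $y \in N$ satisfies $(y,x) \in \Sigma_{\mu_0}(p)$. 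But completeness of $\Gamma_{\mu_0}(p)$ forces, for every $y \neq x$, that at least one of $(x,y),(y,x)$ lies in $\Sigma_{\mu_0}(p)$; since the latter is excluded, we must have $(x,y) \in \Sigma_{\mu_0}(p)$ for every $y \in N \setminus \{x\}$. This exactly says that $x$ is a maximum of $\Gamma_{\mu_0}(p)$.

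Finally, applying Lemma \ref{maximum} to this maximum $x$, we get $D_{\mu_0}(p) = \mathrm{Max}(\Gamma_{\mu_0}(p)) = \{x\}$, which completes the proof. No step is really an obstacle here; the only non-formal ingredient is the parity argument that makes $\Gamma_{\mu_0}(p)$ complete, and everything else is a clean chaining of the already-established lemmas.
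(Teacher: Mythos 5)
Your proof is correct and follows essentially the same route as the paper's: the parity of $h$ forces completeness of $\Gamma_{\mu_0}(p)$ (the paper phrases this as a contradiction, you argue it directly, but it is the same counting), and then membership in $D_{\mu_0}(p)$ plus completeness upgrades maximality to being a maximum, after which Lemma \ref{maximum} gives the singleton. No gaps.
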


\begin{proof} Let us fix $p\in\mathcal{P}$ and note that, being $h$ odd, we have $\mu_0=\frac{h+1}{2}$.
Assume now, by contradiction, that there exist $x,y\in N$ with $x\not>^p_{\mu_0}y$ and $y\not>^p_{\mu_0} x$. Then, we get the impossible relation
\[
h=|\{i\in H : x>_{p_i}y\}|+|\{i\in H : y>_{p_i}x\}|\leq \mu_0 -1+\mu_0 -1=2\left(\frac{h+1}{2}\right)-2=h-1.
\]
 Thus  $\Gamma_{\mu_0}(p)$  is complete and, as an immediate consequence, $\mathrm{I}(\Gamma_{\mu_0}(p))=\varnothing.$

In order to prove the second part, assume that $D_{\mu_0}(p)\neq \varnothing$. Then $\mu(p)\leq\mu_0 $ and so  $\mu(p)=\mu_0$.
Next, pick $x\in D_{\mu_0}(p)$. Since $\Gamma_{\mu_0}(p)$ is complete, then we have $x>_{\mu_0}^p y$ for all $y\in N\setminus\{x\},$ that is, $x$ is a maximum in $\Gamma_{\mu_0}(p)$. Then, by Lemma \ref{maximum}, $D_{\mu_0}(p)=\{x\}$.
\end{proof}

Let us denote by $\mathcal{C}(\Gamma_{\mu}(p))$ the set of the connected components of $\Gamma_{\mu}(p)$ and define $\mathcal{A}(\Gamma_{\mu}(p))=\{\Gamma\in\mathcal{C}(\Gamma_{\mu}(p)) : \Gamma \ \hbox{is acyclic} \}$. We are ready for a key proposition giving a lower bound for $|D_{\mu}(p)|$ and leading to some interesting consequences.

\begin{proposition}\label{one} Let $\mu\in \mathbb{N}\cap (h/2,h]$ and  $p\in \mathcal{P}$. Then
\[
D_{\mu}(p)
=\underset{\Gamma\in\mathcal{C}(\Gamma_{\mu}(p))}{\bigcup}\max(\Gamma)\supseteq\underset{\Gamma\in\mathcal{A}(\Gamma_{\mu}(p))}{\bigcup}\max(\Gamma) \supseteq\mathrm{I}(\Gamma_{\mu}(p)),
\]
 and
 \[
 |D_{\mu}(p)|
=\underset{\Gamma\in\mathcal{C}(\Gamma_{\mu}(p))}{\sum}|\max(\Gamma)|\geq |\mathcal{A}(\Gamma_{\mu}(p))|\geq |\mathrm{I}(\Gamma_{\mu}(p))|.
\]
\end{proposition}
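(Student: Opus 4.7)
The plan is to reduce the entire statement to a couple of elementary facts about connected components and finite acyclic graphs, after applying Lemma \ref{intersection2} to identify $D_\mu(p)$ with $\max(\Gamma_\mu(p))$.

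First I would unwind the first equality. By Lemma \ref{intersection2}, $D_\mu(p)=\max(\Gamma_\mu(p))$, so it suffices to show that $\max(\Gamma_\mu(p))=\bigcup_{\Gamma\in\mathcal{C}(\Gamma_\mu(p))}\max(\Gamma)$. This follows directly from the defining property of connected components recalled in Section \ref{graph-sec}: an arc of $\Gamma_\mu(p)$ has both endpoints in the same component and is itself an arc of that component. Consequently, a vertex $x$ lying in the component $\Gamma_i=(V_i,A_i)$ has no incoming arc in $\Gamma_\mu(p)$ if and only if it has no incoming arc in $\Gamma_i$. Since the vertex sets $V_i$ partition $N$, the union is actually disjoint, which gives at once the corresponding cardinality identity $|D_\mu(p)|=\sum_{\Gamma\in\mathcal{C}(\Gamma_\mu(p))}|\max(\Gamma)|$.

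Next I would tackle the three remaining comparisons. The inclusion $\bigcup_{\Gamma\in\mathcal{C}(\Gamma_\mu(p))}\max(\Gamma)\supseteq \bigcup_{\Gamma\in\mathcal{A}(\Gamma_\mu(p))}\max(\Gamma)$ is immediate from $\mathcal{A}(\Gamma_\mu(p))\subseteq\mathcal{C}(\Gamma_\mu(p))$. For the inclusion $\bigcup_{\Gamma\in\mathcal{A}(\Gamma_\mu(p))}\max(\Gamma)\supseteq \mathrm{I}(\Gamma_\mu(p))$, I would use the characterization of isolated vertices recalled at the end of Section \ref{graph-sec}: if $x\in\mathrm{I}(\Gamma_\mu(p))$, then the connected component of $\Gamma_\mu(p)$ containing $x$ is $(\{x\},\varnothing)$, which is trivially acyclic, and $x$ is its (unique) maximal vertex.

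For the two inequalities between cardinalities, the first, $\sum_{\Gamma\in\mathcal{C}(\Gamma_\mu(p))}|\max(\Gamma)|\ge |\mathcal{A}(\Gamma_\mu(p))|$, reduces to the claim that any nonempty finite acyclic graph $\Gamma=(V,A)$ has $\max(\Gamma)\ne\varnothing$. This is the one step needing a short argument: pick any $x_1\in V$; if $x_1\notin\max(\Gamma)$, choose $x_2$ with $(x_2,x_1)\in A$, and iterate. Because $\Gamma$ is acyclic, the produced sequence is made of distinct vertices, so by finiteness of $V$ the process must stop at a maximal vertex. Restricting to components in $\mathcal{A}(\Gamma_\mu(p))$ and summing yields the bound. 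Finally, the inequality $|\mathcal{A}(\Gamma_\mu(p))|\ge|\mathrm{I}(\Gamma_\mu(p))|$ follows from the injective correspondence sending each $x\in\mathrm{I}(\Gamma_\mu(p))$ to its connected component $(\{x\},\varnothing)\in\mathcal{A}(\Gamma_\mu(p))$. I do not foresee a real obstacle beyond the source-finding argument in acyclic graphs; everything else is bookkeeping via the partition property of connected components.
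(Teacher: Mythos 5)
Your proposal is correct and follows essentially the same route as the paper's proof: reduce to maximality within each connected component (the paper argues this directly from the definition of $D_\mu(p)$ rather than citing Lemma \ref{intersection2}, a cosmetic difference), handle isolated vertices via their singleton components, and establish $\max(\Gamma)\neq\varnothing$ for acyclic components by the same iterative source-finding argument. No gaps.
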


\begin{proof} Let $\Gamma=(V,A)\in \mathcal{C}(\Gamma_{\mu}(p)).$ Then $\Gamma\leq \Gamma_{\mu}(p)$, so that $V\subseteq N$ and $A\subseteq \Sigma_{\mu}(p)$. Since $\Gamma$ is a connected component of $\Gamma_{\mu}(p)$, we have that, for every $x\in V$ and $y\in N\setminus V$,
$y\not>^p_{\mu} x$.
This immediately gives that each $x\in \max(\Gamma)$ belongs to $D_{\mu}(p),$ so that $D_{\mu}(p)
\supseteq\underset{\Gamma\in\mathcal{C}(\Gamma_{\mu}(p))}{\bigcup}\max(\Gamma).$ The other inclusion is trivial and thus $D_{\mu}(p)
=\underset{\Gamma\in\mathcal{C}(\Gamma_{\mu}(p))}{\bigcup}\max(\Gamma).$ Since $\mathcal{A}(\Gamma_{\mu}(p))\subseteq \mathcal{C}(\Gamma_{\mu}(p))$ and, for every $x\in \mathrm{I}(\Gamma_\mu(p))$, $(\{x\},\varnothing)\in \mathcal{A}(\Gamma_\mu(p))$,
we also get
\[
\underset{\Gamma\in\mathcal{C}(\Gamma_{\mu}(p))}{\bigcup}\max(\Gamma)\supseteq\underset{\Gamma\in\mathcal{A}(\Gamma_{\mu}(p))}{\bigcup}\max(\Gamma)\supseteq\mathrm{I}(\Gamma_{\mu}(p)).
\]
In particular, since there is no overlap between vertices of different connected components, we  deduce $|D_{\mu}(p)|
=\underset{\Gamma\in\mathcal{C}(\Gamma_{\mu}(p))}{\sum}|\max(\Gamma)|$. We complete the proof showing that for every $\Gamma\in \mathcal{A}(\Gamma_{\mu}(p))$, we have $\max(\Gamma)\neq \varnothing.$
 Pick $x_1\in V$. If  $y\not >^p_{\mu} x_1$ for all $y\in V$,
then we have $x_1\in \max(\Gamma) $ and we have finished. Assume instead there exists $x_2\in V$  with $x_2>^p_{\mu} x_1.$ Obviously, we have $x_2\neq x_1.$
Then, repeat the argument for $x_2$. Since the set $N$ is finite and $\Gamma$ contains no cycle, in a finite number $k\leq n$ of steps, we obtain an element $x_k\in \max(\Gamma).$
\end{proof}

\begin{corollary}\label{muacyclic} Let $\mu \in \mathbb{N}\cap (h/2,h]$ and $p\in\mathcal{P}.$ If $\Gamma_{\mu}(p)$ admits at least an acyclic connected component, then $\mu(p)\leq\mu.$
\end{corollary}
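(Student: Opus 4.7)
The plan is to derive this essentially as an immediate consequence of Proposition \ref{one} combined with the definition of $\mu(p)$. The strategy is to show $D_\mu(p) \neq \varnothing$, since by definition $\mu(p) = \min\{m \in \mathbb{N}\cap(h/2,h] : D_m(p) \neq \varnothing\}$, so any $\mu$ in the admissible range for which $D_\mu(p)$ is nonempty automatically satisfies $\mu(p) \le \mu$.

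To show $D_\mu(p)$ is nonempty, I would invoke Proposition \ref{one}, which already gives the chain of inclusions
\[
D_\mu(p) \supseteq \bigcup_{\Gamma \in \mathcal{A}(\Gamma_\mu(p))} \max(\Gamma).
\]
By hypothesis there is at least one acyclic connected component $\Gamma$ of $\Gamma_\mu(p)$, so $\mathcal{A}(\Gamma_\mu(p)) \neq \varnothing$. The final part of the proof of Proposition \ref{one} established that $\max(\Gamma) \neq \varnothing$ for every acyclic connected component (via the argument that walking backwards along arcs in a finite acyclic graph must terminate in a maximal vertex). Hence the union above is nonempty, so $D_\mu(p) \neq \varnothing$, and the desired inequality $\mu(p) \le \mu$ follows.

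There is essentially no obstacle here: the corollary is a direct repackaging of Proposition \ref{one} with the definition of $\mu(p)$. The only thing to be careful about is making sure $\mu$ lies in the range over which the minimum defining $\mu(p)$ is taken, which is explicit in the hypothesis $\mu \in \mathbb{N}\cap(h/2,h]$.
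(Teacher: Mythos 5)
Your proof is correct and follows essentially the same route as the paper: the paper's own argument is simply that Proposition \ref{one} gives $|D_{\mu}(p)|\geq|\mathcal{A}(\Gamma_{\mu}(p))|\geq 1$, hence $D_{\mu}(p)\neq\varnothing$ and $\mu(p)\leq\mu$ by the definition of $\mu(p)$. You unpack the same inequality via the set inclusions and the nonemptiness of $\max(\Gamma)$ for acyclic components, which is just a more explicit rendering of the identical argument.
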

\begin{proof} By Proposition \ref{one}, we have $|D_{\mu}(p)|\geq 1,$ so that $D_{\mu}(p)\neq \varnothing$.
\end{proof}

\begin{corollary}\label{cor-connected}
 Let $\mu \in \mathbb{N}\cap (h/2,h]$ and $p\in\mathcal{P}.$ If $\Gamma_\mu(p)$ is acyclic and $D_\mu(p)$ is a singleton, then $\Gamma_\mu(p)$ is connected.
\end{corollary}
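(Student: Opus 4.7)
The plan is to deduce connectedness by counting: the singleton condition on $D_\mu(p)$ will force the number of connected components of $\Gamma_\mu(p)$ to be at most one, and since $N$ is nonempty there is at least one component.

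First I would invoke Proposition \ref{one}, which gives
\[
|D_{\mu}(p)|=\sum_{\Gamma\in\mathcal{C}(\Gamma_{\mu}(p))}|\max(\Gamma)|.
\]
Since by hypothesis $\Gamma_\mu(p)$ is acyclic, every connected component $\Gamma\in\mathcal{C}(\Gamma_{\mu}(p))$ is itself acyclic, so $\mathcal{C}(\Gamma_{\mu}(p))=\mathcal{A}(\Gamma_{\mu}(p))$. The last part of the proof of Proposition \ref{one} establishes that $\max(\Gamma)\neq\varnothing$ for every acyclic component $\Gamma$, hence $|\max(\Gamma)|\geq 1$ for each $\Gamma\in\mathcal{C}(\Gamma_{\mu}(p))$.

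Combining these observations yields the key inequality $|D_{\mu}(p)|\geq |\mathcal{C}(\Gamma_{\mu}(p))|$. The assumption that $D_\mu(p)$ is a singleton therefore forces $|\mathcal{C}(\Gamma_{\mu}(p))|\leq 1$. On the other hand, since the vertex set $N$ is nonempty, $\Gamma_\mu(p)$ has at least one connected component. Thus $|\mathcal{C}(\Gamma_{\mu}(p))|=1$, which by definition means $\Gamma_\mu(p)$ is connected.

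There is no real obstacle here; the statement is a direct corollary of Proposition \ref{one} once one notices that acyclicity is inherited by connected components and that each acyclic component contributes at least one maximal vertex to $D_\mu(p)$. The only subtlety to mention explicitly is this inheritance, to justify replacing $\mathcal{C}$ by $\mathcal{A}$ in the sum.
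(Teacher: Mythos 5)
Your proof is correct and follows essentially the same route as the paper: both arguments use Proposition \ref{one} to get $1=|D_\mu(p)|\ge|\mathcal{A}(\Gamma_\mu(p))|=|\mathcal{C}(\Gamma_\mu(p))|\ge 1$ (using that acyclicity passes to connected components) and conclude that there is exactly one component. The only difference is that you re-derive the inequality $|D_\mu(p)|\ge|\mathcal{A}(\Gamma_\mu(p))|$ from the proof of Proposition \ref{one}, whereas the paper simply cites it as part of that proposition's statement.
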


\begin{proof}
Since $\Gamma_\mu(p)$ is acyclic, we have that $\mathcal{C}(\Gamma_\mu(p))=\mathcal{A}(\Gamma_\mu(p))$. Then, using Proposition \ref{one}, we get $1=|D_\mu(p)|\ge |\mathcal{C}(\Gamma_\mu(p))|\ge 1$. That implies $|\mathcal{C}(\Gamma_\mu(p))|= 1$, that is, $\Gamma_\mu(p)$ is connected.
\end{proof}

\begin{lemma}\label{intersection3}
Let $p\in \mathcal{P}$ such that $\mu(p^{r})\leq \mu(p)$. Then:
\begin{itemize}\item[(i)]
 $ D_{\mu(p)}(p)\cap D_{\mu(p^{r})}(p^{r})\subseteq\mathrm{I}(\Gamma_{\mu(p)}(p)).$ In particular, if $\Gamma_{\mu(p)}(p)$ is connected, then
 $ D_{\mu(p)}(p)\cap D_{\mu(p^{r})}(p^{r})=\varnothing.$
 \item[(ii)] If $|D_{\mu(p)}(p)|=1$ and $\Gamma_{\mu(p)}(p)$ is acyclic, then $ D_{\mu(p)}(p)\cap D_{\mu(p^{r})}(p^{r})=\varnothing.$
 \end{itemize}
\end{lemma}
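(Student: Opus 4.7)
The plan is to exploit the monotonicity of $D_\mu$ in $\mu$ together with Lemma \ref{intersection2}, and then to use Corollary \ref{cor-connected} to reduce part (ii) to part (i).

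For part (i), I would start from the hypothesis $\mu(p^r)\le \mu(p)$: since $\mu\mapsto D_\mu(p^r)$ is monotone (if $\mu\le \mu'$, then $D_\mu(p^r)\subseteq D_{\mu'}(p^r)$, as recalled in the paper before \eqref{Greenberg}), we have $D_{\mu(p^r)}(p^r)\subseteq D_{\mu(p)}(p^r)$. Intersecting with $D_{\mu(p)}(p)$ gives
\[
D_{\mu(p)}(p)\cap D_{\mu(p^r)}(p^r)\subseteq D_{\mu(p)}(p)\cap D_{\mu(p)}(p^r)= \mathrm{I}(\Gamma_{\mu(p)}(p)),
\]
where the last equality is precisely Lemma \ref{intersection2}. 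For the ``in particular'' part, observe that $n\ge 2$ (since $n\in\mathbb{N}_\diamond$), so any connected graph on $N$ has no isolated vertex: indeed, for any $x\in N$, pick $y\in N\setminus\{x\}$; connectedness yields a sequence linking $x$ to $y$, whose first step is an arc incident to $x$, forbidding $x\in \mathrm{I}(\Gamma_{\mu(p)}(p))$. Hence $\mathrm{I}(\Gamma_{\mu(p)}(p))=\varnothing$ and the intersection is empty.

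For part (ii), I would simply invoke Corollary \ref{cor-connected}: since $|D_{\mu(p)}(p)|=1$ and $\Gamma_{\mu(p)}(p)$ is acyclic, $\Gamma_{\mu(p)}(p)$ is connected. Then the ``in particular'' of part (i) yields $D_{\mu(p)}(p)\cap D_{\mu(p^r)}(p^r)=\varnothing$.

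There is no genuine obstacle here: once the monotonicity in $\mu$ and Lemma \ref{intersection2} are available, (i) is a one-line chain of inclusions, the connectedness remark is elementary, and (ii) is a direct consequence of (i) via Corollary \ref{cor-connected}. The only subtlety worth stating explicitly is the use of $n\ge 2$ to conclude that connectedness forbids isolated vertices.
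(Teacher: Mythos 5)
Your proposal is correct and follows essentially the same route as the paper: monotonicity of $\mu\mapsto D_\mu(p^r)$ combined with Lemma \ref{intersection2} for part (i), and Corollary \ref{cor-connected} to reduce part (ii) to the connected case of part (i). The only difference is that you spell out why a connected graph on $n\ge 2$ vertices has no isolated vertex, which the paper leaves implicit.
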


\begin{proof}$(i)$ From $\mu(p^{r})\leq \mu(p)$  we get $D_{\mu(p^r)}(p^r)\subseteq D_{\mu(p)}(p^r)$ and thus, by Lemma \ref{intersection2}, we deduce $ D_{\mu(p)}(p)\cap D_{\mu(p^{r})}(p^{r})\subseteq D_{\mu(p)}(p)\cap D_{\mu(p)}(p^{r})=\mathrm{I}(\Gamma_{\mu(p)}(p)).$
If $\Gamma_{\mu(p)}(p)$ is connected, then $\mathrm{I}(\Gamma_{\mu(p)}(p))$ is empty and thus also $ D_{\mu(p)}(p)\cap D_{\mu(p^{r})}(p^{r})=\varnothing.$

$(ii)$ By Corollary \ref{cor-connected}, (i) applies giving $ D_{\mu(p)}(p)\cap D_{\mu(p^{r})}(p^{r})=\varnothing.$
\end{proof}

\begin{corollary}\label{muacyclicnew} Let $\mu \in \mathbb{N}\cap (h/2,h]$ and $p\in\mathcal{P}.$ If $\Gamma_{\mu}(p)$ is acyclic, then, for every  $x\in N$, we do not have
$D_{\mu}(p)=D_{\mu}(p^{r})=\{x\}.$

\end{corollary}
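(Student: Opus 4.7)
The plan is to argue by contradiction, assuming there exists $x\in N$ with $D_{\mu}(p)=D_{\mu}(p^{r})=\{x\}$, and derive a contradiction from the two structural constraints this puts on $\Gamma_{\mu}(p)$: the vertex $x$ must be isolated, and yet $\Gamma_{\mu}(p)$ must be connected.

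First I would invoke Lemma \ref{intersection2}, which directly identifies $D_{\mu}(p)\cap D_{\mu}(p^{r})$ with $\mathrm{I}(\Gamma_{\mu}(p))$. Under the standing assumption, this intersection equals $\{x\}$, so $x$ is an isolated vertex of $\Gamma_{\mu}(p)$. In particular, the connected component of $\Gamma_{\mu}(p)$ containing $x$ is the trivial graph $(\{x\},\varnothing)$.

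Next I would exploit the acyclicity of $\Gamma_{\mu}(p)$ together with the fact that $D_{\mu}(p)$ is a singleton. Corollary \ref{cor-connected} applies verbatim to this situation and yields that $\Gamma_{\mu}(p)$ is connected. Since $n\ge 2$, a connected graph on $N$ cannot have an isolated vertex, because the connected component of any isolated vertex is a singleton graph and would therefore have to coincide with the whole (unique) connected component, forcing $|N|=1$. This contradicts $x$ being isolated and closes the argument.

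No step looks genuinely hard: the main obstacle, if any, is just recognising that Lemma \ref{intersection2} and Corollary \ref{cor-connected} together pin down $\Gamma_{\mu}(p)$ in mutually incompatible ways. An alternative, slightly more hands-on route would bypass Corollary \ref{cor-connected} and apply Proposition \ref{one} directly: acyclicity gives $\mathcal{A}(\Gamma_{\mu}(p))=\mathcal{C}(\Gamma_{\mu}(p))$, hence $1=|D_{\mu}(p)|\ge|\mathcal{C}(\Gamma_{\mu}(p))|\ge 1$, so $\Gamma_{\mu}(p)$ has exactly one component, which cannot be $(\{x\},\varnothing)$ because $n\ge 2$.
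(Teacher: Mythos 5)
Your argument is correct and is essentially identical to the paper's own proof: both deduce from Lemma \ref{intersection2} that $x$ is isolated in $\Gamma_{\mu}(p)$, then invoke Corollary \ref{cor-connected} to get connectedness, and conclude via $n\ge 2$. The alternative route you sketch through Proposition \ref{one} is just an unfolding of Corollary \ref{cor-connected}, not a genuinely different argument.
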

\begin{proof}  Assume by contradiction that $D_{\mu_0}(p)=D_{\mu_0}(p^{r})=\{x\},$ for some $x\in N$. Then, by  Lemma \ref{intersection2}, we have that $x$ is isolated in $\Gamma_{\mu}(p)$. On the other hand, by Corollary \ref{cor-connected}, $\Gamma_{\mu}(p)$ is connected so that its only vertex is $x$, against $n\geq 2.$
\end{proof}

\begin{lemma}\label{four}
Let $p\in \mathcal{P}$ and assume that both $\Gamma_{\mu(p)}(p)$ and $\Gamma_{\mu(p^{r})}(p^{r})$ admit an acyclic connected component. Then:
\begin{itemize}
\item[(i)] $\mu(p)=\mu(p^r)$.
\item[(ii)] If $\Gamma_{\mu(p)}(p)$ is connected, then $ D_{\mu(p)}(p)\cap D_{\mu(p^{r})}(p^{r})=\varnothing.$
\item[(iii)] If $\Gamma_{\mu(p)}(p)$ is acyclic, then
there exists no $x\in N$ such that
$D_{\mu(p)}(p)=  D_{\mu(p^{r})}(p^{r})=\{x\}.$
\end{itemize}
\end{lemma}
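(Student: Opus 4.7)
The plan is to prove the three parts in sequence: (i) first on its own, (ii) as a direct application of Lemma \ref{intersection3}(i) using (i), and (iii) as a reduction to (ii) via Corollary \ref{cor-connected}.

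For (i), I will exploit the fact that passing from $p$ to $p^{r}$ reverses every arc of the $\mu$-majority graph: since $x>_\mu^p y$ is equivalent to $y>_\mu^{p^{r}}x$, the arc set of $\Gamma_{\mu}(p^{r})$ is obtained from that of $\Gamma_{\mu}(p)$ by swapping the endpoints of each arc. Consequently, $\Gamma_{\mu}(p)$ and $\Gamma_{\mu}(p^{r})$ share the same partition into connected components (the paper's notion of connectedness being insensitive to arc orientation), and an $l$-cycle $x_1\to\cdots\to x_l\to x_1$ in one gives rise to the reversed $l$-cycle $x_1\to x_l\to\cdots\to x_2\to x_1$ in the other. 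Hence $\Gamma_{\mu}(p)$ admits an acyclic connected component if and only if $\Gamma_{\mu}(p^{r})$ does. Applying this transfer principle at $\mu=\mu(p)$, the hypothesis on $p$ gives that $\Gamma_{\mu(p)}(p^{r})$ has an acyclic connected component, and Corollary \ref{muacyclic} then yields $\mu(p^{r})\le\mu(p)$. The symmetric argument applied at $\mu=\mu(p^{r})$ delivers the reverse inequality, proving $\mu(p)=\mu(p^{r})$.

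For (ii), part (i) secures the hypothesis $\mu(p^{r})\le\mu(p)$ of Lemma \ref{intersection3}(i), whose second clause directly gives $D_{\mu(p)}(p)\cap D_{\mu(p^{r})}(p^{r})=\varnothing$ once $\Gamma_{\mu(p)}(p)$ is connected. For (iii), I will argue by contradiction: assuming that some $x\in N$ satisfies $D_{\mu(p)}(p)=D_{\mu(p^{r})}(p^{r})=\{x\}$, the set $D_{\mu(p)}(p)$ is a singleton and $\Gamma_{\mu(p)}(p)$ is acyclic by hypothesis, so Corollary \ref{cor-connected} forces $\Gamma_{\mu(p)}(p)$ to be connected; part (ii) then yields an empty intersection, contradicting the membership of $x$ in both sets.

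The only substantive step is (i), which hinges on the transfer principle that acyclic connected components of $\mu$-majority graphs are preserved by the involution $p\mapsto p^{r}$; once this is isolated and combined with Corollary \ref{muacyclic}, parts (ii) and (iii) reduce to bookkeeping on the results already collected in Section \ref{majority graphs}.
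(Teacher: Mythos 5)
Your proposal is correct and follows essentially the same route as the paper: part (i) via the arc-reversal transfer of acyclic connected components combined with Corollary \ref{muacyclic}, part (ii) via Lemma \ref{intersection3}(i), and part (iii) by reducing to (ii) through Corollary \ref{cor-connected}. The only difference is that you spell out explicitly why acyclic connected components are preserved under $p\mapsto p^{r}$, a step the paper leaves implicit.
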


\begin{proof} $(i)$ The fact that $\Gamma_{\mu(p)}(p)$ admits an acyclic connected component implies that also $\Gamma_{\mu(p)}(p^r)$ admits an acyclic connected component and therefore Corollary \ref{muacyclic} gives $\mu(p^r)\leq \mu(p)$. The same argument applied to $\Gamma_{\mu(p^{r})}(p^{r})$ gives  $\mu(p)\leq \mu(p^r).$

$(ii)$ Assume that $\Gamma_{\mu(p)}(p)$ is connected. Since, by $(i)$,  we have that $\mu(p)=\mu(p^r)$, then Lemma \ref{intersection3} applies, giving $ D_{\mu(p)}(p)\cap D_{\mu(p^{r})}(p^{r})=\varnothing.$

$(iii)$ Let $\Gamma_{\mu(p)}(p)$  be acyclic and assume, by contradiction, that there exists $x\in N$ such that $D_{\mu(p)}(p)=  D_{\mu(p^{r})}(p^{r})=\{x\}$.
By Corollary \ref{cor-connected}, $\Gamma_{\mu(p)}(p)$ is connected  and thus, by $(ii)$, we have that $ D_{\mu(p)}(p)\cap D_{\mu(p^{r})}(p^{r})=\varnothing,$ a contradiction.
\end{proof}

\begin{lemma}\label{minimalmu-ii} If $h$ is odd, then, for every $p\in \mathcal{P}$, $\mu(p)=\mu_{0}$ and $D_{\mu(p)}(p)=D_{\mu(p^{r})}(p^{r})$ imply $\mu(p^{r})>\mu_0$.
\end{lemma}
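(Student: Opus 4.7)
The plan is to argue by contradiction, using Lemmas \ref{minimalmu-i} and \ref{intersection2} as the only non-trivial inputs. Assume $h$ is odd, $p\in\mathcal{P}$, $\mu(p)=\mu_0$ and $D_{\mu(p)}(p)=D_{\mu(p^{r})}(p^{r})$, and suppose by contradiction that $\mu(p^{r})\le \mu_0$. Since $\mu_0$ is the minimal majority threshold, we have $\mu(p^r)\ge \mu_0$ by definition, so necessarily $\mu(p^r)=\mu_0$.

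Now I invoke Lemma \ref{minimalmu-i} twice. Applied to $p$, since $\mu(p)=\mu_0$ means $D_{\mu_0}(p)\neq\varnothing$, it yields the existence of $x\in N$ with $D_{\mu_0}(p)=\{x\}$. Applied to $p^r$ (which has the same odd $h$), since $D_{\mu_0}(p^r)\neq\varnothing$, it yields some $y\in N$ with $D_{\mu_0}(p^r)=\{y\}$. The hypothesis $D_{\mu(p)}(p)=D_{\mu(p^{r})}(p^{r})$ then forces $\{x\}=\{y\}$.

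Finally I combine this with Lemma \ref{intersection2}, which gives $D_{\mu_0}(p)\cap D_{\mu_0}(p^{r})=\mathrm{I}(\Gamma_{\mu_0}(p))$. Under our assumption the left hand side equals $\{x\}$, so $x\in \mathrm{I}(\Gamma_{\mu_0}(p))$. But Lemma \ref{minimalmu-i} also asserts that, for $h$ odd, $\mathrm{I}(\Gamma_{\mu_0}(p))=\varnothing$, a contradiction. Hence $\mu(p^{r})>\mu_0$. The only subtlety is keeping track of which inputs of Lemma \ref{minimalmu-i} are being used on $p$ and which on $p^r$; there is no real obstacle here since the lemma's conclusions about completeness, the empty isolated set, and the singleton structure all apply symmetrically.
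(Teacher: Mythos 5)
Your proposal is correct and follows essentially the same route as the paper: both reduce to $\mu(p^r)=\mu_0$, use Lemma \ref{minimalmu-i} to identify $D_{\mu_0}(p)=D_{\mu_0}(p^r)=\{x\}$, and then use Lemma \ref{intersection2} to conclude that $x$ would have to be isolated in $\Gamma_{\mu_0}(p)$, which is impossible. The only cosmetic difference is that the paper contradicts the isolation of $x$ via its being the maximum of $\Gamma_{\mu_0}(p)$, while you contradict it via $\mathrm{I}(\Gamma_{\mu_0}(p))=\varnothing$; both are immediate from Lemma \ref{minimalmu-i}.
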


\begin{proof}Let $p\in\mathcal{P}$ and assume that $\mu(p)=\mu_0$ and  $D_{\mu(p)}(p)=D_{\mu(p^{r})}(p^{r})$. Then $D_{\mu_0}(p)\neq \varnothing$ and, using Lemma \ref{minimalmu-i},
$\Gamma_{\mu(p)}(p)$ has a maximum $x\in N$ and $ D_{\mu(p)}(p)=D_{\mu(p^{r})}(p^{r})=\{x\}$. Assume by contradiction that $\mu(p^{r})=\mu_0.$  By Lemma \ref{intersection2}, we get that $x$ is isolated in $\Gamma_{\mu(p)}(p)$, against the fact that $x$ is the maximum of $\Gamma_{\mu(p)}(p)$.
\end{proof}

\begin{corollary}\label{mupmumin} Let $p\in \mathcal{P}$ such that $\mu(p)=\mu_{0}$. If $\Gamma_{\mu(p)}(p)$ is acyclic, then there exists no $x\in N$ such that $D_{\mu(p)}(p)=  D_{\mu(p^{r})}(p^{r})=\{x\}$.
\end{corollary}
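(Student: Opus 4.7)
The plan is to argue by contradiction, reducing to Corollary \ref{muacyclicnew} by first verifying that the minimality conditions force $\mu(p^r)=\mu_0=\mu(p)$, so that the two sets $D_{\mu(p)}(p)$ and $D_{\mu(p^r)}(p^r)$ are in fact values of the same functional $D_{\mu_0}(\cdot)$ applied to $p$ and $p^r$.

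The first observation is essentially graph-theoretic and independent of the rest: by definition of $p^r$, an arc $(u,v)\in\Sigma_{\mu}(p^r)$ corresponds to $u>_\mu^{p^r}v$, which is equivalent to $v>_\mu^p u$, i.e.\ to $(v,u)\in\Sigma_\mu(p)$. Hence $\Gamma_\mu(p^r)$ is obtained from $\Gamma_\mu(p)$ by reversing every arc, and since reversing arcs preserves cycles (each $l$-cycle is turned into an $l$-cycle on the same vertex set), $\Gamma_\mu(p)$ acyclic implies $\Gamma_\mu(p^r)$ acyclic. Applying this to $\mu=\mu_0=\mu(p)$, the assumption that $\Gamma_{\mu(p)}(p)$ is acyclic yields that $\Gamma_{\mu_0}(p^r)$ is acyclic too. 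In particular, every connected component of $\Gamma_{\mu_0}(p^r)$ is acyclic, so Corollary \ref{muacyclic} gives $\mu(p^r)\le\mu_0$; combined with the lower bound $\mu(p^r)\ge\mu_0$ that holds for every preference profile, we conclude $\mu(p^r)=\mu_0=\mu(p)$.

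Now suppose, for contradiction, that there exists $x\in N$ with $D_{\mu(p)}(p)=D_{\mu(p^r)}(p^r)=\{x\}$. By the equality $\mu(p^r)=\mu(p)=\mu_0$ just established, this rewrites as $D_{\mu_0}(p)=D_{\mu_0}(p^r)=\{x\}$. But $\Gamma_{\mu_0}(p)$ is acyclic by hypothesis, and Corollary \ref{muacyclicnew} (applied with $\mu=\mu_0$) forbids precisely this coincidence. Contradiction, which proves the claim.

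The only mildly delicate point is the acyclicity-under-reversal step, which is why I isolated it first; everything else is a straightforward invocation of previously established lemmas. No long case analysis or new graph-theoretic construction is required here, and the corollary is really a packaging of Corollary \ref{muacyclicnew} with the observation that the minimal majority threshold $\mu_0$ behaves symmetrically under reversal as soon as $\Gamma_{\mu_0}(p)$ is acyclic.
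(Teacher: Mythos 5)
Your proof is correct and follows essentially the same route as the paper's: deduce acyclicity of $\Gamma_{\mu_0}(p^r)$ from that of $\Gamma_{\mu_0}(p)$, use Corollary \ref{muacyclic} together with the bound $\mu(p^r)\ge\mu_0$ to get $\mu(p^r)=\mu(p)=\mu_0$, and then invoke Corollary \ref{muacyclicnew}. You merely spell out the arc-reversal step that the paper leaves implicit.
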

\begin{proof} The acyclicity of $\Gamma_{\mu(p)}(p)$  implies that of $\Gamma_{\mu(p)}(p^{r})$, so that, by Corollary \ref{muacyclic}, we have $\mu_0\leq\mu(p^{r})\leq \mu(p)=\mu_0$. It follows that $\mu(p^{r})=\mu(p)=\mu_0$ and Corollary \ref{muacyclicnew} applies.
\end{proof}

Due to the previous results, it is important to understand which conditions guarantee the acyclicity of $\Gamma_{\mu(p)}(p)$.
By Lemma \ref{maximum}, we know that, for every $\mu \in \mathbb{N}\cap (h/2,h]$,  $\Gamma_{\mu}(p)$ is $2$-acyclic. Anyway, it can admit $l$-cycles for some $l\ge 3$. We explore this possibility through Propositions 6 and 7 in Bubboloni and Gori (2014).

\begin{proposition}\label{green} Let  $\mu \in \mathbb{N}\cap (h/2,h]$ and $l\in\mathbb{N}\cap [2,n]$. Then there exists $p\in \mathcal{P}$ such that $\Gamma_{\mu}(p)$ is $l$-cyclic if and only if $\mu\leq \frac{l-1}{l}h.$
\end{proposition}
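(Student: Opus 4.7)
The plan is to prove both directions by elementary arguments: the necessity of $\mu\le\frac{l-1}{l}h$ by a double-counting estimate on the arcs of an $l$-cycle, and the sufficiency by an explicit cyclic profile generalising the Condorcet paradox.

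For the "only if" direction, I would suppose that $\Gamma_\mu(p)$ contains an $l$-cycle with vertices $x_1,\ldots,x_l$, with indices taken modulo $l$, so that $x_j>_\mu^p x_{j+1}$ for every $j$. Setting $A_j=\{i\in H:x_j>_{p_i}x_{j+1}\}$ gives $|A_j|\ge\mu$, hence $\sum_{j=1}^l|A_j|\ge l\mu$. Interchanging the order of summation, for each voter $i\in H$ the number of indices $j$ with $x_j>_{p_i}x_{j+1}$ cannot equal $l$, since the cyclic sequence $x_1,\ldots,x_l,x_1$ must contain at least one descent under any linear order on $\{x_1,\ldots,x_l\}$; so that count is at most $l-1$. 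Summing over voters yields $\sum_{j=1}^l|A_j|\le h(l-1)$, and combining the two estimates gives $l\mu\le h(l-1)$, that is, $\mu\le\frac{l-1}{l}h$.

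For the "if" direction, assume $\mu\le\frac{l-1}{l}h$. Since $\mu$ is an integer, this is equivalent to $\mu\le\lfloor\frac{l-1}{l}h\rfloor=h-\lceil h/l\rceil$, so $\lceil h/l\rceil\le h-\mu$. I partition $H$ into groups $G_1,\ldots,G_l$ with each $|G_j|\in\{\lfloor h/l\rfloor,\lceil h/l\rceil\}$, so $|G_j|\le h-\mu$ for every $j$. Fix any $l$ distinct alternatives $x_1,\ldots,x_l\in N$ (possible since $l\le n$) and any fixed enumeration $x_{l+1},\ldots,x_n$ of the remaining alternatives. To each voter in $G_j$ I assign the preference $x_j>x_{j+1}>\cdots>x_l>x_1>\cdots>x_{j-1}>x_{l+1}>\cdots>x_n$. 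Then for every $j$, the voters failing to prefer $x_j$ to $x_{j+1}$ are precisely those in $G_{j+1}$, so $|A_j|=h-|G_{j+1}|\ge\mu$, and $\{x_1,\ldots,x_l\}$ with the arcs $(x_j,x_{j+1})$ is an $l$-cycle in $\Gamma_\mu(p)$.

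The main obstacle I expect is the arithmetic in the "if" direction, namely verifying the identity $\lfloor\frac{l-1}{l}h\rfloor=h-\lceil h/l\rceil$ (which one checks by writing $h=lq+r$ with $0\le r<l$ and considering the two cases $r=0$ and $r>0$) so that the near-equal partition of $h$ voters into $l$ groups automatically has maximum size at most $h-\mu$. Once this integrality point is nailed down, both directions are short; the "only if" direction hinges only on the combinatorial observation that no linear order can align with a cyclic sequence in every adjacent pair.
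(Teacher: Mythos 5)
Your proof is correct, but it takes a genuinely different route from the paper. The paper's proof of Proposition \ref{green} is essentially a reduction: it quotes Propositions 6 and 7 of Bubboloni and Gori (2014) (acyclicity of the full majority graph on $l$ alternatives when $\mu>\frac{l-1}{l}h$, and existence of a full $l$-cycle when $\mu\le\frac{l-1}{l}h$) and transfers these to the $n$-alternative setting by restricting the profile to the $l$ vertices of a putative cycle in one direction, and by extending an $l$-alternative profile with the remaining alternatives appended at the bottom in the other. You instead prove both halves from scratch: the necessity via the double count $l\mu\le\sum_j|A_j|\le h(l-1)$, using that a linear order cannot realise all $l$ adjacent comparisons of a cyclic sequence, and the sufficiency via the explicit rotated-Condorcet profile on a near-equal partition of the voters, with the integrality step $\lfloor\frac{l-1}{l}h\rfloor=h-\lceil h/l\rceil$ guaranteeing $|G_{j+1}|\le h-\mu$ and hence $|A_j|=h-|G_{j+1}|\ge\mu$. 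Both of your computations check out (including the boundary case $l=2$, where the hypothesis of the ``if'' direction is vacuous since $\mu>h/2$, and the case $h<l$, where some groups $G_j$ are empty but the bound $|A_j|\ge\mu$ still holds). What your approach buys is self-containedness: the reader need not consult the 2014 paper, and the two inequalities make transparent exactly where the threshold $\frac{l-1}{l}h$ comes from. What the paper's approach buys is brevity and consistency with Lemma \ref{ac}, which is already imported from the same external source.
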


\begin{proof}  Consider $\mu>\frac{l-1}{l}h$ and assume by contradiction that there exists $p\in \mathcal{P}$ and an $l$-cycle $\Gamma\le \Gamma_{\mu}(p)$ with vertex set $V$. Then $V\subseteq N$ and $|V|=l\leq n$.
Consider the preference profile $p'$ on the set of $l$ alternatives $V$ obtained from $p$ eliminating (if any) those entries in $N\setminus V.$
By Proposition 6 in Bubboloni and Gori (2014), we have that
$\Gamma_{\mu}(p')$ is acyclic, against the fact that $\Gamma\le \Gamma_{\mu}(p').$

Let now $\mu\leq \frac{l-1}{l}h$ and let $V\subseteq N$ with $|V|=l$. By Proposition 7 in Bubboloni and Gori (2014), there exists a preference profile $p'$ on the set of alternatives $V$ such that $\Gamma_{\mu}(p')$ contains an $l$-cycle $\Gamma$ whose set of vertices is $V$. Consider a preference profile $p$ on the set of alternatives $N,$ in which every individual $i\in H$ ranks in the first $l$ positions the alternatives in $V$ as $p_i'$ and those in $N\setminus V$ as she likes. Then $\Gamma\le \Gamma_{\mu}(p)$.
\end{proof}

Let us consider now
\[
\mu_{a}=\min\left\{m\in \mathbb{N}\cap (h/2,h]: m>\frac{n-2}{n-1}h\right\},
\]
and note that $\mu_a$ is well defined because $h\in \mathbb{N}\cap (h/2,h]$ and $h>\frac{n-2}{n-1}h$. Moreover, we have that $\mu_0\leq \mu_{a}\leq \mu_G$ and, when $n\in \{2,3\}$, $\mu_a=\mu_0$.

\begin{corollary}\label{soglia-green} Let $p\in\mathcal{P}$. If $\mu(p)\ge \mu_a,$ then $\Gamma_{\mu(p)}(p)$ is acyclic. In particular, for every  $n\in\{2,3\}$, $\Gamma_{\mu(p)}(p)$ is acyclic.
\end{corollary}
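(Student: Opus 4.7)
The plan is to show that $\Gamma_{\mu(p)}(p)$ is $l$-acyclic for every $l\in\mathbb{N}\cap[2,n]$, splitting the argument into two ranges of $l$.

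First, for $l\in\mathbb{N}\cap[2,n-1]$, I would use Proposition \ref{green} in its contrapositive form. Since the function $l\mapsto\frac{l-1}{l}$ is increasing on positive integers, we have $\frac{l-1}{l}h\leq\frac{n-2}{n-1}h$. The defining inequality $\mu_a>\frac{n-2}{n-1}h$ combined with the hypothesis $\mu(p)\geq\mu_a$ then yields $\mu(p)>\frac{l-1}{l}h$. By Proposition \ref{green}, no preference profile produces an $l$-cycle at majority threshold $\mu(p)$, so in particular $\Gamma_{\mu(p)}(p)$ is $l$-acyclic.

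The remaining value $l=n$ cannot in general be reached by Proposition \ref{green} (since $\mu_a$ need not exceed $\frac{n-1}{n}h$), and this is where I expect the only mild subtlety. Here I would argue directly from the definition of $\mu(p)$: suppose by contradiction that $\Gamma_{\mu(p)}(p)$ contains an $n$-cycle $\Gamma$. Because $|N|=n$, the vertex set of $\Gamma$ must coincide with $N$, so every element of $N$ has an incoming arc in $\Gamma$, hence also in $\Gamma_{\mu(p)}(p)$. Therefore $\max(\Gamma_{\mu(p)}(p))=\varnothing$, and Lemma \ref{intersection2} gives $D_{\mu(p)}(p)=\varnothing$, contradicting the very definition of $\mu(p)$.

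Finally, for the ``in particular'' clause, I would simply invoke the observation recorded just before the statement: when $n\in\{2,3\}$ one has $\mu_a=\mu_0$. Since $\mu(p)\geq\mu_0=\mu_a$ holds for every $p\in\mathcal{P}$, the first part of the corollary applies and $\Gamma_{\mu(p)}(p)$ is acyclic.
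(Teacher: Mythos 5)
Your proposal is correct and follows essentially the same route as the paper: the cases $l\in\{2,\ldots,n-1\}$ are handled via Proposition \ref{green} using $\mu(p)\ge\mu_a>\frac{n-2}{n-1}h\ge\frac{l-1}{l}h$, the $n$-cycle case is excluded because it would force $D_{\mu(p)}(p)=\varnothing$ against the definition of $\mu(p)$, and the ``in particular'' clause follows from $\mu_a=\mu_0\le\mu(p)$ when $n\in\{2,3\}$. You merely spell out in more detail what the paper compresses into ``obviously implies the contradiction.''
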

\begin{proof}
Consider $\Gamma_{\mu(p)}(p)$. It admits no $n$-cycle, because having such a cycle obviously implies the contradiction $D_{\mu(p)}(p)=\varnothing$. On the other hand, by Proposition \ref{green}, it does not have $l$-cycles for all $l\in\{2,\ldots, n-1\}$ because $\mu(p)> \frac{n-2}{n-1}h\geq \frac{l-1}{l}h.$ Finally note that, if $n\in\{2,3\}$, then $\mu(p)\ge \mu_0=\mu_a$.
\end{proof}
Due to the previous result, we call
$\mu_a$ the {\it acyclicity threshold}.

\begin{proposition}\label{blacktable-odd} If $n\geq 4$ and $h$ is odd and such that $h\geq \frac{3(n-1)}{n-3}$, then there exist $p\in\mathcal{P}$  such that $D_{\mu(p)}(p)=D_{\mu(p^{r})}(p^{r})=\{n\}.$
\end{proposition}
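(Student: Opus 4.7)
The plan is to construct explicitly a profile $p\in\mathcal{P}$ with $D_{\mu(p)}(p)=D_{\mu(p^r)}(p^r)=\{n\}$ by combining an $(n-1)$-cycle on the smaller alternative set $\{1,\ldots,n-1\}$ with a ``median'' insertion of alternative $n$.

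First, I observe that the hypothesis $h\ge 3(n-1)/(n-3)$ rearranges to $(\mu_0+1)(n-1)\le (n-2)h$, where $\mu_0=(h+1)/2$ is the minimal majority threshold for the odd integer $h$. This is exactly the condition of Proposition~\ref{green} with $l=n-1$ and $\mu=\mu_0+1$, which therefore yields a profile $p'$ on $\{1,\ldots,n-1\}$ with $h$ voters such that $\Gamma_{\mu_0+1}(p')$ contains an $(n-1)$-cycle. After relabeling the alternatives, I may assume this cycle is $1\to 2\to\cdots\to(n-1)\to 1$, so that $(j-1)>^{p'}_{\mu_0+1}j$ for every $j\in\{1,\ldots,n-1\}$, with indices taken modulo $n-1$ (so $0$ is identified with $n-1$).

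Second, I extend $p'$ to a profile $p\in\mathcal{P}$ on $N$ by inserting $n$ into each voter's ranking at a chosen position, so that for every $j\in\{1,\ldots,n-1\}$ one has $|\{i\in H:n>_{p_i}j\}|=(h-1)/2$, and hence $|\{i\in H:j>_{p_i}n\}|=\mu_0$. Since the insertion leaves pairwise margins within $\{1,\ldots,n-1\}$ intact, the cycle persists in $\Gamma_{\mu_0+1}(p)$. It follows that every $j\ne n$ satisfies $\max_{y\in N\setminus\{j\}}|\{i\in H:y>_{p_i}j\}|\ge\mu_0+1$, while the same maximum for $n$ equals exactly $\mu_0$; thus $D_{\mu_0}(p)=\varnothing$ and $D_{\mu_0+1}(p)=\{n\}$, so $\mu(p)=\mu_0+1$ and $D_{\mu(p)}(p)=\{n\}$. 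Symmetrically, every $j\ne n$ satisfies $\max_{y\in N\setminus\{j\}}|\{i\in H:j>_{p_i}y\}|\ge\mu_0+1$ (via its cycle-successor), while this maximum for $n$ equals $(h-1)/2<\mu_0$; hence $D_{\mu_0}(p^r)=\{n\}$, $\mu(p^r)=\mu_0$, and $D_{\mu(p^r)}(p^r)=\{n\}$, as required.

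The main obstacle is the feasibility of the insertion step: given an arbitrary cycle-bearing profile $p'$, placing $n$ in every voter so that $|\{i:n>_{p_i}j\}|=(h-1)/2$ holds simultaneously for all $j$ is not automatic, since the per-voter insertion positions are coupled across the alternatives. I would handle this by selecting $p'$ with enough cyclic symmetry---for instance, realizing $p'$ as a superposition of cyclic-shift blocks on $\{1,\ldots,n-1\}$, which is the construction underlying Proposition~\ref{green}---so that inserting $n$ at a fixed median position of every shifted voter yields the required uniform count by a brief orbit-counting argument. Should the parity of $h$ or its precise value force deviations from this symmetric pattern, I would correct them by adjoining auxiliary reversal pairs $(q,q^r)$, which leave every pairwise margin invariant and merely shift the relevant thresholds by a constant, thereby preserving both equalities $D_{\mu(p)}(p)=D_{\mu(p^r)}(p^r)=\{n\}$.
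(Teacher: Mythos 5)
Your overall strategy is the paper's: apply Proposition~\ref{green} with $l=n-1$ and $\mu=\mu_0+1$ (the arithmetic equivalence $h\ge\frac{3(n-1)}{n-3}\iff \mu_0+1\le\frac{n-2}{n-1}h$ is correct) to get an $(n-1)$-cycle on $N\setminus\{n\}$, then insert $n$ so that it is the unique element of $D_{\mu(p)}(p)$ and of $D_{\mu(p^r)}(p^r)$. Your threshold bookkeeping is also sound: if $n$ loses to every $j$ exactly $\mu_0$ times in $p$, then indeed $\mu(p)=\mu_0+1$, $D_{\mu(p)}(p)=\{n\}$, $\mu(p^r)=\mu_0$, $D_{\mu(p^r)}(p^r)=\{n\}$ (your construction is, in effect, the paper's with the roles of $p$ and $p^r$ swapped: the paper arranges $\mu(p)=\mu_0$ and $\mu(p^r)=\mu_0+1$).

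The genuine gap is the insertion step, which you correctly identify as the crux but do not actually carry out. Neither of your two proposed fixes works as stated: the ``median insertion with cyclic symmetry'' requires each alternative of $V$ to lie below the median position in exactly $(h-1)/2$ of the $h$ voters, which an orbit-counting argument can only deliver when $h$ interacts suitably with $n-1$ (and the cyclic-block structure of the profile from Proposition~\ref{green} is not available to you from the statement alone); and adjoining auxiliary reversal pairs $(q,q^r)$ changes the number of voters, which is fixed at $h$, so it cannot be used to repair parity. The irony is that the obstacle you are circumventing does not exist: insert $n$ at the \emph{top} of the ranking for exactly $(h-1)/2$ voters and at the \emph{bottom} for the remaining $(h+1)/2$ voters. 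Then for \emph{every} $j\in V$ the set $\{i\in H: n>_{p_i}j\}$ is literally the same set of $(h-1)/2$ voters, so the uniform count you need holds trivially, and the pairwise margins within $V$ (hence the cycle) are untouched. This extreme-position insertion is exactly the paper's device (there with $\mu_0$ voters placing $n$ on top); with it, your argument closes. As written, however, the proof is incomplete at its decisive step.
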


\begin{proof} First of all, note that $\mu_{0}=\frac{h+1}{2}$. Define then $\mu=\frac{h+3}{2}=\mu_0+1$ and $V=N\setminus\{n\}$. The assumption $h\geq \frac{3(n-1)}{n-3}$ is equivalent to $\mu\leq \frac{(n-1)-1}{n-1}h$ and thus,  by Proposition \ref{green}, there exists $p',$ a preference profile on the set of alternatives $V,$ such that $\Gamma_{\mu}(p')$ has an $(n-1)$-cycle $\Gamma$. We define now the preference profile $p\in\mathcal{P}$ defining, for every $i\in H$, the preference $p_i$ as follows.
If $i\leq \mu_0$, then let $p_i(1)=n$ and $p_i(j)=p'_i(j-1)$ for all $j\in\{2,\dots,n\}$; if $ \mu_0<i\leq h,$ then let $p_i(j)=p'_i(j)$ for all $j\in\{1,\dots,n-1\}$ and $p_i(n)=n.$ Note that in $p$, the alternative $n$ is ranked first $\mu_0$ times and last $h-\mu_0$ times. Thus, $n$ is a maximum in $\Gamma_{\mu_0}(p).$ By Lemma \ref{minimalmu-i}, we then get $\mu(p)=\mu_0$ and $D_{\mu(p)}(p)=\{n\}.$
 Moreover $\Gamma\leq \Gamma_{\mu}(p)$ so that also $\Gamma_{\mu}(p^{r})$ contains an $(n-1)$-cycle $\Gamma^r$, with inverted orientation,  whose vertex set is $V$.
That implies that $D_{\mu_0}(p^{r})=\varnothing$. Indeed, $n$ is not maximal in $\Gamma_ {\mu_0}(p^{r})$, being beaten $\mu_0$ times by any other alternative, and each alternative in $V$ is not maximal in  $\Gamma_ {\mu_0}(p^{r})$ because, due to the presence of the cycle $\Gamma^r,$ it is beaten $\mu>\mu_0$ times by a suitable alternative in $V$. Anyway $D_{\mu}(p^{r})=\{n\}$, because $n$ is isolated and thus maximal in $\Gamma_ {\mu}(p^{r}),$ by \eqref{intersection}; no other alternative is maximal because involved in $\Gamma^r$.
It follows that $\mu(p^{r})=\mu$ and $D_{\mu(p)}(p)=D_{\mu(p^{r})}(p^{r})=\{n\}.$
\end{proof}

\begin{proposition}\label{blacktable-even} If $n\geq 4$ and $h$ is even and such that $h\geq \frac{2(n-1)}{n-3}$, then there exist $p\in\mathcal{P}$  such that $D_{\mu(p)}(p)=D_{\mu(p^{r})}(p^{r})=\{n\}.$
\end{proposition}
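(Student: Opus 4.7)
The plan is to mimic the construction used in the proof of Proposition \ref{blacktable-odd}, but to exploit the parity of $h$ so that the distinguished alternative $n$ becomes \emph{isolated} (rather than merely a maximum) at the common threshold $\mu(p)=\mu(p^{r})=\mu_{0}$. This is exactly the behaviour illustrated by the six-voter, four-alternative example in the Introduction, where alternative $1$ ties pairwise with every other alternative.

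First I would set $\mu=\mu_{0}=(h+2)/2$ and observe that the hypothesis $h\geq 2(n-1)/(n-3)$ rearranges cleanly to $\mu\leq (n-2)h/(n-1)$, so Proposition \ref{green} applied with $l=n-1$ and vertex set $V=N\setminus\{n\}$ produces a preference profile $p'$ on $V$ whose $\mu_{0}$-majority graph contains an $(n-1)$-cycle $\Gamma$. Next I would define $p\in\mathcal{P}$ by the \emph{symmetric} extension of $p'$: for $i\leq h/2$ place $n$ on top of $p_{i}$ and use $p'_{i}$ below, while for $h/2<i\leq h$ use $p'_{i}$ at the top and place $n$ at the bottom. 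Since the relative order of any two elements of $V$ in $p_{i}$ coincides with that in $p'_{i}$, the cycle $\Gamma$ still sits inside $\Gamma_{\mu_{0}}(p)$, so no $y\in V$ is maximal in $\Gamma_{\mu_{0}}(p)$; on the other hand $n$ lies above (respectively below) each $y\in V$ in exactly $h/2<\mu_{0}$ preferences, so $n$ is isolated in $\Gamma_{\mu_{0}}(p)$. By Lemma \ref{intersection2} these two observations together yield $D_{\mu_{0}}(p)=\{n\}$, hence $\mu(p)=\mu_{0}$ and $D_{\mu(p)}(p)=\{n\}$.

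The argument for $p^{r}$ would then be purely formal. Reversing preferences only replaces $\Gamma$ by its reversed cycle $\Gamma^{r}$ inside $V$, which still lives in $\Gamma_{\mu_{0}}(p^{r})$ and still destroys the maximality of every $y\in V$; and the $h/2$-$h/2$ split leaves $n$ isolated in $\Gamma_{\mu_{0}}(p^{r})$ by exactly the same count. Hence $\mu(p^{r})=\mu_{0}$ and $D_{\mu(p^{r})}(p^{r})=\{n\}$, which completes the proof.

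The main subtlety, and the reason a literal port of the odd-case argument would not work, is that using $\mu=\mu_{0}+1$ for the cycle would inflate the required lower bound on $h$ to $4(n-1)/(n-3)$. The right idea is to keep $\mu$ at $\mu_{0}$ and let the even parity of $h$ convert what was a pure maximum in the odd case into the stronger property of being isolated; this is exactly what equalises $\mu(p)$ and $\mu(p^{r})$ at $\mu_{0}$ and keeps $\{n\}$ as the Minimax outcome on both sides.
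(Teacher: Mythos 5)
Your proposal is correct and follows essentially the same route as the paper's proof: the same rearrangement of the hypothesis to $\mu_{0}\leq\frac{n-2}{n-1}h$, the same invocation of Proposition \ref{green} with $l=n-1$ on $V=N\setminus\{n\}$, the same half-top/half-bottom placement of $n$, and the same conclusion that $n$ is isolated while the $(n-1)$-cycle kills the maximality of every other vertex in both $\Gamma_{\mu_0}(p)$ and $\Gamma_{\mu_0}(p^{r})$. No gaps.
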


\begin{proof} First of all, note that $\mu_{0}=\frac{h+2}{2}$ and define $V=N\setminus\{n\}$. The assumption $h\geq \frac{2(n-1)}{n-3}$ is equivalent to $\mu_0\leq \frac{(n-1)-1}{n-1}h$ and thus,  by Proposition \ref{green}, there exist a preference profile $p'$ on the set of alternatives $V$ such that $\Gamma_{\mu_0}(p')$ has an $(n-1)$-cycle $\Gamma$. We define now the preference profile $p\in\mathcal{P},$ defining, for every $i\in H$, the preference $p_i$ as follows.
If $i\leq \frac{h}{2}$, then  let $p_i(1)=n$ and $p_i(j)=p'_i(j-1)$ for all $j\in\{2,\dots,n\}$; if $ \frac{h}{2}<i\leq h,$ then let $p_i(j)=p'_i(j)$ for all $j\in\{1,\dots,n-1\}$ and $p_i(n)=n.$ Note that in $p$, the alternative $n$ is ranked first $\frac{h}{2}$ times  and last $\frac{h}{2}$ times. Thus,  by \eqref{intersection}, $n$ is isolated and maximal both in $\Gamma_{\mu_0}(p)$ and in
 $\Gamma_{\mu_0}(p^{r})$. Moreover, no further alternative is maximal in $\Gamma_{\mu_0}(p)$ because each element in $V$ is involved in the cycle $\Gamma\leq \Gamma_{\mu_0}(p)$. Since each cycle in $\Gamma_{\mu_0}(p)$ determines a cycle with inverted orientation in $\Gamma_{\mu_0}(p^{r})$, the same consideration holds for $\Gamma_{\mu_0}(p^{r})$, as well. Then, we  conclude that $\mu(p)=\mu(p^{r})=\mu_0$ and $D_{\mu(p)}(p)=D_{\mu(p^{r})}(p^{r})=\{n\}.$
\end{proof}

We conclude the section with a further lemma which is useful to manage the case with  three individuals and three alternatives.

\begin{lemma}\label{lemma33}
Let $(h,n)=(3,3)$ and $p\in\mathcal{P}$. Then:
\begin{itemize}
\item[(i)] the two following conditions are equivalent:
\begin{itemize}
	\item[(a)]the alternatives ranked first as well as those ranked third in $p$ are distinct;
	\item[(b)]$\Gamma_2(p)$ is a $3$-cycle.
\end{itemize}
\noindent Moreover, if one of the above conditions holds true, then the arc set of $\Gamma_3(p)$ is empty.
\item[(ii)]$\mu(p)=\mu(p^r).$
\end{itemize}

\end{lemma}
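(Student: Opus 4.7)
The plan is to exploit that, since $h=3$ is odd and $\mu_0=2$, Lemma \ref{minimalmu-i} combined with the $2$-acyclicity from Lemma \ref{maximum} forces $\Gamma_2(p)$ to be a tournament on the three vertices $\{1,2,3\}$; any such tournament is either transitive (has a maximum, which is the Condorcet winner) or is a $3$-cycle. This dichotomy will be the engine of the whole proof. Note also that, since $n=3$, $\mu_a=\mu_0=2$, so by Corollary \ref{soglia-green} both $\Gamma_{\mu(p)}(p)$ and $\Gamma_{\mu(p^r)}(p^r)$ are acyclic, and $\mu(p),\mu(p^r)\in\{2,3\}$.

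For $(i)$, I would first prove $(b)\Rightarrow(a)$ by contrapositive: if two voters share a top alternative $x$, then $x$ beats each other alternative at least $2$-$1$, so $x$ is a maximum of $\Gamma_2(p)$ and the tournament is transitive, not a $3$-cycle; symmetrically, a shared bottom yields a Condorcet loser, hence a minimum, again excluding the $3$-cycle. For $(a)\Rightarrow(b)$, after relabelling alternatives I would reduce to the case where the tops of voters $1,2,3$ are $1,2,3$ respectively; the distinctness of bottoms then forces the bottoms to be a derangement of $(1,2,3)$, leaving exactly two possibilities for $p$, which determine the middles uniquely. A direct pairwise majority count on each of these two profiles yields a $3$-cycle in $\Gamma_2(p)$. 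Finally, the claim about $\Gamma_3(p)$ being empty is immediate once $(a)$ is in force: any arc $(x,y)\in\Sigma_3(p)$ would require all three voters to rank $x$ above $y$, but the voter whose top is $y$ ranks $y$ above $x$.

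For $(ii)$, the key observation is that condition $(a)$ is self-dual under the reversal $p\leftrightarrow p^r$, since the set of first-ranked alternatives in $p^r$ equals the set of last-ranked alternatives in $p$, and vice versa. Combining this symmetry with the equivalence $(a)\iff(b)$ already proved in $(i)$ (applied once to $p$ and once to $p^r$) gives that $\Gamma_2(p)$ is a $3$-cycle if and only if $\Gamma_2(p^r)$ is a $3$-cycle. By the tournament dichotomy recalled above, $\Gamma_2(p)$ admits a maximum iff it is not a $3$-cycle, and by Lemma \ref{minimalmu-i} this is exactly when $D_{\mu_0}(p)\neq\varnothing$, that is $\mu(p)=\mu_0=2$; the analogous statement holds for $p^r$. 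Therefore $\mu(p)=2\iff\mu(p^r)=2$ and consequently $\mu(p)=\mu(p^r)$.

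I do not expect any serious obstacle: the only slightly delicate step is the direction $(a)\Rightarrow(b)$, where one must verify that the two derangement-type profiles both yield a $3$-cycle (and in particular that the orientation of the cycle is consistent), but this is a short case check on two $3\times 3$ matrices. Everything else follows from the tournament dichotomy and the symmetry of condition $(a)$ under reversal.
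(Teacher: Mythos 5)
Your proof is correct and follows essentially the same route as the paper: the reduction of $(a)\Rightarrow(b)$ to the two derangement profiles, the contrapositive for $(b)\Rightarrow(a)$ via a shared top/bottom producing a maximum/minimum, and the use of part (i) together with the self-duality of condition $(a)$ under reversal to get $\mu(p)=\mu(p^r)$ all mirror the paper's argument (which phrases (ii) as a proof by contradiction rather than a chain of equivalences). Your direct argument that $\Sigma_3(p)=\varnothing$ under $(a)$ is a slightly cleaner observation than the paper's verification on the two explicit profiles, but it is not a different method.
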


\begin{proof}
$(i)$ We start showing that $(a)$ implies $(b)$. Assume that $p_i(1)\neq p_j(1)$ and $p_i(3)\neq p_j(3)$ for all $i,j\in H=\{1,2,3\}$ with $i\neq j$. Without loss of generality  we can assume that $p_1(1)=1, p_2(1)=2, p_3(1)=3$. Thus $p_1(3)\in \{2,3\}$. If $p_1(3)=2$, then, since the alternatives ranked third are distinct, we necessarily have  $p_2(3)=3$ and $p_3(3)=1$. That implies that
\[
p=\left[
\begin{array}{cccccc}
1&2&3\\
3&1&2\\
2&3&1\\
\end{array}
\right]
\]
Similarly,  if $p_1(3)=3,$ we get \[
p=\left[
\begin{array}{cccccc}
1&2&3\\
2&3&1\\
3&1&2\\
\end{array}
\right]
\]
In both cases we have that $\Gamma_2(p)$ is a $3$-cycle and the arc set of $\Gamma_3(p)$ is empty.

We next show that $(b)$ implies $(a).$ Let $p\in \mathcal{P}$. If there exists $x\in N$  which is ranked first in $p$ by at least two individuals, then $x$ is a maximum for $\Gamma_2(p)$ and so it cannot be involved in a cycle of $\Gamma_2(p)$. %Since there are only two alternatives left and, by Lemma \ref{maximum}, $\Gamma_2(p)$ is $2$-acyclic, we deduce that there is no cycle at all in $\Gamma_2(p)$.
 If there exists $x\in N$  which is ranked third in $p$ by at least two individuals, consider $p^r$. By what shown above,  $\Gamma_2(p^r)$ is acyclic, so that $\Gamma_2(p)$ is acyclic too.

 $(ii)$ By contradiction, assume that $\mu(p^r)\neq \mu(p),$ say $\mu(p^r)> \mu(p).$ Then $\mu(p)=2$ and $\mu(p^r)=3$. Thus $\Gamma_2(p^r)$ admits a cycle. By Lemma \ref{maximum}, we then get that  $\Gamma_2(p^r)$ is a $3$-cycle. Using $(i)$ we deduce that the alternatives ranked first as well as those ranked third in $p^r$ are distinct. But then, the same property holds for $p$, so that also $\Gamma_2(p)$ is a $3$-cycle. Thus $D_2(p)=\varnothing,$ against $\mu(p)=2$.
\end{proof}

\subsection{Proof of Proposition \ref{TM1}}\label{TM1-sec}

First of all, let us prove that if $(h,n)\in \mathbb{N}_\diamond^2\setminus T_1$, then there exist $p\in\mathcal{P}$ and $x\in N$
such that $D_{\mu(p)}(p)=D_{\mu(p^{r})}(p^{r})=\{x\}$.
We obtain the proof showing that the assumptions of Propositions \ref{blacktable-odd} or \ref{blacktable-even} hold true. First of all, note that  $(h,n)\in \mathbb{N}_\diamond^2\setminus T_1$ implies $h\geq 4$ and $n\geq 4.$
If $n=4$, then either $h$ is even with $h\geq 6$ and so satisfies $h\geq \frac{2(n-1)}{n-3}$,
 or $h$ is odd with $h\geq 9$ and so satisfies $h\geq \frac{3(n-1)}{n-3}$. If $n=5$, then the same argument applies.
If $n\geq 6,$ then we have $\frac{2(n-1)}{n-3}\leq 4\leq h$ for all $h$ even, as well as $\frac{3(n-1)}{n-3}\leq 5\leq h$ for all $h$ odd.

Assume now that  $(h,n)\in T_1$ and prove that it cannot be found $p\in\mathcal{P}$ and $x\in N$ such that $D_{\mu(p)}(p)=D_{\mu(p^{r})}(p^{r})=\{x\}$. Consider then $(h,n)\in T_1$ and assume, by contradiction, that there exist $p\in \mathcal{P}$ and $x\in N$  such that $ D_{\mu(p)}(p)=D_{\mu(p^{r})}(p^{r})=\{x\}.$ Since the Minimax {\sc scc} is neutral, we can assume that $x=n$ so that
\begin{equation}\label{D}
D_{\mu(p)}(p)=D_{\mu(p^{r})}(p^{r})=\{n\}.
\end{equation}
There are several cases to study.

If $n\in\{2,3\}$, then, by Corollary \ref{soglia-green}, we have that $\Gamma_{\mu(p)}(p)$ and  $\Gamma_{\mu(p^{r})}(p^{r})$ are both acyclic so that Lemma \ref{four} (iii) applies contradicting \eqref{D}.

If $h=2$, then $\mu(p)=\mu(p^r)=\mu_a=2$ and, by Corollary \ref{soglia-green}, we have that $\Gamma_{\mu(p)}(p)$ and  $\Gamma_{\mu(p^{r})}(p^{r})$ are both acyclic so that Lemma \ref{four} (iii) applies contradicting \eqref{D}.

If $h=3$, then $\mu_0=2$ and $\mu(p),\mu(p^{r})\in \{2,3\}$.
If $\mu(p)=2$, then, by Lemma \ref{minimalmu-ii}, we have that $\mu(p^{r})=3$ so that $D_{\mu(p^{r})}(p^{r})=\{n\}$ and $D_{\mu(p)}(p^{r})=\varnothing.$ Let $V=N\setminus \{n\}.$ Since $n$ is the only maximal element in $\Gamma_{\mu(p^{r})}(p^{r})$, for every $x\in V$, there exists $y\in N$ with $y>_{\mu(p^{r})}^{p^{r}}x$. Note that if $y$ were equal to $n$, then from $n>_{\mu(p^{r})}^{p^{r}}x$ we would get $x>_{\mu(p^{r})}^{p} n$ against the maximality of $n$ in  $\Gamma_{\mu(p)}(p)$. Thus,  there exists a cycle in $\Gamma_{\mu(p^{r})}(p^{r})$ involving some vertices of $V.$ That leads to a contradiction since, by Lemma \ref{ac}, $\Gamma_{\mu(p^{r})}(p^{r})$ is acyclic.
If $\mu(p^{r})=2$, then the previous argument applies to $p^{r}$. If $\mu(p)=\mu(p^{r})=3,$ then we reach a contradiction applying Lemma \ref{ac} and Corollary \ref{muacyclicnew}.

If $(h,n)=(4,4)$, then $\mu_0=\mu_a=3$ and  $\mu(p),\ \mu(p^{r})\in \{3,4\}$. Thus, by Corollary \ref{soglia-green}, $\Gamma_{\mu(p)}(p)$ and $\Gamma_{\mu(p^{r})}(p^{r})$ are both acyclic, so that Lemma \ref{four} (iii) applies contradicting \eqref{D}.

If $(h,n)=(5,4)$, then $\mu_0=3$, $\mu_{a}=\mu_{G}=4$, and $\mu(p),\ \mu(p^{r})\in\{3,4\}$.
If $\mu(p)=\mu(p^{r})=4$, then by Corollary \ref{soglia-green},
$\Gamma_{\mu(p)}(p)$ and $\Gamma_{\mu(p^{r})}(p^{r})$ are both acyclic and we contradict \eqref{D}, using Lemma \ref{four} (iii). If $\mu(p)=3=\mu_0$, then, by Lemma \ref{minimalmu-ii}, $\mu(p^{r})=4$. By Corollary \ref{soglia-green} we have that $\Gamma_{\mu(p^{r})}(p^{r})$ is acyclic and then, by Corollary \ref{cor-connected}, connected.
Assume there exists $x\in V=\{1,2,3\}$ such that $4>_{\mu(p^{r})}^{p^{r}} x$. Then $x>_{\mu(p^{r})}^p 4$, against $4\in D_{\mu(p)}(p)$. So, we have $4\not>_{\mu(p^{r})}^{p^{r}} x$, for all $x\in V$.
On the other hand, from $4\in D_{\mu(p^{r})}(p^{r})$, we deduce that $x\not>_{\mu(p^{r})}^{p^{r}} 4$. Thus,  $4$ is isolated in $\Gamma_{\mu(p^{r})}(p^{r})$, against the connection of $\Gamma_{\mu(p^{r})}(p^{r})$. If $\mu(p^r)=3=\mu_0$, then  the previous argument applies to $p^{r}$.

If $(h,n)=(7,4)$, then $\mu_{0}=4,$  $\mu_{a}=5$, $\mu_{G}=6$ and
 $\mu(p),\  \mu(p^{r})\in \{4,5,6\}.$
If $\mu(p),\ \mu(p^{r})\in \{5,6\}$, then by Corollary \ref{soglia-green},
$\Gamma_{\mu(p)}(p)$ and $\Gamma_{\mu(p^{r})}(p^{r})$ are both acyclic and we contradict \eqref{D}, using Lemma \ref{four} (iii).
If $\mu(p)=4$, then, by Lemma \ref{minimalmu-ii},  $\mu(p^{r})\in \{5,6\}.$ By Corollary \ref{soglia-green}, $\Gamma_{\mu(p^{r})}(p^{r})$ is acyclic and then, by Corollary \ref{cor-connected}, connected. Assume there exists $x\in V=\{1,2,3\}$ such that $4>_{\mu(p^{r})}^{p^{r}} x$. Then $x>_{\mu(p^{r})}^p 4$, against $4\in D_{\mu(p)}(p)$. So, we have
 $4\not >_{\mu(p^{r})}^{p^{r}} x$ for all $x\in V$.
 On the other hand, from $4\in D_{\mu(p^{r})}(p^{r})$ we deduce that $x\not>_{\mu(p^{r})}^{p^{r}} 4$ for all $x\in V.$  Thus, $4$ is isolated in $\Gamma_{\mu(p^{r})}(p^{r})$, against the connection of $\Gamma_{\mu(p^{r})}(p^{r})$. If $\mu(p^r)=4$, then  the previous argument applies to $p^{r}$.

If $(h,n)=(5,5)$, then $\mu_{0}=3,$ $\mu_a=4$, $\mu_{G}=5$ and $\mu(p),\  \mu(p^{r})\in \{3,4,5\}.$
If $\mu(p),\ \mu(p^{r})\in \{4,5\}$, then by Corollary \ref{soglia-green},
$\Gamma_{\mu(p)}(p)$ and $\Gamma_{\mu(p^{r})}(p^{r})$ are both acyclic and we contradict \eqref{D}, using Lemma \ref{four} (iii).
If $\mu(p)=3$, then, by Lemma \ref{minimalmu-ii}, $\mu(p^{r})\in\{4,5\}.$  By Corollary \ref{soglia-green}, $\Gamma_{\mu(p^{r})}(p^{r})$ is acyclic and then, by Corollary \ref{cor-connected}, connected.
Assume there exists $x\in V=\{1,2,3,4\}$ such that $5>_{\mu(p^{r})}^{p^{r}} x$. Then $x>_{\mu(p^{r})}^p 5$, against $5\in D_{\mu(p)}(p)$. On the other hand, from $5\in D_{\mu(p^{r})}(p^{r})$ we deduce that $x\not>_{\mu(p^{r})}^{p^{r}} 5$ for all $x\in V.$ Thus, $5$ is isolated in $\Gamma_{\mu(p^{r})}(p^{r})$, against the connection of $\Gamma_{\mu(p^{r})}(p^{r})$. If $\mu(p^r)=3$, then  the previous argument applies to $p^{r}$.

\subsection{Proof of Proposition \ref{TM2}}\label{TM2-sec}

First of all, let us prove that if $(h,n)\in \mathbb{N}_\diamond^2\setminus T_2$, then there exists $p\in\mathcal{P}$ such that $D_{\mu(p)}(p)=\{1\}\subseteq D_{\mu(p^{r})}(p^{r})$.
If $(h,n)\in\mathbb{N}_\diamond^2\setminus T_1$, then we can apply Proposition \ref{TM1}. Assume then that  $(h,n)\in T_1\setminus T_2$ and note that
\[
T_1\setminus T_2=\{(h,n)\in\mathbb{N}_\diamond^2: h=3,\,n\ge 4\}\cup\{(5,4), (5,5),(7,4)\}.
\]
If $(h,n)\in\mathbb{N}_\diamond^2$ is such that $h=3$ and $n\ge 4$, then consider $p\in\mathcal{P}$
defined by
\[
p_1=[1,(5),\ldots, (n),2,3,4]^T, \quad p_2=[1,(5),\ldots, (n),3,4,2]^T,\quad p_3=[4,2,3,(n),\ldots, (5),1]^T
\]
Thus, $\mu(p)=2$ and $D_{\mu(p)}(p)=\{1\}$, while $\mu(p^r)=3$ and $D_{\mu(p^r)}(p^r)=N.$

If $(h,n)=(5,4)$, then consider $p\in\mathcal{P}$ defined by
\[
\left[
\begin{array}{cccccccc}
1&1&1&2&3\\
2&3&4&3&4\\
3&4&2&4&2\\
4&2&3&1&1
\end{array}
\right]
\]
Thus, $\mu(p)=3$ and $D_{\mu(p)}(p)=\{1\}$, while $\mu(p^r)=4$ and $D_{\mu(p^r)}(p^r)=\{1,2,4\}.$

If $(h,n)=(5,5)$, then consider $p\in\mathcal{P}$ defined by
\[
\left[
\begin{array}{cccccccc}
1&1&1&5&2\\
2&3&4&2&3\\
3&4&5&3&4\\
4&5&2&4&5\\
5&2&3&1&1
\end{array}
\right]
\]
Thus, $\mu(p)=3$ and $D_{\mu(p)}(p)=\{1\}$, while $\mu(p^r)=4$ and $D_{\mu(p^r)}(p^r)=\{1,5\}.$

If $(h,n)=(7,4)$, then consider $p\in\mathcal{P}$ defined by
\[
\left[
\begin{array}{cccccccc}
1&1&1&1&3&4&2\\
2&3&4&2&4&2&3\\
3&4&2&3&2&3&4\\
4&2&3&4&1&1&1\\
\end{array}
\right]
\]
Thus,  $\mu(p)=4$ and $D_{\mu(p)}(p)=\{1\}$, while $\mu(p^r)=5$ and $D_{\mu(p^r)}(p^r)=\{1,2,4\}.$

Assume now that $(h,n)\in T_2$ and prove that it cannot be found $p\in\mathcal{P}$ and $x\in N$ such that $D_{\mu(p)}(p)=\{x\}\subseteq D_{\mu(p^{r})}(p^{r})$. By Lemmata  \ref{four}(i) and \ref{intersection3}(ii), it is enough to show that $\Gamma_{\mu(p)}(p)$ and $\Gamma_{\mu(p^r)}(p^r)$ are both acyclic. This comes applying Corollary \ref{soglia-green} in all the possible cases. The application is obvious when $n\in\{2,3\}$; if $h=2$ note that $\mu(p)=\mu(p^r)=\mu_a=2$; if $(h,n)=(4,4)$ note that $\mu_0=\mu_a=3$ and thus $\mu(p),  \mu(p^r)\geq 3.$

\subsection{Proof of Proposition \ref{TM3}}\label{TM3-sec}

First of all, let us prove that if $(h,n)\in \mathbb{N}_\diamond^2\setminus T_3$, then there exists $p\in\mathcal{P}$ such that $D_{\mu(p)}(p)\neq N$ and $D_{\mu(p)}(p)\cap D_{\mu(p^{r})}(p^{r})\neq \varnothing$.
If $(h,n)\in \mathbb{N}_\diamond^2\setminus T_2$ then we can apply Proposition \ref{TM2}. Assume then that  $(h,n)\in T_2\setminus T_3$ and note that
\[
T_2\setminus T_3=\{(h,n)\in\mathbb{N}_\diamond^2: h=2,\,n\ge 3\}\cup \{(h,n)\in\mathbb{N}_\diamond^2: h\neq 3,\, n=3\}\cup\{(4,4)\}.
\]
If $(h,n)\in \mathbb{N}_\diamond^2$ is such that $h=2$ and $n\ge 3$, then consider
$p\in\mathcal{P}$ defined by
\[
p_1=[1,2,3,\ldots,n-1,n]^T,\quad p_2=[n,1,2,\ldots,n-1]^T.
\]
Thus, $\mu(p)=\mu(p^r)=2$ and, since $n\ge 3$, we have $D_{\mu(p)}(p)=\{1,n\}\neq N$. Moreover
 $D_{\mu(p^r)}(p^r)=\{n-1,n\}$, so that $D_{\mu(p)}(p)\cap D_{\mu(p^{r})}(p^{r})=\{n\}.$

If $(h,n)\in \mathbb{N}_\diamond^2$ is such that $h\neq 3$ and $n=3$, then consider the partition of $\mathbb{N}_\diamond\setminus\{3\}$ given by $H_1=\{h=2+3k:k\geq 0\}$, $H_2=\{h=1+3k:k\geq 1\}$, $H_3=\{h=3+3k:k\geq 1\}$.
If $h\in H_1,$ then consider any $p\in\mathcal{P}$ such that
\[
\begin{array}{l}
|\{i\in H:p_i=[1,2,3]^T\}|=1+k,\quad |\{i\in H:p_i=[3,1,2]^T\}|=1+k,\\
\vspace{-2mm}\\
  |\{i\in H:p_i=[2,3,1]^T\}|=k.
\end{array}
\]
If $h\in H_2,$ consider any $p\in\mathcal{P}$ such that
\[
\begin{array}{l}
|\{i\in H:p_i=[1,2,3]^T\}|=k,\quad |\{i\in H:p_i=[2,3,1]^T\}|=k, \\
\vspace{-2mm}\\
|\{i\in H:p_i=[3,1,2]^T\}|=k,\quad  |\{i\in H:p_i=[1,3,2]^T\}|=1.
\end{array}
\]
If $h\in H_3,$ consider any $p\in\mathcal{P}$ such that
\[
\begin{array}{l}
|\{i\in H:p_i=[1,2,3]^T\}|=k,\quad  |\{i\in H:p_i=[3,1,2]^T\}|=k,\\
\vspace{-2mm}\\
|\{i\in H:p_i=[2,3,1]^T\}|=k+1,\quad  |\{i\in H:p_i=[1,3,2]^T\}|=2.
\end{array}
\]
In all the above situations, it is easily checked that $D_{\mu(p)}(p)=\{1,3\}\neq N$ and  $D_{\mu(p^r)}(p^r)=\{2,3\}$ so that $D_{\mu(p)}(p)\cap D_{\mu(p^r)}(p^r)=\{3\}\neq \varnothing$.

If $(h,n)=(4,4)$, then consider $p\in\mathcal{P}$ defined by
\[
\left[
\begin{array}{cccccccc}
1&1&4&4\\
2&2&2&2\\
3&3&3&3\\
4&4&1&1\\
\end{array}
\right]
\]
Thus $\mu(p)=\mu(p^r)=3$, $D_{\mu(p)}(p)=\{1,2,4\}\neq N$ and $D_{\mu(p^r)}(p^r)=\{1,3,4\}.$

Assume now that $(h,n)\in T_3$ and prove that it cannot be found $p\in\mathcal{P}$ such that $D_{\mu(p)}(p)\neq N$ and $D_{\mu(p)}(p)\cap D_{\mu(p^{r})}(p^{r})=\varnothing$.

If $n=2$, then the condition $D_{\mu(p)}(p)\neq N$ is equivalent to $|D_{\mu(p)}(p)|=1$. Since $(h,n)\in T_2$ Proposition \ref{TM2} applies.

Finally let $(h,n)=(3,3)$. We show that, for every $p\in \mathcal{P}$, we have $M(p)\cap M(p^r)=\varnothing$  or $M(p)=N$. Fix $p\in\mathcal{P}$ and  note that $\mu_0=2$.
Assume first that there exists $x\in N$ such that  $\{i\in H:p_i(1)=x\}$  has at least two elements. By Lemma \ref{lemma33} (i) and Lemma \ref{maximum}, $\Gamma_2(p)$  is acyclic. Thus $\mu(p)=2$ and $M(p)=D_2(p)=\{1\}.$ By Lemma \ref{lemma33} (ii) we also have $\mu(p^r)=2$. Since in $p^r$ the alternative $1$ is beaten by the alternative $2$ at least two times, we have that $1\notin D_2(p^r)=M(p^r)$ and so $M(p)\cap M(p^r)=\varnothing$.
If there exists $x\in N$ such that  $\{i\in H:p_i(3)=x\}$ has at least two elements we apply the argument above to $p^r$, obtaining again $M(p)\cap M(p^r)=\varnothing$.
We are then left with assuming that the alternatives ranked first as well as those ranked third are distinct in $p$. In this case, by Lemma \ref{lemma33} (i), $\Gamma_2(p)$ is a $3$-cycle and $\mu(p)=3$. Moreover, the arc set of $\Gamma_3(p)$ is empty so that $M(p)=D_3(p)=N$. \vspace{7mm}

\noindent {\Large{\bf References}}
\vspace{2mm}

\noindent Bubboloni, D., Gori, M., 2015. Symmetric majority rules. Mathematical Social Sciences 76, 73-86.
\vspace{2mm}

\noindent Diestel, R., 2010. {\it  Graph Theory}, 4th edition. Graduate Texts in Mathematics  173, Springer-Verlag, Heidelberg.
\vspace{2mm}

\noindent Fishburn, P.C., 1977. Condorcet social choice functions. SIAM Journal on Applied Mathematics 33, 469-489.
\vspace{2mm}

\noindent Greenberg, J., 1979. Consistent majority rules over compact sets of alternatives. Econometrica 47, 627-636.
\vspace{2mm}

\noindent Laslier, J.-F., 1997. {\it Tournament solutions and majority voting}. Studies in Economic Theory, Volume 7. Springer.
\vspace{2mm}

\noindent  Llamazares, B., Pe$\mathrm{\tilde{n}}$a, T., 2015. Scoring rules and social choice properties: some characterizations. Theory and Decision 78, 429-450.
\vspace{2mm}

\noindent  Miller, N.R., 1977. Graph-theoretical approaches to the theory of voting. American Journal of Political Science 21, 769-803.
\vspace{2mm}

\noindent Saari, D.G., 1994.  {\it Geometry of voting}.  Studies in Economic Theory, Volume 3. Springer.
\vspace{2mm}

\noindent Saari, D.G., Barney, S., 2003. Consequences of reversing preferences. The Mathematical Intelligencer 25, 17-31.

\end{document}